\documentclass[11pt, letterpaper]{amsart}

\oddsidemargin0.25in
\evensidemargin0.25in
\textwidth6.00in
\topmargin0.00in
\textheight8.50in

\setcounter{tocdepth}{1}

\usepackage{amssymb}
\usepackage{amsthm}
\usepackage{amsxtra}
\usepackage{graphicx}
\usepackage{color}
\usepackage{xcolor}
\usepackage{mathrsfs}

\newtheorem{theorem}{Theorem}

\newtheorem{lemma}[theorem]{Lemma}
\newtheorem{corollary}[theorem]{Corollary}

\theoremstyle{remark}
\newtheorem{remark}[theorem]{Remark}

\numberwithin{equation}{section}

\numberwithin{theorem}{section}

\numberwithin{table}{section}

\numberwithin{figure}{section}

\ifx\pdfoutput\undefined
  \DeclareGraphicsExtensions{.pstex, .eps}
\else
  \ifx\pdfoutput\relax
    \DeclareGraphicsExtensions{.pstex, .eps}
  \else
    \ifnum\pdfoutput>0
      \DeclareGraphicsExtensions{.pdf}
    \else
      \DeclareGraphicsExtensions{.pstex, .eps}
    \fi
  \fi
\fi

\title{Local Well-posedness for the Kinetic MMT Model}

\subjclass[2000]{82C99, 45G10}

\keywords{MMT model, kinetic wave equation, local well-posedness, nonlinear smoothing effect}

\begin{document}

\author[P. Germain]{Pierre Germain}
\address{Pierre Germain, Department of Mathematics, Huxley Building, South Kensington Campus,
Imperial College London, London SW7 2AZ, United Kingdom}
\email{pgermain@ic.ac.uk}

\author[J. La]{Joonhyun La}
\address{Joonhyun La, June E Huh Center for Mathematical Challenges, Korea Institute for Advanced Study, 85 Hoegiro Dongdaemun-gu, Seoul 02455, Republic of Korea }
\email{joonhyun@kias.re.kr}

\author[Z. Zhang]{Katherine Zhiyuan Zhang}
\address{Katherine Zhiyuan Zhang, Department of Mathematics, Northeastern University, 43 Leon St., Boston, MA 02115, United States of America}
\email{zhi.zhang@northeastern.edu}

\maketitle

\begin{abstract}
The MMT equation was proposed by Majda, McLaughlin and Tabak as a model to study wave turbulence. We focus on the kinetic equation associated to this Hamiltonian system, which is believed to give a way to predict turbulent spectra. We clarify the formulation of the problem, and we develop the local well-posedness theory for this equation. Our analysis uncovers a surprising nonlinear smoothing phenomenon.
\end{abstract}

\tableofcontents

\section{Introduction}

The MMT equation, which was proposed as a toy model in~\cite{MMT}, has become very popular in the Physics and Applied Mathematics communities as a numerical testing ground for the predictions of wave turbulence theory. This model has a number of advantages: it is analytically tractable, easy to simulate, and yet it displays a very rich range of phenomena.

The aim of this paper is to establish the foundations of the mathematical analysis of the associated kinetic wave equation, in particular the existence and uniqueness of solutions.

\subsection{The MMT model and its associated kinetic equation}

We consider the MMT model proposed by Majda, McLaughlin and Tabak~\cite{MMT}:
\begin{equation} \label{MMT}
i{\partial_t} \psi = |D|^\alpha \psi + \lambda  |D|^{\beta/4} \big( \big| |D|^{\beta/4} \psi \big|^2 |D|^{\beta/4} \psi  \big) , \ \lambda = \pm 1,
\end{equation}
where $\alpha \in (0,1), \beta \in \mathbb{R}$ and $\psi = \psi(x, t)$ is a complex-valued wave field (a word of caution for the reader: our convention for the sign of $\beta$ is the opposite of~\cite{MMT}, but agrees with~\cite{ZGPD}).

The corresponding kinetic wave equation is given (see~\cite{MMT,ZGPD}) by
\begin{equation}
\label{KWE}
\tag{KWE}
{\partial_t} n(t,k) = \mathcal{C}(n(t))(k),
\end{equation}
with the collision operator given by
\begin{equation}
\begin{split}
&  \mathcal{C}(n)(k) = \int_{\mathbb{R}^{3 }} |k_1 k_2 k_3 k|^{\beta/2} ( n_1 n_2 n_3 + n_1 n_2 n_k - n_1 n_3 n_k - n_2 n_3 n_k ) \\
& \qquad\qquad\qquad\qquad\qquad \qquad \delta (k_1+ k_2-k_3-k) \delta (\omega_1 +\omega_2-\omega_3-\omega)\,  dk_1\, dk_2 \,dk_3,
\end{split}
\end{equation}
where we denoted
\begin{align*}
& n_k = n(t,k), \quad n_i=n(t,k_i), \quad i=1,2,3, \\
& \omega = |k|^\alpha, \quad \omega_i = |k_i|^\alpha, \quad i=1,2,3.
\end{align*}

The dependence of this model on two parameters, $\alpha$ and $\beta$, makes it possible to explore different regimes of wave turbulence. Like all authors in this subject, we will restrict most of the discussion to $\alpha = \frac{1}{2}$, which simplifies greatly the algebra.

Having set $\alpha = \frac{1}{2}$, what are significant values of $\beta$? The case $\beta = 0$ is deemed by Zakharov and collaborators~\cite{ZDP, ZGPD} to be particularly interesting for two reasons: first, there is no "nonlinear frequency shift" occuring in the Hamiltonian system; second, the signs of the energy and mass fluxes for the Kolmogorov spectrum suggest (see the discussion there) that the wave turbulence predictions should be most easily realized for this model. Furthermore, several authors~\cite{RB,CDO} have focused on this case to run numerical experiments.

\subsection{Physics of wave turbulence in the MMT model}

Classical introductions to wave turbulence can be found in the textbooks~\cite{ZLF, Nazarenko,Galtier}.

The MMT model was introduced by Majda, McLaughlin and Tabak in~\cite{MMT}, with the intention of verifying the predictions of wave turbulence theory on an easily computable 1D model. In some regimes however, their observations seemed to contradicted the classical theory, with the appearance of a new, so-called "MMT spectrum", see also~\cite{CMMT1,CMMT2}.

This discrepancy with the standard theory was examined analytically and numerically by Zakharov and collaborators, who concluded to the influence of nonlinear rigid structures, such as solitary waves, on the spectrum~\cite{ZGPD,ZDP,PZ}. Other authors~\cite{RB,CDO} emphasized the presence of features associated to fully developed turbulence for sufficiently strong forcing.

Very recently, B\"uhler and Du~\cite{BuhlerDu} performed new numerical experiments, and reported that the exponent predicted by the theory of wave turbulence is observed for the nonlinear spectrum, provided that the inertial range is sufficiently broad.

\subsection{Mathematical approaches to kinetic wave turbulence} 
There was recently major progress on the derivation of the kinetic wave equation from Hamiltonian dynamics~\cite{BuckmasterGermainHaniShatah,CollotGermain1,CollotGermain2,DengHani1} culminating in the first rigorous proof on a non-trivial kinetic time scale~\cite{DengHani2,DengHani3,DengHani4,StaffilaniTran,HannaniRosenzweigStaffilaniTran}. 

These remarkable developments establish for the first time the validity of the kinetic wave equation, which had been a matter of debate amongst physicists. As a consequence, the kinetic wave equation does describe turbulent behavior (in the appropriate regime), and further progress on turbulent dynamics could follow from a deeper understanding of the kinetic wave equation.

As far as rigorous Mathematics are concerned, the theory of the kinetic wave equation (in its many variants) is still in its infancy. Local well-posedness could be proved in some cases~\cite{NguyenTran,GIT}, and global weak solutions have been constructed in other cases~\cite{EV,SofferTran}. Of particular interest are the Kolmogorov-Zakharov solutions, whose stability was considered in~\cite{EscobedoMischlerVelazquez1,EscobedoMischlerVelazquez2,CDG}.

In the present article, we focus on the kinetic MMT equation, with several motivations: first, this is a standard model in the Physics and Applied Mathematics community. Second, it is arguably the simplest model for wave turbulence: the geometry of resonances and of the resonant manifold is very simple since it is one-dimensional, and the homogeneity of the equation allows to use scaling. Finally, it is the first kinetic wave equation to be studied in dimension one, opening the door to further developments in this setting.

\subsection{Main results and plan of the article} Section~\ref{sectionfirst} is dedicated to the review of some basic properties of the equation, such as scaling and monotone quantities; all computations are formal in this section. 

\medskip

In Section~\ref{sectioncollision}, we turn to giving a rigorous meaning to the collision operator $\mathcal{C}$, and obtaining a convenient parameterization. As is customary in the literature, this is done under the simplifying assumptions that $\alpha=\frac{1}{2}$, and that the function $n$ is even in $k$, which allows one to think of it as a function of the dispersion $\omega$: $n = n(\omega)$. These assumptions are made henceforth. 
 The parameterization obtained by Zakharov-Guyenne-Pushkarev-Dias~\cite{ZGPD} is first derived rigorously in~\eqref{ZGPDparameterization}; it is then observed that this expression can be symmetrized to obtain~\eqref{collision_rep}.

\medskip

In Section~\ref{sectionweightedLp}, we investigate the boundedness of the collision operator on the most simple and natural class of functional spaces, namely weighted $L^p$ spaces given by the norm
$$
\| f(\omega) \|_{L^p_{\theta,\gamma}(0,\infty)} = \| m(\omega) f(\omega) \|{L^p(0,\infty)}, \quad \mbox{where $m(\omega) = |\omega|^{-\theta} \mathbf{1}_{\omega < 1} + |\omega|^{\gamma} \mathbf{1}_{\omega > 1}$}.
$$
This leads to the following theorem (which follows from Lemma~\ref{lemma1} and Lemma~\ref{lemma2}); as far as positive results go, it suffers from the limitation to the range $\beta \in [-\frac{1}{2},0]$.

 \begin{theorem} 
 \begin{itemize}
 \item[(i)] The operator $\mathcal{C}$ is bounded on $L^\infty_{\theta,\gamma}$ if $-1/2 \leq \beta \leq 0$, $\gamma > 2\beta + 2$ and $\theta > -2\beta -1$
 \item[(ii)] The operator $\mathcal{C}$ is not bounded on $L^p_{\theta,\gamma}$ if $p<3$ and $\gamma,\theta \in \mathbb{R}$.
 \end{itemize} 
 \end{theorem}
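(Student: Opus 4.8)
The overall plan is to reduce everything to the one-dimensional symmetrized representation~\eqref{collision_rep}, in which $\mathcal{C}(n)(\omega)$ is an integral over the resonant fibre above $\omega$ of an explicit power-type kernel (incorporating the Jacobian produced by resolving the two $\delta$-functions) times the symmetrized cubic expression $n_1n_2n_3+n_1n_2n_k-n_1n_3n_k-n_2n_3n_k$. Since $\mathcal{C}$ is a homogeneous cubic operator, for (i) it suffices to prove the pointwise trilinear bound $m(\omega)\,|\mathcal{C}(n)(\omega)|\lesssim\|n\|_{L^\infty_{\theta,\gamma}}^{3}$ uniformly in $\omega>0$, and for (ii) it suffices to produce, for each $p<3$ and each $\theta,\gamma$, a single $n\in L^p_{\theta,\gamma}$ with $\mathcal{C}(n)\notin L^p_{\theta,\gamma}$.

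For (i) I would bound each factor by the weight, $|n(\omega_i)|\le\|n\|_{L^\infty_{\theta,\gamma}}\,m(\omega_i)^{-1}$ with $m(\omega)^{-1}=\omega^{\theta}\mathbf{1}_{\omega<1}+\omega^{-\gamma}\mathbf{1}_{\omega>1}$, reducing matters to the scalar estimate
\[
\sup_{\omega>0}\ m(\omega)\int_{\text{fibre}(\omega)}|\text{kernel}|\;m(\omega_1)^{-1}m(\omega_2)^{-1}m(\omega_3)^{-1}\,d\ell\ \lesssim\ 1
\]
(with one weight factor $m(\omega_i)^{-1}$ replaced by $m(\omega)^{-1}$ for the three terms carrying $n_k$). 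Then I would parameterise the fibre by a single frequency, dyadically decompose in $\omega$ and in $\omega_1,\omega_2,\omega_3$, use the resonance relations $k_1+k_2=k_3+k$ and $|k_1|^{1/2}+|k_2|^{1/2}=|k_3|^{1/2}+|k|^{1/2}$ to see which dyadic tuples are compatible and estimate the kernel on each compatible region, and finally sum the resulting geometric series over dyadic scales. Convergence of these sums is exactly where the three hypotheses come from: $\gamma>2\beta+2$ from the high-frequency tails $\omega_i\to\infty$, $\theta>-2\beta-1$ from the region $\omega_i\to0$, and $-1/2\le\beta\le0$ from the bulk of the fibre. The main obstacle in (i) is the neighbourhood of the trivial resonances $\{k_1,k_2\}=\{k_3,k\}$: there the two resonance relations become tangent (the gradient of the frequency relation vanishes on the diagonal), so the Jacobian, hence the kernel, is singular; but the symmetrized cubic expression vanishes identically on the trivial resonant set, and the crux is to play this cancellation against the Jacobian singularity so that the product is still integrable, while simultaneously treating the borderline dyadic exponents (the source of the strictness of the inequalities on $\gamma$ and $\theta$).

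For (ii) I would take $n$ supported in a fixed compact subset of $(0,\infty)$ — so that $m$ is bounded above and below on the support and the weight is immaterial, which is what makes the obstruction uniform in $\theta,\gamma$ — carrying a power singularity $n(\omega)\sim|\omega-\omega_*|^{-a}$ with $a$ just below $1/p$, so that $n\in L^p_{\theta,\gamma}$. The point is to choose $\omega_*$ (together, if necessary, with companion singular points or a scale-family of them) at a configuration for which, along the resonant fibre, the three input frequencies $\omega_1,\omega_2,\omega_3$ all sweep through the singular set, so that the term $|k_1k_2k_3k|^{\beta/2}\,n(\omega_1)n(\omega_2)n(\omega_3)$ — the unique term in the symmetrized cubic expression free of the output factor $n_k$, hence not cancellable by the others at outputs away from $\omega_*$ — behaves like a product of three factors of order $-a$ along the one-dimensional fibre. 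Such a product fails to be locally integrable exactly when $3a\ge1$, a condition compatible with $a<1/p$ precisely when $p<3$; thus $\mathcal{C}(n)$ diverges and $\mathcal{C}(n)\notin L^p_{\theta,\gamma}$. The delicate point — and the reason the threshold is $p=3$ rather than the exponent $p=2$ that a mere two-fold coincidence of singularities would give — is to engineer the resonant geometry so that the three singular factors align on the fibre for a set of outputs $\omega$ of positive measure, not merely for a single $\omega$ (which would be harmless for $L^p$ membership).
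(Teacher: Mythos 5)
Your plan for part (i) is, in outline, the paper's proof of Lemma~\ref{lemma1}: bound each factor by $|n(\omega_i)|\le\|n\|_{L^\infty_{\theta,\gamma}}m(\omega_i)^{-1}$ and verify the scalar weighted estimate on the explicit one-dimensional parameterization \eqref{ZGPDparameterization} (the paper splits the $u$-integral where $u_i\omega$ crosses $1$ rather than dyadically, but that is the same power counting, and the three hypotheses come out of the conditions \eqref{condition1}--\eqref{condition8}). However, what you call the crux of (i) --- playing the vanishing of the symmetrized cubic against the Jacobian singularity near the trivial resonances --- is misplaced. The operator in the theorem is the one given by \eqref{ZGPDparameterization}/\eqref{collision_rep}, from which the trivial branches have already been removed in Section~\ref{sectioncollision}; on the remaining branches the two constraint surfaces intersect transversally, so there is no Jacobian singularity left, and indeed the paper's proof replaces all signs by $+$ and uses no cancellation whatsoever. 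Moreover, if you tried to estimate the trivial $(+,+,-)$ branch directly for data merely in $L^\infty_{\theta,\gamma}$, the cancellation you invoke is not quantitatively available: the argument that kills that branch (see \eqref{canceldom}) uses Lipschitz-type differences of $n$, a regularity that an $L^\infty_{\theta,\gamma}$ function does not possess, while the Jacobian factor there is non-integrable. So that step of your plan would fail as stated; it is avoided (by the definition/parameterization of $\mathcal{C}$), not solved by cancellation at the $L^\infty_{\theta,\gamma}$ level.

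For part (ii) there is a genuine gap. Your construction hinges on arranging that the three singular input factors align along the fibre for a set of outputs $\omega$ of positive measure, and you yourself flag this as the delicate point; but it cannot be done. For fixed singular points $a_1,a_2,a_3$, simultaneous alignment requires $u_1(u)\omega=a_1$, $u_2(u)\omega=a_2$, $u_3(u)\omega=a_3$, three equations in the two unknowns $(u,\omega)$, so triple coincidence occurs only at isolated $\omega$; for all other $\omega$ the three singularities sit at distinct values of $u$ and the product is locally integrable as soon as each exponent is $<1$, and divergence at isolated $\omega$ does not obstruct $L^p$ membership. The mechanism that actually produces the threshold $p=3$ in Lemma~\ref{lemma2} is different: by the combinatorial fact, the only configuration with three coordinates in $\{1/3,2/3,1\}$ is $(\omega_1,\omega_2,\omega_3,\omega)=(2/3,2/3,1/3,1)$, and at the corresponding parameter value one has $u_3'(1)=0$ while $u_1'(1)\neq 0$, i.e.\ the resonant curve is tangent to $\{\omega_3=\omega/3\}$. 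The paper exploits this degeneracy with anisotropic bumps (widths $\epsilon^2$ at $1/3$ and $1$, width $\epsilon$ at $2/3$, $L^p$-normalized heights): for $|\omega-1|\ll\epsilon^2$ the tangency keeps all three indicators equal to $1$ on a $u$-interval of length $\sim\epsilon$, so the dangerous terms are $\gtrsim\epsilon^{1-5/p}$ while the terms of opposite sign are only $O(\epsilon^{1-4/p})$, whence $\|\mathcal{C}(f^\epsilon)\|_{L^p}\gtrsim\epsilon^{1-3/p}\to\infty$ exactly for $p<3$. Without identifying this second-order tangency (or an equivalent degeneracy), a transversal triple coincidence of point singularities only yields unboundedness for $p<2$, so the key idea of (ii) is missing from your proposal; note also that one must check, as the paper does, that the two dominant terms carry the same sign and hence do not cancel.
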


\begin{corollary}
The equation~\eqref{KWE} is locally well posed in $L^\infty_{\theta,\gamma}$ if $-1/2 \leq \beta \leq 0$, $\gamma > 2\beta + 2$ and $\theta > -2\beta -1$.

More specifically, for data in $L^\infty_{\theta,\gamma}$, there exists $T>0$ and a unique solution in $\mathcal{C}([0,T],L^\infty_{\theta,\gamma})$ which depends continuously on the data. The equation $\partial_t n = \mathcal{C}(n)$ is understood in the sense of equality in $\mathcal{C}([0,T],L^\infty_{\theta,\gamma})$. \end{corollary}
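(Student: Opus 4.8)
My plan is to run the standard Picard/Banach fixed-point scheme, whose only real input is Theorem~(i) above; I would first upgrade that statement from a cubic bound to a trilinear one. Since $\mathcal{C}$ is a homogeneous cubic polynomial in $n$ (visible from~\eqref{collision_rep}), polarization produces a unique symmetric trilinear map $\widetilde{\mathcal{C}}$ with $\mathcal{C}(n)=\widetilde{\mathcal{C}}(n,n,n)$; applying the $L^\infty_{\theta,\gamma}$-bound of Lemma~\ref{lemma1} to linear combinations of $f,g,h$ and rescaling each slot (legitimate by trilinearity) gives, under the hypotheses $-1/2\le\beta\le0$, $\gamma>2\beta+2$, $\theta>-2\beta-1$,
\[
\bigl\| \widetilde{\mathcal{C}}(f,g,h) \bigr\|_{L^\infty_{\theta,\gamma}} \lesssim \|f\|_{L^\infty_{\theta,\gamma}}\,\|g\|_{L^\infty_{\theta,\gamma}}\,\|h\|_{L^\infty_{\theta,\gamma}}
\]
(equivalently, one re-runs the proof of Lemma~\ref{lemma1}, whose pointwise kernel estimates are applied factor by factor and never use that the three factors coincide). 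Expanding a difference of cubes as a telescoping sum of three such trilinear terms yields the local Lipschitz estimate
\[
\bigl\| \mathcal{C}(n) - \mathcal{C}(m) \bigr\|_{L^\infty_{\theta,\gamma}} \le C\, R^2\, \| n - m \|_{L^\infty_{\theta,\gamma}}, \qquad \|n\|_{L^\infty_{\theta,\gamma}},\ \|m\|_{L^\infty_{\theta,\gamma}}\le R,
\]
and in particular the continuity of $\mathcal{C}\colon L^\infty_{\theta,\gamma}\to L^\infty_{\theta,\gamma}$.

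Next I would pass to the integral formulation: a function $n\in\mathcal{C}([0,T],L^\infty_{\theta,\gamma})$ with $n(0)=n_0$ solves~\eqref{KWE} in the stated sense precisely when it is a fixed point of $\Phi(n)(t)\defeq n_0+\int_0^t\mathcal{C}(n(s))\,ds$, the integrand $s\mapsto\mathcal{C}(n(s))$ being a continuous, hence Riemann-integrable, $L^\infty_{\theta,\gamma}$-valued map by the previous step. Setting $R\defeq 2\|n_0\|_{L^\infty_{\theta,\gamma}}$ and letting $B$ be the closed ball of radius $R$ about the origin in $\mathcal{C}([0,T],L^\infty_{\theta,\gamma})$, the two estimates give $\|\Phi(n)(t)\|_{L^\infty_{\theta,\gamma}}\le \tfrac{R}{2}+TCR^3$ for $n\in B$ and $\|\Phi(n)-\Phi(m)\|_{\mathcal{C}([0,T],L^\infty_{\theta,\gamma})}\le TCR^2\,\|n-m\|_{\mathcal{C}([0,T],L^\infty_{\theta,\gamma})}$ for $n,m\in B$; choosing $T$ so small that $TCR^2\le\tfrac12$ makes $\Phi$ a contraction of $B$ into itself, and the Banach fixed-point theorem furnishes a solution $n\in B$.

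The remaining points — uniqueness in all of $\mathcal{C}([0,T],L^\infty_{\theta,\gamma})$ and continuous dependence on the datum — follow from the Lipschitz bound via Gronwall. For uniqueness, given two solutions $n,\tilde n$ with the same datum on $[0,T]$, set $R'\defeq\sup_{t\in[0,T]}\max(\|n(t)\|_{L^\infty_{\theta,\gamma}},\|\tilde n(t)\|_{L^\infty_{\theta,\gamma}})$ (finite by continuity on the compact interval); then $\|n(t)-\tilde n(t)\|_{L^\infty_{\theta,\gamma}}\le C(R')^2\int_0^t\|n(s)-\tilde n(s)\|_{L^\infty_{\theta,\gamma}}\,ds$ forces $n\equiv\tilde n$. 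For continuous dependence, the existence time above depends only on $\|n_0\|_{L^\infty_{\theta,\gamma}}$, so a convergent sequence of data $n_0^{(j)}\to n_0$ gives solutions on a common interval $[0,T]$, uniformly bounded there, and Gronwall yields $\sup_{[0,T]}\|n^{(j)}(t)-n(t)\|_{L^\infty_{\theta,\gamma}}\le e^{CT(R')^2}\|n_0^{(j)}-n_0\|_{L^\infty_{\theta,\gamma}}\to0$. Finally, since $s\mapsto\mathcal{C}(n(s))$ is continuous into $L^\infty_{\theta,\gamma}$, the fundamental theorem of calculus in Banach spaces applied to the integral equation shows $n\in\mathcal{C}^1([0,T],L^\infty_{\theta,\gamma})$ with $\partial_t n=\mathcal{C}(n)$, the equality holding in $\mathcal{C}([0,T],L^\infty_{\theta,\gamma})$ as claimed.

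I expect the only genuine obstacle to be the first step — verifying that the proof of Theorem~(i)/Lemma~\ref{lemma1} really delivers the trilinear (equivalently, locally Lipschitz) bound and not merely the cubic estimate on the diagonal. Once that is in hand, everything downstream is the routine contraction argument and is insensitive to the specific structure of $\mathcal{C}$ beyond its homogeneity and this local Lipschitz property.
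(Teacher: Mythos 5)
Your argument is correct and is essentially the (standard) contraction-mapping proof the paper intends, which it leaves implicit after Lemma~\ref{lemma1}. Your one stated worry is moot: Lemma~\ref{lemma1} is already formulated and proved as a trilinear bound on the sign-definite majorant $\mathcal{F}$, which pointwise dominates the natural trilinear extension of $\mathcal{C}$, so the Lipschitz estimate follows at once without any polarization.
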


To go further, we consider in Section~\ref{sectionscaling} smoother data, namely such that
$$
N(\omega) = \omega^{2\beta + \frac{3}{2}} n(\omega)
$$ 
belongs to 
$$
X = \{ N \in L^\infty ((0, \infty )), \, DN (\omega) := \omega N'(\omega) \in L^{2p_0} ((0, \infty  )) \cap L^\infty((0, \infty )) \}
$$
for $\beta$ and $p_0$ satisfying \eqref{betaanddata}. Note that the exponent $2\beta + \frac{3}{2}$ appearing in the definition of $N$ corresponds to the scaling of the equation, so that the functional framework is close to being scaling invariant, and as such, optimal.

We first state the a priori estimate.
\begin{theorem} \label{apriori}
Suppose that $p_0, \beta$, and $N$ satisfy the technical assumptions (A1)-(A4) (see Section \ref{sectionscaling}). Then there exists a time $T = T(p_0, \beta, N_0)$ and a constant $\mathbf{C} (p_0, \beta, N_0)$ such that
\begin{equation}
\| N \|_{L^\infty (0, T; X) } \le \mathbf{C}(p_0, \beta, N_0).
\end{equation}
Also, we have the gain of regularity \eqref{Gainsimplified1} and in particular, for $\beta \in (-3/4, 3/4)$, 
\begin{equation} \label{gainofregularityestimate}
\int_0 ^T [ DN (t) ]_\beta ^2 dt \le C(N_0, \beta, p_0 ) < \infty, 
\end{equation}
where
\begin{equation} \label{fractionalSobolevnorm}
\begin{split}
[ F ]_\beta ^2   &:= \int_0 ^\infty \int_0 ^\infty  \mathbf{1}_{[2/3, 3/2]} \left ( \frac{\omega'}{\omega } \right ) \frac{| F(\omega) - F(\omega') |^2}{|\omega' - \omega|^{\beta+1} } (\omega \omega')^{\beta/2} d\omega d\omega' \\
&+ \int_0 ^\infty \int_0 ^\infty  \mathbf{1}_{[1/2, 2]} \left ( \frac{\omega'}{\omega } \right ) \frac{| F(\omega) - F(\omega') |^2}{|\omega' - \omega|^{-2 \beta } } (\omega \omega')^{- (2\beta+1)/2 } d\omega d\omega' .
\end{split}
\end{equation}
\end{theorem}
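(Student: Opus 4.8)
The plan is to run an energy method on the equation rewritten in terms of $N$, the key point being that the collision operator is \emph{not} bounded on $X$ — its linearization has a kernel that is non-integrable along the resonant set, so it behaves like an operator of positive order — but it carries a hidden dissipative structure which is exactly what the seminorm \eqref{fractionalSobolevnorm} captures; this is in the spirit of energy estimates for the non-cutoff Boltzmann equation. First I would insert $N = \omega^{2\beta+3/2} n$ into \eqref{KWE} and use the symmetrized representation \eqref{collision_rep} to obtain $\partial_t N = \mathcal{Q}(N)$ for a cubic operator $\mathcal{Q}$ which, precisely because of the choice of the weight $\omega^{2\beta+3/2}$, is invariant under the scaling $N(\omega)\mapsto N(\mu\omega)$. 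Differentiating this scaling identity in $\mu$ at $\mu=1$ shows that $D=\omega\partial_\omega$ is almost a symmetry of $\mathcal{Q}$: one gets $D\mathcal{Q}(N) = \mathcal{Q}'(N)[DN] + \mathcal{R}(N)$, where $\mathcal{Q}'(N)$ is the (quadratic-in-$N$) linearization and $\mathcal{R}(N)$ is cubic in $N$ and of order zero in $DN$, so that $\partial_t(DN) = \mathcal{Q}'(N)[DN] + \mathcal{R}(N)$. The boundedness results of Section~\ref{sectionweightedLp} and their refinements then give $\|\mathcal{Q}(N)\|_{L^\infty} + \|\mathcal{R}(N)\|_{L^{2p_0}} + \|\mathcal{R}(N)\|_{L^\infty} \lesssim P(\|N\|_X)$ for a fixed polynomial $P$, which disposes of every contribution except the singular part of $\mathcal{Q}'(N)[DN]$.

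The heart of the argument, and the step I expect to be the main obstacle, is to pair $\partial_t(DN) = \mathcal{Q}'(N)[DN] + \mathcal{R}(N)$ with $|DN|^{2p_0-2}DN$ and extract coercivity from the first term. Using the parameterization of the resonant set from Section~\ref{sectioncollision}, one symmetrizes $\int_0^\infty \mathcal{Q}'(N)[h]\,|h|^{2p_0-2}h\,d\omega$ in the collision variables — exploiting the invariances $k_1\leftrightarrow k_2$, $k_3\leftrightarrow k$ and the algebraic form of $n_1 n_2 n_3 + n_1 n_2 n_k - n_1 n_3 n_k - n_2 n_3 n_k$ — and splits the kernel into a near-diagonal diffusive piece plus a genuinely lower-order remainder. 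The diffusive piece has a symmetrized bilinear form of definite sign, producing $-c\,[\,|h|^{p_0-1}h\,]_\beta^2$, where the two terms of \eqref{fractionalSobolevnorm}, with their different homogeneities in $|\omega-\omega'|$, arise from the two families of near-resonant interactions present in $\mathcal{C}$; the remainder is bounded by $C\,P(\|N\|_X)\,\|h\|_{L^{2p_0}}^{2p_0}$. The assumptions (A1)--(A4) enter here, in particular to keep the $N$-dependent coefficient multiplying the coercive term bounded below. The delicate points are: identifying the correct splitting of the kernel, pinning down the exact weights $(\omega\omega')^{\beta/2}$ and $(\omega\omega')^{-(2\beta+1)/2}$, and verifying that the remainder is truly of order zero, so that it can be fed into Gronwall rather than absorbed by the gain. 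A parallel but much easier pairing, with $h=N$, controls $\frac{d}{dt}\|N\|_{L^\infty}$, and for the $L^\infty$ norms of $N$ and $DN$ one uses instead $\|f(t)\|_{L^\infty}\le\|f(0)\|_{L^\infty}+\int_0^t\|\partial_t f(s)\|_{L^\infty}\,ds$ together with the bounds of the previous paragraph.

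Collecting everything, with $\mathcal{E}(t):=\|N(t)\|_{L^\infty}+\|DN(t)\|_{L^{2p_0}}+\|DN(t)\|_{L^\infty}$ one arrives at a closed inequality of the form $\frac{d}{dt}\mathcal{E}(t) + c\,\mathcal{D}[DN(t)] \le C\,P(\mathcal{E}(t))$, where the dissipation functional $\mathcal{D}[DN]$ dominates $[DN]_\beta^2$ precisely when $\beta\in(-3/4,3/4)$. A standard continuity/bootstrap argument then produces $T=T(p_0,\beta,N_0)$ and $\mathbf{C}(p_0,\beta,N_0)$ with $\mathcal{E}(t)\le\mathbf{C}(p_0,\beta,N_0)$ on $[0,T]$, which is the asserted bound; integrating the same inequality over $[0,T]$ yields the gain of regularity \eqref{Gainsimplified1}, and in particular \eqref{gainofregularityestimate}. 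For full rigor the whole scheme should first be carried out on a regularized (mollified and/or frequency-truncated) version of \eqref{KWE}, whose approximate solutions exist by ODE theory — this is also what legitimizes differentiating the $L^\infty$-type quantities and integrating by parts in the symmetrization — with all constants uniform in the regularization parameter, after which one passes to the limit.
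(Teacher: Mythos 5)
Your overall strategy coincides with the paper's: rewrite the equation for $N$ via the symmetrized representation \eqref{collision_rep}, use the fact that $D=\omega\partial_\omega$ commutes with the dilations $\omega\mapsto\omega v_i/v_k$ (so $D\overline{\mathcal{C}}(N)$ is the linearization applied to $DN$), pair with an odd power of $DN$, extract a signed near-diagonal quadratic form that yields the seminorm \eqref{fractionalSobolevnorm} (using the positivity of $N$, i.e.\ (A3), to bound the coefficient $N_{1,4}N_{2,4}$ from below), handle the remaining singular pieces by integration by parts --- in the paper this is an integration by parts in the collision parameter $u$, exploiting that $\frac{d}{du}\bigl[\tfrac{\lambda v_1+(1-\lambda)v_2}{v_k}\bigr]\sim u^{-2}$ for $k=3,4$, not a symmetrization in $\omega$ --- and close with Gr\"onwall on a regularized problem. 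So the architecture is right.

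There is, however, one genuine gap: your mechanism for controlling $\|DN\|_{L^\infty}$ does not work. You propose to run the $L^{2p_0}$ energy estimate only for the fixed exponent $p_0$ and to recover $\|DN(t)\|_{L^\infty}$ from $\|DN(0)\|_{L^\infty}+\int_0^t\|\partial_t DN(s)\|_{L^\infty}\,ds$ ``together with the bounds of the previous paragraph.'' But $\partial_t DN$ contains precisely the singular part of the linearization --- the terms with $DN_{1,k}-DN_{2,k}$ for $k=1,2$ and $DN_{3,k}-DN_{4,k}$ for $k=3,4$ --- which by your own account is \emph{not} bounded in $L^\infty$ by $P(\|N\|_X)$: its good sign is only visible after pairing against $(DN)^{2p-1}$, integrating in $\omega$, and changing variables, so there is no pointwise-in-$\omega$ bound for it. Since the right-hand side of the energy inequality genuinely needs $\|DN\|_{L^\infty}$ (the terms $I_3$, $I_6$ and the $L^\infty$ estimate for $N$ itself all carry a factor $\|DN\|_{L^\infty}$), the system does not close as written. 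The paper's resolution is to prove the $L^{2p}$ estimate for \emph{every} integer $p\ge p_0$ with constants uniform in $p$ (this is why the Minkowski/change-of-variables factors $(v_{k,i})^{1/2p}$ are tracked against $u^{-1/2p_0}$), and then let $p\to\infty$ to convert the family of $L^{2p}$ bounds into the $L^\infty$ bound on $DN$; with that uniform-in-$p$ step inserted, your argument matches the paper's proof.
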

Note that the seminorm \eqref{fractionalSobolevnorm} is reminiscent of the fractional Sobolev norm in its integral formulation\footnote{ Recall that $$
\| u \|_{H^s(\mathbb{R})}^2 = \| u \|_{L^2}^2 + \iint \frac{|u(x)-u(y)|^2}{|x-y|^{1+2s}}\,dx\,dy \qquad \mbox{if $s < \frac{1}{2}$},
$$ see for example, \cite{DINEZZA2012521}.}.

Next, we state the following local well-posedness theorem.
\begin{theorem} \label{construction}
For $p_0, \beta$ satisfying \eqref{betaanddata}, the Cauchy problem for \eqref{Neqn} with initial data $N_0 \in X$, $ \inf_{\omega \in (0, \infty) } N_0 (\omega) > 0$ admits the unique strong solution $N \in L^\infty (0, T; X)$ for a time $T = T(N_0, p_0, \beta) > 0$. Moreover, $\inf_{\omega \in (0, \infty ), t \in (0, T) } N(t, \omega) > 0$

{\color{black} The solution is either understood in the mild sense, or in the sense of pointwise equality $\partial_t N = \overline{\mathcal{C}}(N)$, which holds almost everywhere in $t$ for each $\omega >0$. Indeed, the field $N$ is continuous and almost everywhere differentiable.}
\end{theorem}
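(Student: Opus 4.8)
The plan is not to attempt a direct Picard iteration, since the collision operator $\overline{\mathcal{C}}$ loses a derivative: it maps the space $X$ only into a weighted $L^\infty$ space, not back into $X$, so a fixed point on $X$ does not close. Instead I would build the solution by a compactness argument resting on the a priori bounds of Theorem~\ref{apriori}, and then establish uniqueness and continuous dependence separately by a Gronwall estimate in a weaker norm.

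First I would introduce a family of regularized operators $\overline{\mathcal{C}}^{(m)}$, obtained for instance by truncating the collision kernel where it is singular or unbounded and confining the frequency variables to $\omega\in(1/m,m)$ with a reflecting cut-off, chosen so that: (i) $\overline{\mathcal{C}}^{(m)}$ is locally Lipschitz from the weighted $L^\infty$ space into itself with no derivative loss, so that the regularized Cauchy problem $\partial_t N^{(m)}=\overline{\mathcal{C}}^{(m)}(N^{(m)})$, $N^{(m)}(0)=N_0$, is solved by the Banach-space Picard--Lindel\"of theorem; (ii) the regularization preserves the algebraic gain/loss splitting and the near-diagonal cancellations used in the proof of Theorem~\ref{apriori}, so that that estimate applies verbatim and \emph{uniformly in $m$}, on a common interval $[0,T]$ with $T=T(p_0,\beta,N_0)$; and (iii) positivity propagates, which I would obtain by writing the equation pointwise in $\omega$ as $\partial_t N^{(m)}(t,\omega)=N^{(m)}(t,\omega)\,G^{(m)}(N^{(m)})(t,\omega)+(\text{nonnegative})$ with $|G^{(m)}|\lesssim C(\|N^{(m)}\|_X)$ uniformly in $\omega$ and $m$, so that Gronwall gives $N^{(m)}(t,\omega)\ge(\inf_\omega N_0)\,e^{-Ct}>0$. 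Finding a regularization that meets (i)--(iii) simultaneously is the delicate point; everything downstream is comparatively soft. Granting it, Theorem~\ref{apriori} together with (iii) yields, uniformly in $m$, bounds $\|N^{(m)}\|_{L^\infty(0,T;X)}\le\mathbf{C}$, the gain $\int_0^T[DN^{(m)}(t)]_\beta^2\,dt\le C$, and $\inf_{\omega,t}N^{(m)}\ge c_0>0$; and from the equation, $\partial_t N^{(m)}=\overline{\mathcal{C}}^{(m)}(N^{(m)})$ is bounded in $L^\infty_t$ of the weighted $L^\infty$ space into which $\overline{\mathcal{C}}$ maps $X$.

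Next I would pass to the limit. The uniform bound $\|N^{(m)}\|_{L^\infty_tX}\le\mathbf{C}$ gives $|N^{(m)}(t,\omega)-N^{(m)}(t,\omega')|\le\|DN^{(m)}\|_{L^\infty}\,|\log(\omega/\omega')|$, i.e.\ a uniform Lipschitz modulus of continuity in $\log\omega$, and the $L^\infty_t$ bound on $\partial_t N^{(m)}$ gives uniform equicontinuity in $t$; Arzel\`a--Ascoli, applied jointly in $t$ and $\omega$ on compact $\omega$-intervals with a diagonal extraction, then produces a subsequence with $N^{(m)}\to N$ in $C([0,T];L^\infty_\loc(0,\infty))$, with $N\in L^\infty(0,T;X)$ by weak-$*$ lower semicontinuity of the $X$-norm, $\inf_{\omega,t}N\ge c_0>0$, and $\int_0^T[DN(t)]_\beta^2\,dt<\infty$. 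Because $\overline{\mathcal{C}}$ is a sum of trilinear terms in $n$, the uniform $X$-bound provides an integrable dominating function for the (essentially one-dimensional, after using the resonance constraints) collision integral, so local uniform convergence of $N^{(m)}$ together with $\overline{\mathcal{C}}^{(m)}\to\overline{\mathcal{C}}$ on fixed functions lets one pass to the limit by dominated convergence in the mild formulation $N^{(m)}(t)=N_0+\int_0^t\overline{\mathcal{C}}^{(m)}(N^{(m)}(s))\,ds$, yielding $N(t)=N_0+\int_0^t\overline{\mathcal{C}}(N(s))\,ds$.

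Finally, uniqueness: given two solutions $N,\Xt\in L^\infty(0,T;X)$ with the same data and both bounded below, set $\delta=N-\Xt$ (here I just borrow $\Xt$ as a symbol for the second solution), expand $\overline{\mathcal{C}}(N)-\overline{\mathcal{C}}(\Xt)$ into trilinear differences each carrying a factor $\delta$, and estimate each term by the trilinear boundedness of $\overline{\mathcal{C}}$ with the lost derivative placed on a factor equal to $N$ or $\Xt$ (controlled by their uniform $X$-norms), aided by the gain \eqref{gainofregularityestimate} should some term force the derivative onto $\delta$; this gives $\frac{d}{dt}\|\delta(t)\|_Y\le A(t)\|\delta(t)\|_Y$ in the weaker norm $Y$ equal to the weighted $L^\infty$, with $A\in L^1(0,T)$ depending only on the $X$-norms and lower bounds of $N,\Xt$, whence $\delta\equiv0$ by Gronwall; the same estimate with nearby data gives continuous dependence. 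The regularity claims then follow from $\partial_t N=\overline{\mathcal{C}}(N)\in L^\infty(0,T;\text{weighted }L^\infty)$: $N$ is Lipschitz in $t$ into that space, hence continuous, and for each fixed $\omega$ the scalar function $t\mapsto N(t,\omega)$ is Lipschitz, hence differentiable for a.e.\ $t$ with $\partial_t N(t,\omega)=\overline{\mathcal{C}}(N(t))(\omega)$. I expect the main obstacle to be precisely the interplay between the choice of regularization (items (i)--(iii) above) and the passage to the limit — designing a regularization that is solvable yet faithful to the fragile cancellations behind Theorem~\ref{apriori}, so that all bounds are genuinely uniform — with the derivative-losing uniqueness estimate a secondary but still non-routine difficulty.
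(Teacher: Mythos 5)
Your overall skeleton is the paper's: regularize the collision operator, solve the regularized problems as ODEs, run the a priori estimates of Theorem~\ref{apriori} uniformly in the regularization, extract a limit by Arzel\`a--Ascoli on compact $\omega$-sets plus weak-$*$ compactness for $DN$, pass to the limit in the Duhamel formula, and read off a.e.\ differentiability; your positivity argument is also essentially the paper's. But the point you defer as ``the delicate point'' is resolved in the paper by a much simpler device than your $\omega\in(1/m,m)$ reflecting cut-off: one only truncates the $u$-integral to $[\varepsilon,1]$. Since the kernel singularity sits at $u=0$ and rescalings $\omega\mapsto\omega v_i/v_k$ preserve $L^{2p_0}$ and $L^\infty$ norms up to constants, the truncated operator $\overline{\mathcal{C}}_\varepsilon$ is bounded (and locally Lipschitz) on $X$ itself, so the approximate problem is an honest ODE in $X$ --- which you need, since the uniform estimates are estimates on $DN^{(m)}$ and cannot be run for a flow that is only well posed in weighted $L^\infty$; and all the cancellations survive verbatim, the only new feature being boundary terms at $u=\varepsilon$ in the integrations by parts, which are harmless. (The paper passes to the limit in the mild form via Vitali's theorem rather than dominated convergence, but given the uniform pointwise bound on the integrand this difference is cosmetic.)

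The genuine gap is your uniqueness step. In $\overline{\mathcal{C}}(N)-\overline{\mathcal{C}}(M)$, besides the terms where the gain/loss cancellation falls on $N$ or $M$ (those are indeed soft), there are terms of the form $v_2^{2\beta+3/2}\bigl((\delta N)_{1,k}-(\delta N)_{2,k}\bigr)N_{3,k}N_{4,k}$ and $v_4^{2\beta+3/2}\bigl((\delta N)_{3,k}-(\delta N)_{4,k}\bigr)N_{1,k}N_{2,k}$, where the near-diagonal difference \emph{must} fall on $\delta N$; there is no way to ``place the lost derivative on $N$ or $M$'' here. Bounding the difference by $2\|\delta N\|_{L^\infty}$ leaves the non-integrable kernel $W(u)\sim u^{-2\beta-1}$, and interpolating with $\|D\delta N\|_{L^\infty}\le \|DN\|_{L^\infty}+\|DM\|_{L^\infty}$ only yields a bound of the type $\frac{d}{dt}\|\delta N\|_{L^\infty}\lesssim \|\delta N\|_{L^\infty}^{\theta}$ with $\theta<1$, which does not imply $\delta N\equiv 0$. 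Nor can \eqref{gainofregularityestimate} be invoked: it controls $DN$, not differences of two solutions, and it is not known to hold for an arbitrary competitor in the uniqueness class $L^\infty(0,T;X)$ --- the paper explicitly does not establish it even for the constructed solution. The paper's uniqueness proof instead runs an $L^{2p}$ energy estimate: multiply the difference equation by $(\delta N)^{2p-1}$ and redo the $\dot W^{1,p}$ machinery with $DN$ replaced by $\delta N$, namely integration by parts in $u$ for the off-diagonal indices $k$ and the good-sign (``parabolic'') structure of the diagonal contributions, e.g.\ $\bigl[(\delta N)_{2,2}^{2p-1}-(\delta N)_{1,2}^{2p-1}\bigr]\bigl[(\delta N)_{2,2}-(\delta N)_{1,2}\bigr]\ge 0$, which an $L^\infty$-Gronwall argument cannot see. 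That sign structure, available only for the even-power functional, is the missing idea in your last step.
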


Theorem \ref{apriori} reveals an unexpected nonlinear smoothing mechanism which is tied to the structure of the collision operator. Such a phenomenon is already known to occur in some Boltzmann type equations (see the discussion below), but it came as a surprise in the context of 1D kinetic wave equation. We believe it might open interesting research directions.

On the other hand, we remark the gap between Theorem \ref{construction} and Theorem \ref{apriori}: Theorem \ref{construction} does not conclude the gain of regularity \eqref{Gainsimplified1} and \eqref{Gainsimplified2}. As one can see from the proof, the main reason is that we do not know how to obtain further regularity from these gains: more precisely, we only have weak convergence of derivatives of approximate solutions, not pointwise. To extract pointwise convergence, we need to employ regularity properties of the derivative, but it is not clear at this point how to do this. Development of regularity theory for the kinetic MMT equation would enable us to recover \eqref{Gainsimplified1} and \eqref{Gainsimplified2}.

\subsection{Perspectives}
\noindent \underline{Analogy with non-cutoff Boltzmann situation.} There is a very interesting analogy with the regularizing effect of Boltzmann equation with non-cutoff kernel (for example, \cite{Alexandre2010}, \cite{10.1215/21562261-1625154}, \cite{Gressman_2011} and also \cite{Jang2022} for relativistic Boltzmann.) In addition to the global well-posedness in smooth, small data regime, the gain of regularity has been shown in \cite{Gressman_2011}. The gain of regularity was measured in the following anisotropic norm:
\begin{equation} \label{noncutoffboltzmannnorm}
|f|_{N^{s, \gamma} }^2 := \int_{\mathbb{R}^n}\langle v \rangle ^{\gamma + 2s} |f(v) |^2 dv + \int_{\mathbb{R}^n} \int_{\mathbb{R}^n} ( \langle v \rangle \langle v' \rangle )^{\frac{\gamma + 2s + 1}{2}} \frac{| f(v) - f(v') |^2}{d(v, v')^{n+2s} } \mathbf{1}_{d(v, v') \le 1}  dv dv',
\end{equation}
where $\langle v \rangle = \sqrt{1+|v|^2}$ and $$d(v, v') = \sqrt{|v-v'|^2 + \frac{1}{4}(|v|^2 - |v'|^2 )^2 }$$
(see \cite{Gressman_2011} for further details.) They proved that if initial data is in a certain weighted Sobolev space (in $x, v$ variable), then not only solution propagates in that space, its derivatives gain the regularity measured by \eqref{noncutoffboltzmannnorm}. The norm in \eqref{noncutoffboltzmannnorm} is quite analogous to the norm \eqref{fractionalSobolevnorm} presented in the paper. Besides, in a series of works including \cite{Alexandre2010} and \cite{10.1215/21562261-1625154}, the authors showed that the solution in fact acquires $\mathcal{C}^\infty$-regularity. Therefore, it would be naturally interesting to ask if similar $\mathcal{C}^\infty$-smoothing effect could be shown for the kinetic wave equation as well.

\medskip

\noindent \underline{Smoothness of solutions.} Based on the regularizing effect presented above, it seems natural to conjecture that the solution to~\eqref{KWE} becomes positive and smooth for positive time - as long as it exists.

\medskip

\noindent \underline{Global solutions.} Obtaining global solutions for~\eqref{KWE} seems like a very challenging question. Indeed, the kernel of the collision operator is very singular, and this seems to preclude any possibility to make sense of it for locally $L^1$ fields, let alone prove local well-posedness at such a low regularity.

\subsection{Notations} For two quantities $A,B$, we write $A \lesssim B$ if there exists a constant $C>0$ such that $A \leq CB$. If $A \lesssim B$ and $B \lesssim A$, we denote $A \sim B$

\subsection*{Acknowledgements.} When preparing this article, Pierre Germain was supported by the Simons Foundation Collaboration on Wave Turbulence, a start up grant from Imperial College, and a Wolfson fellowship. Joonhyun La was supported by June Huh fellowship from KIAS. Katherine Zhiyuan Zhang was supported by the Simons Foundation Collaboration on Wave Turbulence and by an AMS-Simons travel grant. The authors thank Jin Woo Jang for fruitful discussions.

\section{First properties of the equation}

\label{sectionfirst}

In this section, we review some of the formal properties of the equation: scaling, monotone quantities, explicit solutions, and deduce the expected behavior of the equation for various ranges of parameters.

\subsection{Monotone quantities}

For any function $F(k)$, we get after symmetrizing that
\begin{align*}
\frac{d}{dt} \int F(k) n_k\,dk & = - \frac{1}{4} \int_{\mathbb{R}^{4}} |k_1 k_2 k_3 k|^{\beta/2} ( n_1 n_2 n_3 + n_1 n_2 n_k - n_1 n_3 n_k - n_2 n_3 n_k ) \\
& \qquad \delta (k_1+ k_2-k_3-k) \delta (\omega_1 +\omega_2-\omega_3-\omega)(F_1 + F_2 - F_3 - F)\,  dk_1\, dk_2 \,dk_3\,dk.
\end{align*}
Choosing $F(k) = 1,k,|k|^\alpha$, this gives the conservation of the mass, momentum, and energy
$$
\int n_k\,dk, \qquad \int k n_k\,dk, \qquad \int \omega_k n_k\,dk.
$$
Similarly, we get after symmetrizing that
\begin{align*}
 &\frac{d}{dt} \int F(n_k) \,dk = - \frac{1}{4} \int_{\mathbb{R}^{4}} |k_1 k_2 k_3 k|^{\beta/2} ( n_1 n_2 n_3 + n_1 n_2 n_k - n_1 n_3 n_k - n_2 n_3 n_k ) \\
& \qquad \delta (k_1+ k_2-k_3-k) \delta (\omega_1 +\omega_2-\omega_3-\omega)(F'(n_1) + F'(n_2) - F'(n_3) - F'(n_k))\,  dk_1\, dk_2 \,dk_3\,dk.
\end{align*}
Choosing $F(n) = \log n$, this gives the H-theorem
\begin{align*}
&\frac{d}{dt} \int \log n_k \,dk \geq 0.
\end{align*}

\subsection{Changing the independent variable from $k$ to $\omega$}

It is customary and convenient to adopt $\omega = \sqrt{k}$ as the independent variable. For notational simplicity in this new set of coordinates, we will assume from now on that \underline{$n(k)$ is even}, so that we can choose $n(\omega)$, $\omega>0$, as the unknown function. 

We will still denote $n$ and $\mathcal{C}$ for the unknown function, and the collision operator; it will be obvious from the context whether these are considered as functions of $k$ or $\omega$.

Up to time rescaling by a constant, the equation is now given by
$$
{\partial_t} n(t,\omega) = \mathcal{C}(n(t))(\omega),
$$
with the collision operator
\begin{equation}
\begin{split}
\mathcal{C}(n)(\omega) &  = \sum_{\epsilon_1,\epsilon_2,\epsilon_3 \in \{\pm 1\}}\int_{\mathbb{R}_+^{3 }} \omega^{\frac{\beta}{2\alpha}} (\omega_1 \omega_2 \omega_3)^{\frac{\beta}{2\alpha}-1+ \frac{1}{\alpha}} ( n_1 n_2 n_3 + n_1 n_2 n_\omega - n_1 n_3 n_\omega - n_2 n_3 n_\omega ) \\
& \qquad\qquad\qquad \delta (\epsilon_1 \omega_1^{1/\alpha} +\epsilon_2 \omega_2^{1/\alpha} + \epsilon_3 \omega_3^{1/\alpha} - \omega^{1/\alpha} ) \delta (\omega_1 +\omega_2-\omega_3-\omega)\,  d\omega_1\, d\omega_2 \,d\omega_3,
\end{split}
\end{equation}
where we denoted
\begin{align*}
& n_\omega = n(t,\omega), \quad n_i=n(t,\omega_i), \quad i=1,2,3.
\end{align*}
In these new coordinates, the conserved quantities become
$$
\int_{\mathbb{R}_+} n(\omega) \omega^{\frac{1}{\alpha}-1} \,d\omega, \qquad \int_{\mathbb{R}_+} n(\omega) \omega^{\frac{1}{\alpha}} \,d\omega
$$
(the momentum being identically zero by parity).

\subsection{Scaling considerations} 

\subsubsection{Scaling around the origin} 
A computation shows that the collision operator satisfies the scaling relation
$$
\mathcal{C}(n(\lambda \cdot))(\omega) = \lambda^{-2\frac{\beta}{\alpha} + 1 - \frac{2}{\alpha}} \mathcal{C} (n) (\lambda \omega).
$$
Thus, the following transformation leaves the set of solutions invariant
$$
n(t,\omega) \mapsto \lambda^{\frac{\beta}{\alpha} + \frac{1}{\alpha} - \frac{1}{2}} n(t,\lambda \omega)
$$
(there are also scaling transformations involving the time variable, but they are less relevant, and we ignore them here).

\subsubsection{Formation of a condensate} This scaling law suggests that the formation of a condensate at frequency 0 is only possible if $2\beta < \alpha$. 

The heuristic argument is as follows: suppose that, along a sequence of times $(t_n)$ converging to 1, $n(\omega)$ converges (locally) to a Dirac $\delta$ at the origin. If this happens in a self-similar way, there exists a profile $\psi$ such that $n(t_n,\omega) = \ell_n^{1/\alpha} \psi(\ell_n \omega)$, where $\ell_n \to \infty$. If the profile $\psi$ is, say, smooth, there exists a smooth solution $N$ on $[0,T]$, with $T>0$, of~\eqref{KWE} with data $\psi$. We now use the scaling law above to get a solution on $[t_n,t_n+T]$ for~\eqref{KWE} with data $\ell_n^{\frac{\beta}{\alpha} + \frac{1}{\alpha} - \frac{1}{2}} \psi(\ell_n \omega)$. 
As long as $\frac{1}{\alpha} < \frac{\beta}{\alpha} + \frac{1}{\alpha} - \frac{1}{2}$, which is equivalent to $2\beta > \alpha$, the smaller data $\ell_n^{1/\alpha} \psi(\ell_n \omega)$ should also have a smooth solution on $[t_n,t_n+T]$; but this contradicts the formation of a singularity at time $1$.

\subsection{Stationary solutions}
A family of stationary solutions is provided by the \textit{Rayleigh-Jeans} spectra
$$
\frac{1}{c_1 |k|^\alpha + c_2} = \frac{1}{c_1 \omega + c_2}, \qquad c_1,c_2 \geq 0.
$$

While these solutions correspond to equilibria, the \textit{Kolmogorov-Zakharov spectra} give (direct or inverse) cascades of mass or energy. They are the following power laws~\cite{MMT,ZGPD}
$$
\omega^{-\frac{2\beta + 3 - \alpha}{3\alpha}}, \qquad \omega^{-\frac{2\beta + 3}{3 \alpha}}.
$$

\section{The collision operator for $\alpha = \frac{1}{2}$}

\label{sectioncollision}

\textit{From now on, we focus on the case $\alpha=1/2$.} The main advantage of this particular case is that it becomes possible to find an explicit parameterization of the collision operator, which will simplify its treatment. The aim of this section is to derive this explicit parameterization; we will also explain carefully how to give a precise meaning to the expression giving the collision operator.

Recall that the collision operator is given by
\begin{equation}
\label{formulaC}
\begin{split}
\mathcal{C}(n)(\omega) &  = \sum_{\epsilon_1,\epsilon_2,\epsilon_3 \in \{\pm 1\}}\int_{\mathbb{R}_+^{3 }} \omega^{\beta} (\omega_1 \omega_2 \omega_3)^{\beta+1} ( n_1 n_2 n_3 + n_1 n_2 n_\omega - n_1 n_3 n_\omega - n_2 n_3 n_\omega ) \\
& \qquad\qquad\qquad \delta (\epsilon_1 \omega_1^{2} +\epsilon_2 \omega_2^{2} + \epsilon_3 \omega_3^{2} - \omega^{2} ) \delta (\omega_1 +\omega_2-\omega_3-\omega)\,  d\omega_1\, d\omega_2 \,d\omega_3,
\end{split}
\end{equation}

\subsection{Around the coarea formula}
The integral giving $\mathcal{C}$ features a product of $\delta$ functions; to show that it is well-defined, and interpret it appropriately, the coarea formula turns out to be the key tool. The discussion will be simplified by introducing some shorthands: denoting 
\begin{align*}
& F_1(\omega_1,\omega_2,\omega_3) = \epsilon_1 \omega_1^{2} +\epsilon_2 \omega_2^{2} + \epsilon_3 \omega_3^{2} - \omega^{2} \\
& F_2(\omega_1,\omega_2,\omega_3) = \omega_1 +\omega_2-\omega_3-\omega \\
&  \mathcal{I}(\omega_1,\omega_2,\omega_3) =  n_1 n_2 n_3 + n_1 n_2 n_\omega - n_1 n_3 n_\omega - n_2 n_3 n_\omega,
\end{align*}
we are dealing with a sum of terms of the type
\begin{equation}
\label{aterm}
\int \mathcal{I}(\omega_1,\omega_2,\omega_3) \delta(F_1(\omega_1,\omega_2,\omega_3)) \delta(F_2(\omega_1,\omega_2,\omega_3)) \,d\omega_1  \,d\omega_2  \,d\omega_3
\end{equation}
(thinking of $\omega$ as being fixed; the dependence of $F_2$ on $\epsilon_1,\epsilon_2,\epsilon_3$ is implicit).
Assuming that $n \in \mathcal{C}_0^\infty(0,\infty)$ for simplicity (even though a sufficiently fast decay at $0$ and $\infty$ for $\varphi$ would of course be sufficient), the integrand $\mathcal{I}$ is smooth and compactly supported. By the coarea formula\footnote{see the Encyclopedia of Mathematics, 
http:\/\/encyclopediaofmath.org\/index.php?title=Coarea\_formula{\&}oldid=29308
}, this allows to write the above as
\begin{equation}
\label{formulajacobian}
\eqref{aterm} = \int_{F_1^{-1}(\{0\}) \cap F_2^{-1}(\{0\})} \mathcal{I}(\omega_1,\omega_2,\omega_3) \frac{1}{|J(F_1,F_2)|} d \mathcal{H}^1,
\end{equation}
where the Jacobian of $(F_1,F_2)$ is given by
$$
J(F_1,F_2) = \left[ \det \begin{pmatrix} |\nabla F_1|^2 & \nabla F_1 \cdot \nabla F_2 \\  \nabla F_1 \cdot \nabla F_2 & |\nabla F_2|^2 \end{pmatrix} \right]^{\frac 12}
$$
the Jacobian of $(F_1,F_2)$ (which does not vanish), and $d \mathcal{H}^1$ is the measure induced on $F_1^{-1}(\{0\}) \cap F_2^{-1}(\{0\})$ by the Lebesgue measure in $\mathbb{R}^3$. This is well-defined as long as $\nabla F_1$ and $\nabla F_2$ do not align, or in other words as long as the manifolds $\{ F_1 = 0 \}$ and $\{ F_2 = 0 \}$ intersect transversally.

\medskip

The discussion above guarantees that the collision operator is well-defined; but to compute its parametrization, it is more convenient to first use $\delta(F_2)$ to replace $\omega_3$ by $\omega_1 + \omega_2 - \omega$:
$$
\eqref{aterm} = \int \widetilde{\mathcal{I}}(\omega_1,\omega_2) \delta(\widetilde{F_1}(\omega_1,\omega_2)) \,d\omega_1  \,d\omega_2.
$$
Here, the integration variables are $\omega_1,\omega_2$, and $\widetilde{\mathcal{I}}(\omega_1,\omega_2) = \mathcal{I}(\omega_1,\omega_2,\omega_1+\omega_2-\omega)$, similarly for $\widetilde{F_1}$. By the coarea formula, this becomes
\begin{equation}
\label{coarea2}
\eqref{aterm} = \int_{\widetilde{F_1}^{-1}(\{ 0 \})} \widetilde{\mathcal{I}}(\omega_1,\omega_2) \frac{1}{|\nabla \widetilde{F_1}|} d \mathcal{H}^1.
\end{equation}

Suppose now that $\varphi(u) = (\omega_1 (u), \omega_2(u))$, $u \in [a,b]$ is a parametrization of $\widetilde{F_1}^{-1} (0)$. Then we claim that
\begin{equation}
\label{simplecoarea}
\int_{\widetilde{F_1}^{-1} (0) } \widetilde{\mathcal{I}} (\omega_1, \omega_2) \frac{d \mathcal{H}^1} {| \nabla \widetilde{F_1} |} = \int_a ^b \widetilde{\mathcal{I}}(u) \frac{|\omega_1' (u) |}{|\partial_{\omega_2} \widetilde{F_1} |} du = \int_a ^b F(u) \frac{|\omega_2' (u) |}{|\partial_{\omega_1} \widetilde{F_1} |} du.
\end{equation}

We first note that $\frac{d \mathcal{H}^1} {| \nabla \widetilde{F_1} |} = \frac{ |\varphi'(u)|}{{| \nabla \widetilde{F_1} |} } du$. However, $\frac{d}{du} \widetilde{F_1} ( \omega_1(u), \omega_2 (u) ) = \varphi' \cdot \nabla \widetilde{F_1} = 0$ since $\varphi $ is embedded in the level set of $\widetilde{F_1}$. Therefore, $\nabla \widetilde{F_1}$ is parallel to $(\varphi') ^{\perp} = (-\omega_2 ', \omega_1 ')$, and in particular, 
$$
\frac{|\varphi'(u) |}{| \nabla \widetilde{F_1}  |} = \frac{ |\varphi'(u)|^2}{ | (\varphi')^{\perp} \cdot \nabla \widetilde{F_1} | }.
$$
However, from $ \varphi' \cdot \nabla \widetilde{F_1} = 0$ we have $(\varphi')^{\perp} \cdot \nabla \widetilde{F_1}=\frac{ |\varphi'(u)|^2 }{\omega_1'(u)} \partial_{\omega_2} \widetilde{F_1}$, and the result follows.

\subsection{Trivial resonances} 

First, we claim that the above expression vanishes in either of the following cases
\begin{align*}
(\epsilon_1 , \epsilon_2, \epsilon_3) = (-,-,-), \; (+,-,+), \; (-,+,+), \; (+,+,-). 
\end{align*}
This follows from examining the resonant manifold
\begin{equation}
\label{RM}
\epsilon_1 \omega_1^{2} +\epsilon_2 \omega_2^{2} + \epsilon_3 \omega_3^{2} - \omega^{2} = \omega_1 +\omega_2-\omega_3-\omega = 0, \qquad \omega_1,\omega_2,\omega_3,\omega \geq 0.
\end{equation}
Indeed:
\begin{enumerate}
\item If $(\epsilon_1 , \epsilon_2, \epsilon_3) = (-,-,-)$, then $\omega_1^2 + \omega_2^2 + \omega_3^2 + \omega^2 = 0$ only has a trivial solution.
\item If $(\epsilon_1, \epsilon_2, \epsilon_3) = (+,-,+)$, then~\eqref{RM} implies that $(\omega_1,\omega_2) = (\omega,\omega_3)$, in which case the integrand of~\eqref{formulaC} vanishes.
\item The case $(\epsilon_1, \epsilon_2, \epsilon_3) = (-,+,+)$ is symmetrical.
\item If $(\epsilon_1,\epsilon_2,\epsilon_3) = (+,+,-)$, then~\eqref{RM} implies that $\{ \omega_1,\omega_2 \} = \{ \omega,\omega_3 \}$, and once again the integrand of~\eqref{formulaC} vanishes.
\end{enumerate}

This discussion was mostly formal; we now resort to~\eqref{formulajacobian} to give it a more rigorous framework. In the case (2), it is easy to see that the gradients of $F_1(\omega_1,\omega_2,\omega_3)$ and $F_2(\omega_1,\omega_2,\omega_3) = \omega_1^2 - \omega_2^2 + \omega_3^2 - \omega^2$ are never colinear on the resonant manifold where $(\omega_1,\omega_2) = (\omega_3,\omega_4)$. The above conclusion is therefore verified.

However, in the case (4), the gradients of $F_1(\omega_1,\omega_2,\omega_3)$ and $F_2(\omega_1,\omega_2,\omega_3) = \omega_1^2 + \omega_2^2 - \omega_3^2 - \omega^2$ are colinear at the point $\omega_1 = \omega_2 = \omega_3 = 
\omega_4$, which belongs to the resonant manifold. Furthermore, the singularity caused by the Jacobian in the denominator in~\eqref{formulajacobian} is not integrable; even though the integrand is zero, this term appears ill-defined. To settle matters, we will use a regularization procedure, and show that it has zero limit.

More precisely, we will interpret $\delta(\omega_1 + \omega_2 - \omega_3 - \omega_4)$ as
$$
\delta(\omega_1 + \omega_2 - \omega_3 - \omega_4) = \lim_{\delta\rightarrow 0^+} \frac{1}{2\delta} \mathbf{1}_{(-\delta, \delta) } (\omega_1 + \omega_2 - \omega_3 - \omega_4).
$$
We will furthermore denote
$$
\omega_1 + \omega_2 - \omega_3 - \omega_4 = t
$$
and, under this constraint,
$$
\Phi_t = \omega_1^2 + \epsilon_2 \omega_2^2 - \epsilon_3 \omega_3^2 - \omega^2
$$
 is a function of two of $(\omega_1, \omega_2, \omega_3)$. By the coarea formula, we are thus led to investigate the limit as $\delta$ goes to zero of
\begin{equation*}
 \lim_{\delta \rightarrow 0^+} \frac{1}{2\delta}  \int_{-\delta} ^{\delta} \int_{\Phi_t ^{-1} (0) }(\omega_1 \omega_2 \omega_3)^{\beta+1} \left[n_1 n_2 n_3 + n_1 n_2 n_\omega - n_1 n_3 n_\omega - n_2 n_3 n_\omega \right] \frac{d \mathcal{H}^1} {| \nabla_{\omega_i} \Phi_{t} | } dt,
\end{equation*}
assuming for simplicity that $n \in \mathcal{C}_0^\infty$, where $d \mathcal{H}^1$ is the measure for the curve $ \{ \Phi_t = 0 \} \cap \{ \sum_{i=1} ^4 (-1)^i \omega_i = t \}$ and $\nabla_{\omega_i} \Phi_t$ is the derivative of $\Phi_t$ with respect to the two variables on which it depends. 

The following formula will prove useful: if $(\omega_1(u),\omega_2(u))$ is a parameterization of $\Phi_t^{-1}(\{0\})$, then
$$
\int_{\Phi_{t} ^{-1} (0) } F (\omega_1, \omega_2) \frac{d \mathcal{H}^1} {| \nabla_{\omega_1, \omega_2} \Phi_{t}  |} = \int_a ^b F(u) \frac{|\omega_1' (u) |}{|\partial_{\omega_2} \Phi_{t} |} du = \int_a ^b F(u) \frac{|\omega_2' (u) |}{|\partial_{\omega_1} \Phi_{t} |} du;
$$
it can be proved like~\eqref{simplecoarea}.
Observe that
\begin{equation}
|\nabla_{\omega_2} \Phi_t |  = 2 |\omega_2 - \omega_3|.
\end{equation}
We have the following:
\begin{equation}
\begin{split}
|\omega_2 - \omega_3| &= |\omega_1 - \omega - t|, \\
|\omega_2 ^2 - \omega_3 ^2| &= |\omega_1 ^2 - \omega^2|,
\end{split}
\end{equation}
which implies the following:
\begin{equation}
|\omega_1 - \omega - t | = \frac{\omega_1 + \omega}{\omega_2 + \omega_3} |\omega_1 - \omega|. 
\end{equation}
An important consequence is that since $n$ is compactly supported, $|\omega_1 - \omega - t|$ cannot be too small: if $|\omega_1 - \omega - t| < ct$ for a sufficiently small $c<1$, $|\omega_1 - \omega| > C' t$ for some $C'>1/2$, so $\omega_2 + \omega_3 = |\omega_1 - \omega| (\omega_1 + \omega) / |\omega_1 - \omega - t| > C''\omega$: so if $c<1$ is sufficiently small, both $\omega_2$ and $\omega_3$ will lie outside the support of $n$ (since $|\omega_2 - \omega_3| = |\omega_1 - \omega - t|$ small), and thus $\sum_{j=1} ^4 (-1)^j \frac{\prod_{i=1} ^4 n_i} {n_j} = 0$ there. Furthermore, $|\omega_1 - \omega|$ cannot be too large as well: we have 
\begin{equation}
(\omega_1 - \omega) =
\begin{cases}
\frac{(\omega_2 + t)^2 - \omega_2 ^2}{2 (\omega+ \omega_2 + t) }, \text{ case }\qquad (+, +, -), \\
\frac{\omega_2^2 - (\omega_2 + t)^2}{2 (\omega_1 - \omega_2 - t)}, \text{ case } \qquad (+, -, +),
\end{cases}
\end{equation}
from the following calculation:\begin{equation}
\begin{split}
0 & = \omega_1^2 + \omega_2^2 - \omega_3^2 - \omega^2\\
&= (\omega_1 - \omega) (\omega_1 + \omega) - (\omega_1 -\omega - t) (2 \omega_3 + \omega + t - \omega_1), \\
- (2 \omega_2 t + t^2) &= (\omega_1 - \omega )\big [ 2 \omega_1 - 2 (\omega_3 + t)  \big ]
\end{split}
\end{equation}
which gives the result.
If $|\omega_1 - \omega_2 - t| \ge |\omega_3 - \omega_2 - t|$, since $\omega_2 - \omega_3 - t = - \omega - \omega_1$, we have
\begin{equation}
|\omega_1- \omega| \le \frac{ C |t| }{|\omega_1 - \omega|},
\end{equation}
that is, $|\omega_1 - \omega| \le C \sqrt{|t|}$. In both cases, for a sufficiently small $t$, we have $|\omega_1 - \omega - t| \le C \sqrt{|t|}$. Therefore, if $(\omega_1, \omega_2, \omega_3) \in \Phi_t^{-1} (0)$, then only those $\omega_1$ such that
\begin{equation} \label{boundomegaomega1} c |t| < |\omega_1 - \omega - t| < C \sqrt{|t|} \end{equation}
yield nonzero integrands. On the other hand, we have
\begin{equation} \label{canceldom}
\left | n_1 n_2 n_3 + n_1 n_2 n_\omega - n_1 n_3 n_\omega - n_2 n_3 n_\omega \right | \le n_1 n_4 |n_2 - n_3| + n_2 n_3 |n_1 - n_4| \le C( |\omega_1 - \omega - t| + |t|).
\end{equation}
Finally, we remark that from the upper bound in \eqref{boundomegaomega1}, both $\omega$ and $\omega_1$ are in $[c^{-1}, c]$ for some $c>1$ for $(\omega_1, \omega_2, \omega_3) \in \Phi_t^{-1} (0)$ for small $t$. This implies that $|\omega_3 - \omega_2 | = |\omega_1 - \omega - t| \in [0, C]$ for some $C>0$, and therefore either $\omega_3, \omega_2 \in [0, C]$ for some $C>0$ or $\omega_3, \omega_2$ is greater than the upper bound for $\text{supp }n$.

Summarizing the above, we see that if we parametrize $\Phi_t^{-1} (0)$ with $\omega_1$, we have
\begin{equation}
\begin{split}
&\left |\int_{\Phi_t ^{-1} (0) }(\omega_1 \omega_2 \omega_3)^{\beta+1} \left[n_1 n_2 n_3 + n_1 n_2 n_\omega - n_1 n_3 n_\omega - n_2 n_3 n_\omega \right] \frac{d \mathcal{H}^1} {| \nabla_{\omega_i} \Phi_{t} | } \right | \\
& \qquad \lesssim \int_{c|t| < |\omega_1 - \omega - t| < C\sqrt{|t|} } \frac{|\omega_1 - \omega - t| + |t| }{|\omega_1 - \omega - t| } d\omega_1 < C \sqrt{|t|},
\end{split}
\end{equation}
which gives the desired result after integrating over $[-\delta,\delta]$ and letting $\delta \to 0$.

\subsection{Non-trivial resonances: the Zakharov-Guyenne-Pushkarev-Dias parameterization} 
In the previous subsection, we learned that trivial resonances can be eliminated from the sum defining $\mathcal{C}$, which becomes therefore
\begin{equation} \label{MMT-kwe}
\begin{split}
 \partial_t n_\omega & = \int_{\mathbb{R}_+^{3 }}\omega^{\beta} (\omega_1 \omega_2 \omega_3)^{\beta+ 1}  ( n_1 n_2 n_3 + n_1 n_2 n_\omega - n_1 n_3 n_\omega - n_2 n_3 n_\omega ) \\
& \qquad \cdot \delta (\omega_1 +\omega_2-\omega_3-\omega) 
[ \delta (\omega_1^{2} + \omega_2^{2} +\omega_3^{2}-\omega^{2}) + \delta (\omega_1^{2} + \omega_2^{2} -\omega_3^{2} + \omega^{2}) \\
& \qquad\qquad + \delta (\omega_1^{2} - \omega_2^{2} + \omega_3^{2} +\omega^{2}) + \delta (- \omega_1^{2} + \omega_2^{2} + \omega_3^{2} + \omega^{2}) ] \, d\omega_1 \, d\omega_2 \,d\omega_3  \\
& = \mathcal{C}_1 (n, n, n) + \mathcal{C}_2 (n, n, n) + \mathcal{C}_3 (n, n, n) + \mathcal{C}_4 (n, n, n) .  \\
\end{split}
\end{equation}
In the present subsection, we turn to obtaining a parameterization for the collision operator: we will show that
\begin{equation}
\label{ZGPDparameterization}
\mathcal{C} = \mathcal{S}_1 + \mathcal{S}_2 + \mathcal{S}_3 + \mathcal{S}_4
\end{equation}
where
\begin{align*}
\mathcal{S}_1(n)(\omega) & = \omega^{4\beta + 3}  \int_0^1 \frac{u^{2\beta +2} (u+1)^{2\beta+2}}{(1+u+u^2)^{3\beta+4}}  ( f_1 f_2 f_3 + f_1 f_2 f_\omega - f_1 f_\omega f_3 - f_\omega f_2 f_3  )\, du \\
\mathcal{S}_2(n)(\omega) & = \omega^{4\beta + 3}  \int_0^1  \frac{(1+u+u^2)^{\beta+1} (u+1)^{2\beta+2}}{u^{2\beta +3} }  ( f_1 f_2 f_3 + f_1 f_2 f_\omega - f_1 f_\omega f_3 - f_\omega f_2 f_3  )\, du \\
\mathcal{S}_{3} (f)(\omega) &  =    \omega^{4\beta+3}  \int_0^1 \frac{(1+u+u^2)^{\beta+1}u^{2\beta+2}}{(u+1)^{2\beta+3}}  ( f_1 f_2 f_3 + f_1 f_2 f_\omega - f_1 f_\omega f_3 - f_\omega f_2 f_3  ) \, du \\
\mathcal{S}_{4} (f)(\omega) & =  \omega^{4\beta+3}  \int_0^1 \frac{(1+u+u^2)^{\beta+1}}{u^{2\beta+3} (u+1)^{2\beta+3}}  ( f_1 f_2 f_3 + f_1 f_2 f_\omega - f_1 f_\omega f_3 - f_\omega f_2 f_3  ) \, du.  
\end{align*}

For each of the above integrals, the notation $f_i$ stands for $f(u_i \omega)$, where the $u_i$ are given as follows
\begin{align*} 
& \mbox{for $\mathcal{S}_1$}:  \quad u_1 = \frac{1+u}{1+u+u^2},  \quad  u_2 = \frac{u(1+u)}{1+u+u^2}, \quad  u_3 = \frac{u}{1+u+u^2} \\
& \mbox{for $\mathcal{S}_2$}: \quad u_1 = u+1 , \quad   u_2 = \frac{u+1}{u} , \quad  u_3 = \frac{1+u+u^2}{u}\\
& \mbox{for $\mathcal{S}_3$}: \quad u_1 = \frac{1+u+u^2}{u+1} , \quad u_2 = \frac{u}{u+1}  , \quad  u_3 =  u  \\
& \mbox{for $\mathcal{S}_4$}: \quad u_1 = \frac{1+u+u^2}{(u+1)u}, \quad u_2 = \frac{1}{u+1}    , \quad u_3 = \frac{1}{u}   . 
\end{align*}

\begin{remark}
Note that the some misprints occured in the formulas in \cite{ZDP} and \cite{ZGPD}: In $\mathcal{S}_{1}$, a factor of $\frac{1}{2}(u+1)$ is missing. In $\mathcal{S}_{2}$, a factor of $(u+1)$ is missing. In the formula for $\mathcal{S}_{4}$ in \cite{ZGPD}, there is a mistake in the second term in the integral and a factor of $\frac{1}{u+1}$ is missing in the front. In \cite{ZDP}, this factor of $\frac{1}{u+1}$ is also missing in the formula for $\mathcal{S}_{4}$.
\end{remark}

Before delving into the computations which will yield the above formulas, notice that in all four cases making up nontrivial resonances, namely
$$
(\epsilon_1,\epsilon_2,\epsilon_3) = (+,+,+), \;(-,-,+), \; (-,+,-), \;(+,-,-),
$$
the gradients of $F_1$ and $F_2$ are never aligned (this follows from a simple computation relying on the parametrization of the resonant manifold provided in the following subsections). Therefore, the corresponding integral is well-defined through~\eqref{formulajacobian}; but we will rather rely on~\eqref{simplecoarea} to derive explicit expressions.

\subsubsection{The case $(+,+,+)$}
\label{SSS:parametrize+++}

The first step is to consider the equations
\begin{equation}
\label{eq+++}
\left\{
\begin{array}{l}
u_3+1 = u_1 + u_2 \\
u_1^2 + u_2^2 + u_3^2 = 1.
\end{array}
\right.
\qquad u_1,u_2,u_3 \geq 0, \qquad u_2 \leq u_1.
\end{equation}
The solutions in $\mathbb{R}_+^3$ of this system can be parameterized as follows\footnote{To check this statement, first consider the system $\begin{cases} u_1 + u_2 =a \\ u_1^2 + u_2^2 = b \end{cases}$. It has solutions in $\mathbb{R}_+^2$ if and only if $a \geq 0$ and $2b \geq a^2 \geq b$. For us, $a = 1 + u_3$, $b = 1-u_3^2$, and the conditions $a \geq 0$, $2b \geq a^2 \geq b$, $u_3 \geq 0$ are equivalent to  $0 \leq u_3 \leq \frac 1 3$. In this range, $u_3$ can be parameterized by $\frac{u}{1+u+u^2}$, with $u \in [0,1]$. Finally, we check that the proposed parameterization~\eqref{uparameterization} works.}
\begin{equation}
\label{uparameterization}
u_1 = \frac{1+u}{1+u+u^2}, \quad u_2 = \frac{u(1+u)}{1+u+u^2}, \quad u_3 = \frac{u}{1+u+u^2} \quad 0 \leq u \leq 1.
\end{equation}

We start with the expression
$$
\int \mathcal{I}(\omega_1,\omega_2,\omega_3) \delta (F_1(\omega_1,\omega_2,\omega_3))  \delta (F_2(\omega_1,\omega_2,\omega_3)) \, d\omega_1 \, d\omega_2 \,d\omega_3 ,
$$
where $F_1 = \omega_1^{2} + \omega_2^{2} +\omega_3^{2}-\omega^{2}$. Changing coordinates by setting $\omega_i = \zeta_i  \omega$, the above becomes
$$
\int \mathcal{I}(\omega_1,\omega_2,\omega_3) \delta (F_1(\zeta_1,\zeta_2,\zeta_3)) \delta (F_2(\zeta_1,\zeta_2,\zeta_3)) \, d\zeta_1 \, d\zeta_2 \,d\zeta_3;
$$
using then $\delta(F_2)$ to eliminate the variable $\zeta_3=\zeta_1 + \zeta_2 -1$, we obtain
\begin{align*}
& \int \widetilde{\mathcal{I}} (\omega_1,\omega_2) \delta(\widetilde{F_1}(\zeta_1,\zeta_2)) \, d\zeta_1 \, d\zeta_2,
\end{align*}
where $\widetilde{F_1}(\zeta_1,\zeta_2) = F_1(\zeta_1,\zeta_2,\zeta_1+\zeta_2-1)$.
Here, it is possible to restrict to the case $\zeta_1 \geq \zeta_2$, since the case $\zeta_2 \geq \zeta_1$ is symmetric and will be accounted for by a factor 2. We now want to apply~\eqref{coarea2} and~\eqref{simplecoarea} with $\varphi(u) = (u_1,u_2)$. A small computation shows that
\begin{align*}
\partial_2 \widetilde{F_1} = 2 \zeta_1 + 4 \zeta_2 - 2 = \frac{2u^2 + 4u}{1+u+u^2}, \; \;\; \;
|\varphi_1'(u)| = \frac{u^2 + 2u}{1+u+u^2} \; \implies \; \left| \frac{\varphi_1'(u) }{\partial_2 \widetilde{F_1}} \right| = \frac{1}{2(1+u+u^2)}.
\end{align*}

Therefore,
\begin{equation}
\label{formulaS1}
\begin{split}
\mathcal{S}_1 (f, f,f) & = \mathcal{C}_1 (f, f,f)\\
& = 2 \omega^{4\beta + 3}  \int_0^1  \left( \frac{1+u}{1+u+u^2} \right)^{\beta+1} \left( \frac{u}{1+u+u^2} \right)^{\beta+1}  \left( \frac{u(1+u)}{1+u+u^2} \right)^{\beta+1}  \\
& \qquad \qquad \qquad \cdot ( f_1 f_2 f_3 + f_1 f_2 f_\omega - f_1 f_\omega f_3 - f_\omega f_2 f_3  ) \frac{1}{2(1+u+u^2)}  du \\
& =  \omega^{4\beta + 3}  \int_0^1 \frac{u^{2\beta +2} (u+1)^{2\beta+2}}{(1+u+u^2)^{3\beta+4}}  ( f_1 f_2 f_3 + f_1 f_2 f_\omega - f_1 f_\omega f_3 - f_\omega f_2 f_3  ) du ,
\end{split}
\end{equation}

\subsubsection{The case $(-,-, +)$}
\label{SSS:parametrize--+}

We start with the equations
\begin{equation}
\label{eq--+}
\left\{
\begin{array}{l}
u_3+1 = u_1 + u_2 \\
u_3^2 = u_1^2 + u_2^2 + 1
\end{array},
\qquad u_0,u_1,u_2 \geq 0, \qquad u_1 \leq u_2.
\right.
\end{equation}
Since the application $(u_1,u_2,u_3) \mapsto (\frac{u_1}{u_3},\frac{u_2}{u_3},\frac{1}{u_3})$ maps solutions of~\eqref{eq--+} to solutions of~\eqref{eq+++}, we see that the solutions in $\mathbb{R}_+^3$ of~\eqref{eq--+} can be parameterized by
$$
u_1 = u+1 , \quad  u_2 = \frac{u+1}{u} , \quad u_3 = \frac{1+u+u^2}{u}
$$
with $0\leq u \leq1$. As in the previous paragraph, we write
\begin{align*}
&\int \mathcal{I}(\omega_1,\omega_2,\omega_3) \delta (F_1(\omega_1,\omega_2,\omega_3))  \delta (F_2(\omega_1,\omega_2,\omega_3)) \, d\omega_1 \, d\omega_2 \,d\omega_3 \\
& \qquad\qquad\qquad\qquad \qquad\qquad\qquad\qquad\qquad = \int \widetilde{\mathcal{I}} (\omega_1,\omega_2) \delta(\widetilde{F_1}(\zeta_1,\zeta_2)) \, d\zeta_1 \, d\zeta_2,
\end{align*}
except that $F_1(\omega_1,\omega_2,\omega_3)$ is now given by $-\omega_1^2 - \omega_2^2 + \omega_3^2 + \omega^2$; and just like in the previous paragraph, we can restrict matters to the case $\zeta_1 \leq \zeta_2$ by multiplying by a factor 2. Before applying~\eqref{coarea2} and ~\eqref{simplecoarea} with $\varphi(u) = (\varphi_1(u),\varphi_2(u))$, we compute
$$
\partial_2 \widetilde{F_1} = 2\zeta_1 -2 = 2u, \quad |\varphi_1'(u)| =1 \quad \implies \quad \left| \frac{\varphi_1'(u)}{\partial_2 \widetilde{F}_1} \right| = \frac{1}{2u}.
$$
Therefore,
\begin{equation}
\label{formulaS2}
\begin{split}
& \mathcal{S}_2 (f, f, f) = \mathcal{C}_2 (f, f, f) \\
& \;\;= 2 \omega^{4 \beta +3}  \int_0^1  ( u+1 )^{\beta+1}  \left( \frac{1+u+u^2}{u} \right)^{\beta+1} \left( \frac{u+1}{u}  \right)^{\beta+1} ( f_1 f_2 f_3 + f_1 f_2 f_\omega - f_1 f_\omega f_3 - f_\omega f_2 f_3  ) \frac{1}{2u}  du \\
& \;\;= \omega^{4 \beta +3}  \int_0^1 \frac{(1+u+u^2)^{\beta+1} (u+1)^{2\beta+2}}{u^{2\beta +3} }  ( f_1 f_2 f_3 + f_1 f_2 f_\omega - f_1 f_\omega f_3 - f_\omega f_2 f_3  ) du . \\
\end{split}
\end{equation}

\subsubsection{The case $(-,+,-)$}
\label{SSS:parametrize-+-}
By symmetry between the variables $\omega_1$ and $\omega_2$, it is the same as $(+,-,-)$ below (so $\mathcal{C}_3 = \mathcal{C}_4$).

\subsubsection{The case $(+,-,-)$}
\label{SSS:parametrize+--}
First consider the equations
\begin{equation}
\label{eq+--}
\left\{
\begin{array}{l}
u_3+1 = u_1+ u_2 \\
u_1^2 = u_2^2 + u_3^2 + 1.
\end{array}
\right.
\end{equation}
Noticing that the application $(u_1,u_2,u_3) \mapsto (\frac{1}{u_1},\frac{u_3}{u_1},\frac{u_2}{u_1})$ maps solutions of~\eqref{eq+--} to solutions of~\eqref{eq+++}, we see that the solutions in $\mathbb{R}_+^3$ of~\eqref{eq+--} can be parameterized by
$$
u_1 = \frac{1+u+u^2}{u+1}, \quad  u_2 = \frac{u}{u+1}, \quad u_3 = u
$$
with $0<u<\infty$. We write now
\begin{align*}
&\int \mathcal{I}(\omega_1,\omega_2,\omega_3) \delta (F_1(\omega_1,\omega_2,\omega_3))  \delta (F_2(\omega_1,\omega_2,\omega_3)) \, d\omega_1 \, d\omega_2 \,d\omega_3 \\
& \qquad\qquad\qquad\qquad \qquad\qquad\qquad\qquad\qquad = \int \widetilde{\mathcal{I}} (\omega_1,\omega_2) \delta(\widetilde{F_1}(\zeta_1,\zeta_2)) \, d\zeta_1 \, d\zeta_2,
\end{align*}
being understood that $F_1(\omega_1,\omega_2,\omega_3)$ is now given by $\omega_1^2 - \omega_2^2 - \omega_3^2 + \omega^2$. In order to apply~\eqref{coarea2} and ~\eqref{simplecoarea} with $\varphi(u) = (\varphi_1(u),\varphi_2(u))$, we compute
$$
\partial_1 \widetilde{F_1} = -2\zeta_2 +2 = \frac{2}{u+1}, \quad |\varphi_2'(u)| = \frac{1}{(1+u)^2} \quad \implies \quad \left| \frac{\varphi_2'(u)}{\partial_1 \widetilde{F}_1} \right| = \frac{1}{2(1+u)}.
$$
This gives the formula:
\begin{equation}
\begin{split}
\mathcal{C}_4 (f, f, f)
& =   \omega^{4\beta+3} \int_0^\infty  \left( \frac{1+u+u^2}{u+1} \right)^{\beta+1} \left( \frac{u}{u+1} \right)^{\beta+1}  u^{\beta+1}  \\
& \quad \qquad \qquad \qquad \cdot ( f_1 f_2 f_3 + f_1 f_2 f_\omega - f_1 f_\omega f_3 - f_\omega f_2 f_3  ) \frac{1}{2(u+1)}  du \\
& = \frac{1}{2} \omega^{4\beta+3}  \int_0^\infty \frac{(1+u+u^2)^{\beta+1}u^{2\beta+2}}{(u+1)^{2\beta+3}}  ( f_1 f_2 f_3 + f_1 f_2 f_\omega - f_1 f_\omega f_3 - f_\omega f_2 f_3  ) \, du ,  \\
\end{split}
\end{equation}
where
$$
\omega_1 = \omega u_1, \qquad \omega_2 = \omega u_2, \qquad \omega_3 = \omega u_3.
$$
Splitting the above integral into $\int_0^1$ and $\int_1^\infty$ and making a change of variables $u \rightarrow 1/u$ for the $\int_1^\infty$ integral, we obtain
\begin{equation*}
\begin{split}
\mathcal{C}_3 (f, f, f) + \mathcal{C}_4 (f, f, f) = \mathcal{S}_3 (f, f, f) + \mathcal{S}_4 (f, f, f) 
\end{split}
\end{equation*}
where $\mathcal{S}_3$ and $\mathcal{S}_4$ have the desired form.
\subsection{A more symmetrical representation formula}
In this section, we derive the following representation formula for the collision operator:
\begin{equation} \label{collision_rep}
\mathcal{C}(n) (\omega) = \omega^{3+4\beta} \int_0 ^1 \sum_{j,k=1} ^4 (-1)^{j+k} \frac{1}{1+u+u^2}  v_k^{-1} \left ( \prod_{i=1} ^4 \frac{v_i}{v_k} \right )^{\beta+1} \frac{\prod_{i=1} ^4 n_{i,k} }{n_{j,k} } \,du,
\end{equation}
where 
\begin{equation}
\label{defvi}
v_1 = \frac{1+u}{1+u+u^2}, \quad v_2 = 1, \quad v_3 = \frac{u(1+u)}{1+u+u^2}, \quad v_4 = \frac{u}{1+u+u^2},
\end{equation}
$v_{i,k} = v_i/v_k$, and $n_{i,k} = n (\omega v_{i,k})$. 

This formula can be established from the decomposition~\eqref{ZGPDparameterization}, by observing that $\mathcal{S}_1$, $\mathcal{S}_2$, $\mathcal{S}_3$, $\mathcal{S}_4$ correspond to the summands for $k=2,4,1,3$ in~\eqref{collision_rep} respectively. The advantage of this formulation, besides its compactness, is that it encapsulates two different cancellations: the alternating sum over $j$ corresponds to the classical cancellation between gain and loss terms in each of the terms $\mathcal{S}_i$, while the alternating sum over $k$ expresses a cancellation arising between the different terms $\mathcal{S}_i$.

It turns out that the combination of cancellation in $j$ and $k$ is responsible for the gain of regularity. Also, we remark that the even symmetry assumption $n(-k) = n(k)$ was crucial in obtaining $(-1)^k$ cancellation. It would be interesting if one could obtain the cancellation structure without even symmetry.

\section{The collision operator in weighted $L^p$ spaces}

\label{sectionweightedLp}

\subsection{Boundedness in weighted $L^\infty$}
For any $f = f(\omega)$, denote 
\begin{equation}
\| f \|_{L^\infty_{\theta, \gamma}} := \| m(\omega) f \|_{L^\infty} ,
\end{equation}
where
\[ m(\omega)=
\begin{cases} 
|\omega|^{-\theta} & | \omega | \leq 1 \\
|\omega|^\gamma & | \omega | > 1 
\end{cases}
\]
and $\theta$, $\gamma$ are constants.
%non-negative constants.

\begin{lemma} \label{lemma1}
If $-1/2 \leq \beta \leq 0$, $\gamma > 2\beta + 2$ and $\theta > -2\beta -1$,
\begin{equation}
\|\mathcal{F} (f, g, h)\|_{L^\infty_{\theta, \gamma}} \lesssim \|f\|_{L^\infty_{\theta, \gamma}} \|g\|_{L^\infty_{\theta, \gamma}} \|h\|_{L^\infty_{\theta, \gamma}} .
\end{equation}
\end{lemma}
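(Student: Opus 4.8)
The plan is to work from the symmetric representation~\eqref{collision_rep} (equivalently, from the decomposition~\eqref{ZGPDparameterization} into $\mathcal{S}_1,\dots,\mathcal{S}_4$), which exhibits the trilinear operator as a finite sum of genuinely one-dimensional integrals
\[
\omega^{3+4\beta}\int_0^1 K(u)\, f(\omega a(u))\, g(\omega b(u))\, h(\omega c(u))\, du ,
\]
where, for each summand, $a(u),b(u),c(u)$ are among the explicit ratios $u_1,u_2,u_3,1$ listed after~\eqref{ZGPDparameterization} and $K$ is the corresponding explicit rational kernel. Since the trivial resonances have already been discarded in Section~\ref{sectioncollision}, these are the only terms to control. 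On the open interval $(0,1)$ each $K$ is smooth and $a,b,c$ are bounded away from $0$ and $\infty$, so the only delicate region is the endpoint $u\to0$: there $K(u)\sim u^{2\beta+2}$ in $\mathcal{S}_1,\mathcal{S}_3$ (harmless, integrable precisely because $\beta\ge-1/2$), while $K(u)\sim u^{-2\beta-3}$ in $\mathcal{S}_2,\mathcal{S}_4$, which is where the work lies. It is worth noting that for the $L^\infty$ bound no cancellation is needed: the triangle inequality applied termwise to $f_1f_2f_3+f_1f_2f_\omega-f_1f_\omega f_3-f_\omega f_2 f_3$ already suffices, the cancellation structure of~\eqref{collision_rep} being relevant instead for the negative result and for the gain of regularity.

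Next I would insert the elementary pointwise bound $|f(\sigma)|\le\|f\|_{L^\infty_{\theta,\gamma}}\,m(\sigma)^{-1}$, with $m(\sigma)^{-1}=\sigma^{\theta}\mathbf{1}_{\sigma\le1}+\sigma^{-\gamma}\mathbf{1}_{\sigma>1}$, and likewise for $g$ and $h$. This reduces the lemma to the scalar claim that, for each summand,
\[
\omega^{3+4\beta}\int_0^1 K(u)\, m(\omega a(u))^{-1}\, m(\omega b(u))^{-1}\, m(\omega c(u))^{-1}\, du \;\lesssim\; m(\omega)^{-1}.
\]
I would prove this by the obvious case analysis: first split $\omega\le1$ versus $\omega>1$; then, inside the $u$-integral, split the range of $u$ according to whether each of $\omega a(u),\omega b(u),\omega c(u)$ is $\le1$ or $>1$. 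Because on the relevant ranges each of $a,b,c$ is comparable either to a constant, to $u$, or to $1/u$, these thresholds occur at $u$ comparable to $\omega^{\pm1}$, so there are only finitely many subintervals; on each of them the integrand is an explicit power of $u$ times an explicit power of $\omega$, and the $u$-integral is computed directly.

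The content of the argument is the bookkeeping of exponents, and this is exactly where the three hypotheses enter, each in a sharp way. For $u\to0$ in $\mathcal{S}_2$ (and, symmetrically, $\mathcal{S}_4$) two of the three arguments blow up like $\omega/u$ while the third stays comparable to $\omega$; the term in which only one factor carries the decaying weight, of size $(\omega/u)^{-\gamma}$, produces $\int_0 u^{\gamma}\,u^{-2\beta-3}\,du$, which converges precisely when $\gamma>2\beta+2$. In the regime $\omega\le1$ with $u\lesssim\omega$ (so that $\omega/u\ge1$), integrating this same contribution and recombining with the prefactor $\omega^{3+4\beta}$ yields a bound of the form $\omega^{\,1+2\beta+2\theta}$, which is $\lesssim\omega^{\theta}=m(\omega)^{-1}$ exactly when $\theta>-2\beta-1$. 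Finally, for $\omega>1$ the worst contribution comes from $\mathcal{S}_1$ near $u\to0$, where the estimate evaluates to a multiple of $\omega^{2\beta-\gamma}$, which is $\lesssim\omega^{-\gamma}=m(\omega)^{-1}$ precisely because $\beta\le0$; the assumption $\beta\ge-1/2$ enters both through the integrability of the $\mathcal{S}_1,\mathcal{S}_3$ kernels at $0$ and through the relation $\gamma>2\beta+2\ge\beta+3/2$, which keeps the large-$\omega$ sub-case analysis uniform.

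I expect the main obstacle to be organizational rather than conceptual: there are on the order of $2\times2^{3}$ sub-cases per summand and four summands, and one must check that every exponent inequality that surfaces is implied by — and, for the borderline ones, saturated by — the three stated conditions $-1/2\le\beta\le0$, $\gamma>2\beta+2$, $\theta>-2\beta-1$. No individual estimate is hard; the difficulty is to arrange the case distinction so that the endpoints $u\to0$, $u\to1$ and the thresholds $\omega a(u)=1$, $\omega b(u)=1$, $\omega c(u)=1$ are all covered without omission, and to keep track of which of $f,g,h$ occupies which slot so that the trilinear (rather than merely cubic) statement is obtained.
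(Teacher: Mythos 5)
Your proposal is correct and follows essentially the same route as the paper: drop the cancellations, bound each $\mathcal{S}_i$ termwise after substituting $|f(\sigma)|\le\|f\|_{L^\infty_{\theta,\gamma}}m(\sigma)^{-1}$, and run the case analysis on $\omega\lessgtr1$ and on $u$ relative to the thresholds $\omega^{\pm1}$, collecting exponent conditions. One small correction to your accounting: the hypothesis $\beta\ge-1/2$ is not forced by integrability of the $\mathcal{S}_1,\mathcal{S}_3$ kernels at $u=0$ (which only needs $\beta>-3/2$) but by the $\mathcal{S}_2,\mathcal{S}_4$ contribution with \emph{two} factors at the large argument $\omega/u$ in the regime $\omega\le1$, $u\le\omega$, which produces $\omega^{2\beta+1}$ and hence the condition $2\beta+1\ge0$.
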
 

\begin{remark} The ranges of the parameters $\beta$, $\theta$ and $\gamma$ in this statement are optimal if one ignores the cancellations in the collision term, as can be seen from the proof. The critical space given the scaling of the equation would be $\theta = - \gamma = -2\beta - \frac{3}{2}$.
\end{remark}

\begin{proof}
The proof of this result will not take advantage of any cancellation arising from the alternating signs in the collision operators. Accordingly, we will denote $\mathcal{F}$ for the collision operator where all signs have been changed to $+$. The decomposition~\eqref{ZGPDparameterization} becomes $\mathcal{F} = \mathcal{F}_1 + \mathcal{F}_2 + \mathcal{F}_3 + \mathcal{F}_4$.

It suffices to prove that 
\begin{equation}
m (\omega) |\mathcal{F} (m (\omega)^{-1}, m (\omega)^{-1}, m (\omega)^{-1} )| \lesssim 1.
\end{equation}

\medskip

\noindent \underline{The term $\mathcal{F}_1$.} On the one hand, if $\omega \leq 1$, it follows from~\eqref{formulaS1} that
\begin{equation}
\label{F1<1}
\begin{split}
&  \omega^{-\theta} \mathcal{F}_1 (m (\omega)^{-1}, m (\omega)^{-1}, m (\omega)^{-1} ) \lesssim  \omega^{4 \beta +3-\theta} \int_0^1 u^{2\beta +2} ( u^{2\theta} \omega^{3\theta} + u^{\theta} \omega^{3\theta}   ) du \lesssim \omega^{4 \beta +3 + 2\theta}
\end{split}
\end{equation}
if 
\begin{equation}
\label{condition1}
2 \beta +3 +  \theta > 0 \quad \mbox{and} \quad 2\beta + 3 + 2\theta >0.
\end{equation}
The right-hand side of~\eqref{F1<1} is $\lesssim 1$ if
\begin{equation}
\label{condition2}
4 \beta +3 +2 \theta \geq 0.
\end{equation}

On the other hand, if $\omega > 1$, 
\begin{equation}
\label{F1>1}
\begin{split}
&   \omega^{\gamma} \mathcal{F}_1  (m (\omega)^{-1}, m (\omega)^{-1}, m (\omega)^{-1} ) \\
& \qquad  \lesssim \omega^{4 \beta +3 + \gamma}  \left[  \int_{1/\omega}^1  u^{2\beta +2} ( u^{-\gamma} \omega^{-3 \gamma}  + u^{-2 \gamma} \omega^{-3 \gamma}  ) du \right.+ \left.  \int_0^{1/\omega}  u^{2\beta +2} ( u^{2\theta} \omega^{2\theta-\gamma} + u^{\theta} \omega^{\theta-2 \gamma}   )  du \right]  \\
& \qquad \sim  \omega^{4 \beta +3 - 2\gamma} +   \omega^{2\beta- \gamma} +  \omega^{2\beta  } .
\end{split}
\end{equation}
if
\begin{equation}
\label{condition3}
2\beta + 3 + \theta  >0 \quad \mbox{and} \quad \quad 2\beta + 3 + 2 \theta >0.
\end{equation}

The right-hand side of~\eqref{F1>1} is $\lesssim 1$ provided 
\begin{equation}
\label{condition4}
4 \beta +3 - 2\gamma \leq 0, \quad 2 \beta - \gamma \leq 0 \quad \mbox{and} \quad \beta \leq 0. 
\end{equation}

\medskip

\noindent \underline{The term $\mathcal{F}_2$.} On the one hand, if $\omega < 1$, 
\begin{equation}
\label{F2<1}
\begin{split}
&   \omega^{-\theta} \mathcal{F}_2  (m (\omega)^{-1}, m (\omega)^{-1}, m (\omega)^{-1} ) \\
& \qquad  \lesssim \omega^{4 \beta +3 - \theta}  \left[  \int_{0}^\omega  u^{-2\beta -3}( \omega^{2\theta -\gamma} u^{\gamma}  + \omega^{\theta -2 \gamma} u^{2\gamma}  ) du \right. + \left.  \int_\omega^1 u^{-2\beta -3} ( \omega^{3\theta} u^{-2\theta} + \omega^{3\theta} u^{-\theta}  )  du \right]  \\
& \qquad \sim \omega^{4\beta + 3 +2 \theta} + \omega^{2\beta + 1 + \theta} + \omega^{2\beta +1} 
\end{split}
\end{equation}
if
\begin{equation}
\label{condition5}
2\beta + 2 - \gamma < 0 \quad \mbox{and} \quad 2\beta + 2 - 2\gamma < 0.
\end{equation}

The right-hand side of~\eqref{F2<1} is $\lesssim 1$ provided 
\begin{equation}
\label{condition6}
4\beta + 3 +2 \theta \geq 0, \quad 2\beta + 1 + \theta \geq 0, \quad \mbox{and} \quad 2\beta+1 \geq 0.\end{equation}

If $\omega > 1$, we have
\begin{equation}
\label{F2>1}
\begin{split}
& \omega^{\gamma} \mathcal{F}_2  (m (\omega)^{-1}, m (\omega)^{-1}, m (\omega)^{-1} ) \lesssim  \omega^{4 \beta +3 + \gamma} \int_0^1 u^{-2 \beta -3} (\omega^{-3\gamma} u^\gamma + \omega^{-3\gamma} u^{2\gamma}) \,du \lesssim \omega^{4\beta + 3 - 2\gamma}
\end{split}
\end{equation}
if
\begin{equation}
\label{condition7}
2 \beta + 2 - \gamma < 0 \quad \mbox{and} \quad 2 \beta + 2 - 2 \gamma < 0.
\end{equation}
The right-hand side of~\eqref{F2>1} is $\lesssim 1$ if
\begin{equation}
\label{condition8}
4\beta + 3 - 2\gamma \leq 0.
\end{equation}

\medskip
\noindent
\underline{The term $\mathcal{F}_3$} It can be bounded exactly like $\mathcal{F}_1$.

\medskip
\noindent
\underline{The term $\mathcal{F}_4$} It can be bounded exactly like $\mathcal{F}_2$.

\medskip
\noindent
\underline{Conclusion} The desired statement follows from combining the conditions~\eqref{condition1}, \eqref{condition2}, \eqref{condition3}, \eqref{condition4}, \eqref{condition5}, \eqref{condition6}, \eqref{condition7} and \eqref{condition8}.
\end{proof}

\subsection{Unboundedness in weighted $L^p$ spaces for $p<3$}

\begin{lemma} \label{lemma2}
For any $p < 3$, there exists data $( f^\epsilon )$ in $L^\infty(\frac{1}{10},2)$ such that
$$
\| f^\epsilon \|_{L^p} = O(1) \quad \mbox{while} \quad \| \mathcal{C}(f^\epsilon) \|_{L^p(\frac{1}{2},\frac{3}{2})} \to \infty \quad \mbox{as $\epsilon \to 0$}.
$$
\end{lemma}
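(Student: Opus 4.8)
The plan is to take $f^\epsilon$ to be a cutoff of a fixed function carrying three power‑law singularities, placed so that the collision operator manufactures, at a resonant image point, a singularity which fails to lie in $L^p$ exactly when $p<3$. Concretely, fix $\omega_0\in(\tfrac13,1)$ in generic position (avoiding a finite set of algebraic coincidences specified below), so that $\omega_0$, $\tfrac{\omega_0}{2}$ and $\omega_*:=\tfrac32\omega_0$ are three distinct points of $(\tfrac1{10},2)$ with $\omega_*\in(\tfrac12,\tfrac32)$, and choose $\sigma$ with $\tfrac15+\tfrac2{5p}<\sigma<\min(1,\tfrac1p)$ --- an interval which is nonempty \emph{precisely because} $p<3$ (note that $\tfrac15+\tfrac2{5p}>\tfrac13$ and $\tfrac1p>\tfrac13$ are both equivalent to $p<3$). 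Set
\[ f^0(\omega)=|\omega-\omega_0|^{-\sigma}+\bigl|\omega-\tfrac{\omega_0}{2}\bigr|^{-\sigma}+|\omega-\omega_*|^{-\sigma},\qquad f^\epsilon=\min\bigl(f^0,\epsilon^{-\sigma}\bigr)\,\mathbf{1}_{(1/10,\,2)}. \]
Since $\sigma p<1$ we have $\|f^\epsilon\|_{L^p}\le\|f^0\mathbf{1}_{(1/10,2)}\|_{L^p}=O(1)$, while $\|f^\epsilon\|_{L^\infty}\sim\epsilon^{-\sigma}\to\infty$; the concentration is unavoidable, because for $\beta$ in the range of Lemma~\ref{lemma1} a bounded family $f^\epsilon$ would give a bounded family $\mathcal{C}(f^\epsilon)$.

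The crux is the behaviour of $\mathcal{C}(f^\epsilon)$ as $\omega\to\omega_*^-$, which I would extract from the single term $\mathcal{S}_1$ of~\eqref{ZGPDparameterization}, localized to $u$ near $1$. There the nodes collide, $u_1(u)=u_2(u)=\tfrac23$, while $u_3$ is critically tangent, $u_3(u)=\tfrac13-\tfrac19(u-1)^2+O\!\bigl((u-1)^3\bigr)$, and the resonant quadruple $\bigl(\tfrac23\omega_*,\tfrac23\omega_*,\tfrac13\omega_*,\omega_*\bigr)=\bigl(\omega_0,\omega_0,\tfrac{\omega_0}{2},\omega_*\bigr)$ meets all three prescribed singularities simultaneously. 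Writing $\eta=\omega-\omega_*<0$ and $v=u-1$, each of the two monomials of $\mathcal{S}_1$ containing both $n_3$ and $n_\omega$ (namely $-n_1 n_3 n_\omega$ and $-n_2 n_3 n_\omega$) equals $-f^\epsilon(\omega)\int K_1(u)\,f^\epsilon\!\bigl(u_i(u)\omega\bigr)\,f^\epsilon\!\bigl(u_3(u)\omega\bigr)\,du$, $i=1,2$; here $f^\epsilon(\omega)\sim|\eta|^{-\sigma}$, the factor $f^\epsilon(u_i(u)\omega)\gtrsim(|v|+b|\eta|)^{-\sigma}$ ($b>0$) carries a (transverse, interior) singularity of the input on the scale $|v|\sim|\eta|$, and --- crucially --- the tangency of $u_3$ forces $f^\epsilon(u_3(u)\omega)\gtrsim\bigl(|\eta|+v^2\bigr)^{-\sigma}$, a \emph{plateau} of height $|\eta|^{-\sigma}$ over $|v|\lesssim|\eta|^{1/2}$. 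Splitting the $u$‑integral at the two scales $|\eta|$ and $|\eta|^{1/2}$ so produced yields $\int K_1\,f^\epsilon(u_i\omega)f^\epsilon(u_3\omega)\,du\gtrsim|\eta|^{(1-3\sigma)/2}$ for $|\eta|\in(C\epsilon,\delta_0)$, hence each such monomial is $\gtrsim|\eta|^{(1-5\sigma)/2}$ in absolute value and they share the same (negative) sign.

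One then checks that the two remaining monomials of $\mathcal{S}_1$, $n_1n_2n_3$ and $n_1n_2n_\omega$, carry only singularities of \emph{strictly larger} exponent at $\omega_*$ (being $O(|\eta|^{1-3\sigma})+O(|\eta|^{(1-4\sigma)/2})$, which beat $(1-5\sigma)/2$ for $\tfrac13<\sigma<1$), and that $\mathcal{S}_2,\mathcal{S}_3,\mathcal{S}_4$ produce no singularity of order $\le(1-5\sigma)/2$ at $\omega_*$ --- true for $\omega_0$ in generic position, since the resonant images of $\{\omega_0,\tfrac{\omega_0}{2},\omega_*\}$ under those sheets then miss $\omega_*$. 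Therefore $|\mathcal{C}(f^\epsilon)(\omega)|\gtrsim|\omega-\omega_*|^{(1-5\sigma)/2}$ for $\omega\in(\omega_*-\delta_0,\,\omega_*-C\epsilon)$, and since $\tfrac{1-5\sigma}{2}p<-1$ by the choice of $\sigma$,
\[ \|\mathcal{C}(f^\epsilon)\|_{L^p(1/2,\,3/2)}^{\,p}\ \gtrsim\ \int_{C\epsilon}^{\delta_0}t^{(1-5\sigma)p/2}\,dt\ \gtrsim\ \epsilon^{\,1+(1-5\sigma)p/2}\ \longrightarrow\ \infty , \]
which is the assertion.

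The main obstacle is the confluence estimate above: organizing the $u$‑integral of $\mathcal{S}_1$ around the two scales generated by the transverse input singularity in $f^\epsilon(u_i(u)\omega)$ and the quadratic tangency of $u_3$ at $u=1$, and then tracking signs and magnitudes through all twelve monomials of $\mathcal{S}_1,\dots,\mathcal{S}_4$ carefully enough to rule out a cancelling singularity of order $\le(1-5\sigma)/2$ at $\omega_*$ (where the genericity of $\omega_0$ enters). By contrast, the bound $\|f^\epsilon\|_{L^p}=O(1)$ and the final integration are routine.
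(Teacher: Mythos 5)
Your construction is essentially the paper's: you place three concentrated profiles at the resonant triple in the fixed ratio $\tfrac12:1:\tfrac32$ (i.e.\ the paper's $\tfrac13,\tfrac23,1$ up to scale), exploit the degeneracy $u_3'(1)=0$ of the $(+,+,+)$ parameterization so that the two monomials containing $n_3 n_\omega$ dominate, and use that these two share a sign to preclude cancellation; the threshold $p=3$ emerges from the same scaling count. The only real difference is cosmetic: you use truncated power-law singularities $|\omega-\omega_j|^{-\sigma}$ with a single cutoff scale and read off the blow-up rate $|\eta|^{(1-5\sigma)/2}$ over a whole range of $\eta$, whereas the paper uses indicator bumps whose anisotropic widths $\epsilon^2,\epsilon,\epsilon^2$ encode the quadratic tangency directly; both are valid and your confluence estimate is correct.

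One point of your justification is off, though not fatally. You invoke ``genericity of $\omega_0$'' to argue that $\mathcal{S}_2,\mathcal{S}_3,\mathcal{S}_4$ (and the non-dominant configurations within $\mathcal{S}_1$) contribute no competing singularity at $\omega_*$. But all three singularity locations are fixed multiples of $\omega_0$, and the resonance conditions are homogeneous, so varying $\omega_0$ merely rescales the entire configuration: whether some sheet maps two of your singular points onto a third is a property of the fixed ratios $1:2:3$, not of $\omega_0$, and genericity buys nothing. What is actually required is the finite combinatorial verification that the only solution of $x_1+x_2-x_3-x_4=0$, $x_1^2+x_2^2+x_3^2-x_4^2=0$ (and its sign variants) with three coordinates in the prescribed triple is $(2/3,2/3,1/3,1)$ up to scale --- precisely the ``combinatorial fact'' the paper states and uses. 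That fact is true, so your argument goes through once you replace the genericity claim by this explicit check.
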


\begin{proof}
We start with the following combinatorial fact: consider the system of equations
\begin{align*}
& x_1 + x_2 -x_3 - x_4 = 0 \\
& x_1^2 + x_2^2 + x_3^2 - x_4^2 = 0.
\end{align*}
The solutions of this system of equations such that, for some $\{ i,j,k \} \subset \{ 1,2,3,4 \}$, there holds $\{ x_i,x_j,x_k \} \subset \{1/3,2/3,1\}$, reduce to
$$
(x_1,x_2,x_3,x_4) = (2/3,2/3,1/3,1).
$$

We now consider the data
$$
f^\epsilon (\omega) = \epsilon^{-\frac{2}{p}} \mathbf{1}(\epsilon^{-2}(\omega - 1/3)) + \epsilon^{-\frac{1}{p}} \mathbf{1}(\epsilon^{-1}(\omega - 2/3)) + \epsilon^{-\frac{2}{p}} \mathbf{1}(\epsilon^{-2}(\omega - 1)),
$$
which is such that
$$
\| f^\epsilon \|_{L^p} \sim 1.
$$

Our aim is now to estimate the size of $\mathcal{C}(f^\epsilon)(\omega)$ if $|\omega - 1| \ll \epsilon^2$. Recall from the formula~\eqref{MMT-kwe} that the collision operator $\mathcal{C}$ can be split into $\mathcal{C}_1 + \mathcal{C}_2 + \mathcal{C}_3 + \mathcal{C}_4$.

\medskip

\noindent \underline{The contribution of $\mathcal{C}_2(f^\epsilon)$.} The kernel of this operator is supported on the zero set of $\omega_1 + \omega_2 - \omega_3 - \omega$ and $\omega_1^2 + \omega_2^2 - \omega_3^2 + \omega^2$. By the combinatorial fact above, the term $\mathcal{C}_2(f^\epsilon)$ is localized close to $1/3$, and therefore
$$
\mathcal{C}_2(f^\epsilon)(\omega) = 0 \qquad \mbox{if $|\omega - 1| \ll 1$}.
$$

\medskip

\noindent \underline{The contribution of $\mathcal{C}_3(f^\epsilon) = \mathcal{C}_4(f^\epsilon)$.} The kernel of this operator is supported on the zero set of $\omega_1 + \omega_2 - \omega_3 - \omega$ and $\omega_1^2 - \omega_2^2 + \omega_3^2 + \omega^2$. By the combinatorial fact above, the term $\mathcal{C}_3(f^\epsilon)$ is localized close to $2/3$, and therefore
$$
\mathcal{C}_3(f^\epsilon)(\omega) = \mathcal{C}_4(f^\epsilon)(\omega) = 0 \qquad \mbox{if $|\omega - 1| \ll 1$}.
$$

\medskip

\noindent \underline{The contribution of $\mathcal{C}_1(f^\epsilon) $.} The kernel of this operator is supported on the zero set of $\omega_1 + \omega_2 - \omega_3 - \omega_4$ and $\omega_1^2 + \omega_2^2 + \omega_3^2 - \omega^2$. By the combinatorial fact above, only frequencies close to $(\omega_1,\omega_2,\omega_3,\omega) = (2/3,2/3,1/3,1)$ have a nonzero contribution to $\mathcal{C}_1(f^\epsilon)$. 

We now recall the formula~\eqref{formulaS1} for $\mathcal{C}_1$:
\begin{align*}
\mathcal{C}_1(f^\epsilon) 
& = \omega^{4\beta + 3}  \int_0^1 \frac{u^{2\beta +2} (u+1)^{2\beta+2}}{(1+u+u^2)^{3\beta+4}}  ( f_1 f_2 f_3 + f_1 f_2 f_\omega - f_1 f_\omega f_3 - f_\omega f_2 f_3  ) \, du\\
& = \mathcal{C}_1^1 (f^\epsilon) +  \mathcal{C}_1^2 (f^\epsilon) +  \mathcal{C}_1^3 (f^\epsilon) +  \mathcal{C}_1^4 (f^\epsilon)
\end{align*}
where
\begin{align*}
& \omega_1 = \omega u_1, \qquad \omega_2 = \omega u_2, \qquad \omega_3 = \omega u_3 \\
& u_1 = \frac{1+u}{1+u+u^2}, \quad u_2 = \frac{u(1+u)}{1+u+u^2}, \quad \mbox{and} \quad u_3 = \frac{u}{1+u+u^2}.
\end{align*}

\medskip

\noindent \underline{The contribution of $\mathcal{C}_1^1(f^\epsilon) $.} It is given by
\begin{align*}
\mathcal{C}_1^1(f^\epsilon) = \epsilon^{-4/p} \omega^{4\beta + 3} &  \int_0^1 \frac{u^{2\beta +2} (u+1)^{2\beta+2}}{(1+u+u^2)^{3\beta+4}}  \\
& \qquad \qquad \mathbf{1}(\epsilon^{-1}(\omega_1 - 2/3)) \mathbf{1}(\epsilon^{-1}(\omega_2 - 2/3))  \mathbf{1}(\epsilon^{-2}(\omega_3 - 1/3)) \,du
\end{align*}
Notice that
$$
u_1(1) = \frac{2}{3} \quad \mbox{and} \quad \left. \frac{d}{du} u_1(u) \right|_{u=1} = - \frac{1}{3} \neq 0.
$$
Therefore, if $\omega$ is close to $1$, the function $u \mapsto \mathbf{1}(\epsilon^{-1}(\omega_1 - 2/3))$ is nonzero on a set of a length $O(\epsilon)$. Therefore,
$$
| \mathcal{C}_1^1 (f^\epsilon)(\omega)| \lesssim \epsilon^{1- \frac{4}{p}}.
$$

\medskip

\noindent \underline{The contribution of $\mathcal{C}_1^2(f^\epsilon) $.} It can be estimated just like $\mathcal{C}_1^1$: we find, for $\omega$ close to $1$,
$$
| \mathcal{C}_1^2 (f^\epsilon)(\omega)| \lesssim \epsilon^{1- \frac{4}{p}}.
$$

\medskip

\noindent \underline{The contribution of $\mathcal{C}_1^3(f^\epsilon) $.} 
It is given by
\begin{align*}
\mathcal{C}_1^3(f^\epsilon) = \epsilon^{-5/p} \omega^{4\beta + 3} &  \int_0^1 \frac{u^{2\beta +2} (u+1)^{2\beta+2}}{(1+u+u^2)^{3\beta+4}}  \\
& \qquad \qquad \mathbf{1}(\epsilon^{-1}(\omega_1 - 2/3)) \mathbf{1}(\epsilon^{-2}(\omega_3 - 1/3))  \mathbf{1}(\epsilon^{-2}(\omega - 1)) \,du
\end{align*}

The crucial point is that
$$
u_3(1) = \frac{1}{3} \quad \mbox{and} \quad \left. \frac{d}{du} u_3(u) \right|_{u=1} =0.
$$
Therefore, if $|\omega - 1| \ll \epsilon^2$ and $|u -1 | \ll \epsilon$, there holds
$$
\mathbf{1}(\epsilon^{-2}(\omega - 1)) = \mathbf{1}(\epsilon^{-1}(\omega_1 - 2/3)) =  \mathbf{1}(\epsilon^{-2}(\omega_3 - 1/3)) = 1.
$$
As a result, we can estimate from below, if $|\omega - 1| \ll \epsilon^2$,
$$
|\mathcal{C}^3_1(f^\epsilon)(\omega)| \gtrsim \epsilon^{1 - \frac{5}{p}}.
$$

\medskip

\noindent \underline{The contribution of $\mathcal{C}_1^4(f^\epsilon) $.} By the same argument,
$$
|\mathcal{C}^4_1(f^\epsilon)(\omega)| \gtrsim \epsilon^{1 - \frac{5}{p}}
$$
(but this lower bound is not even necessary for the rest of the argument, since $\mathcal{C}^4_1$ has the same sign as $\mathcal{C}^3_1$).

\medskip

\noindent \underline{Conclusion} Gathering the contributions of all the terms above, we find that, if $|\omega - 1| \ll \epsilon^2$,
$$
| \mathcal{C}(f^\epsilon)(\omega)| \gtrsim \epsilon^{1 - \frac{5}{p}}.
$$
Therefore,
$$
\|\mathcal{C}(f^\epsilon) \|_{L^p} \gtrsim \epsilon^{1 - \frac{3}{p}},
$$
from which the desired result follows.
\end{proof}

\section{Local well-posedness near scale invariant power data}

\label{sectionscaling}

In this section, we establish a priori estimate and local existence of solutions near scale-invariant power spectrum. We set $\beta \in (-1, 1)$, $\gamma = 2 \beta + \frac{3}{2}$, and let 
\begin{equation}
N(\omega) = \omega ^\gamma n(\omega).
\end{equation}
In particular, $\gamma$ for $\beta = -\frac{3}{4}, -\frac{1}{4}$ correspond to the Rayleigh-Jean spectrum, and $\gamma$ for $\beta = \frac{1}{4}, \frac{3}{4}$ correspond to the Kolmogorov-Zhakarov spectrum. 

From \eqref{collision_rep}, the equation for $N$ reads the following:
\begin{equation} \label{Neqn}
\partial_t N(\omega) = \int_0 ^1  \frac{\left (\prod_{i=1} ^4 v_i \right )^{-\beta - \frac{1}{2} } }{1+u+u^2} \sum_{j,k=1} ^4  (-1)^{j+k} v_j ^{2\beta+ \frac{3}{2} } v_k^{2\beta - \frac{1}{2} } \frac{\prod_{i=1} ^4 N_{i,k} }{N_{j,k} } du =: \overline{\mathcal{C}} (N) (\omega),
\end{equation}
where $N_{i,k} = N(\omega v_i/v_k )$. Also, we recall
\begin{equation}
DN (\omega) = \omega \frac{d}{d\omega} N(\omega).
\end{equation}
One has the following useful formula:
\begin{equation}
D(N_{i,k}) = \omega \frac{d}{d\omega}  ( N_{i,k} ) = \omega v_i/v_k N' (\omega v_i/v_k) = (DN) _{i,k}
\end{equation}
and therefore the expression $DN_{i,k}$ is not ambiguous.

\medskip

\noindent \underline{A priori assumptions in Section \ref{Nonnegativity}, \ref{Linfty}, and \ref{Lipschitz}.} We will assume the following:
\begin{enumerate}
\item[(A1)] $p_0$ is a positive integer, and $p_0$ and $\beta$ satisfy 
\begin{equation} \label{betaanddata}
\beta \in (-1, 1), \qquad p_0 > \max \left ( \frac{1}{4(1-\beta)}, \frac{1}{4(1+\beta) } \right ).
\end{equation}

\smallskip
\item[(A2)] $N \in L^\infty (0, T; X)$ is a strong solution of \eqref{Neqn} with $N(0) = N_0 \in X$, that is, for almost every $t \in (0, T)$ $\partial_t N \in L^\infty (0, T; L^\infty((0, \infty ) ))$, $D \partial_t N \in L^1 (0, T; L^{2p_0} (0, \infty ) \cap L^\infty (0, \infty )  )$, and \eqref{Neqn} is satisfied for almost every $(t, \omega)$, 

\smallskip
\item[(A3)] $\inf_{\omega \in (0, \infty ) } N_0 (\omega) > 0$, and

\smallskip
\item[(A4)] We have
\begin{equation}
\int_0 ^{T} \int_0 ^\infty \int_{-1/3} ^{1 }  | DN(\omega) - DN(\omega(1+u) )|^2 \left ( \mathbf{1}_{[-1/3, 0]} (u) u^{-(\beta+1)} + \mathbf{1}_{[0, 1]} (u) u^{2\beta} \right )  du d\omega dt < \infty.
\end{equation} 
\end{enumerate}

\subsection{A priori estimates: Positivity of $N$} \label{Nonnegativity}

We first establish the non-negativity of $N$: suppose that $N_0 (\omega) \ge 0$ for every $\omega>0$ and $t$ is the first time such that there is a $\omega>0$ with $N(t, \omega) = 0$. Then
\begin{equation}
\partial_t N(t, \omega) = \overline{\mathcal{C}}(N) (t, \omega) = \int_0 ^1 \frac{\left (\prod_{i=1} ^4 v_i \right )^{-\beta - \frac{1}{2} } }{1+u+u^2} \sum_{j,k=1} ^4 \delta_{j} ^{k}  (-1)^{j+k} v_j ^{2\beta+ \frac{3}{2} } v_k^{2\beta - \frac{1}{2} } \frac{\prod_{i=1} ^4 N_{i,k} }{N_{j,k} } du
\end{equation} 
since these are the only terms which does not contain $N_{k,k} = N(\omega)$. Since $N_{i,k} \ge 0$ for $i\ne k$, $N(t', \omega) \ge 0$ for all $\omega \in (0, \infty)$ and $t'>t$ with $t'$ sufficiently close to $t$. Thus, $N$ remains nonnegative. Moreover, if we write
\begin{equation}
\partial_t N^{-1} (t, \omega) = - (N^{-1})^2 \overline{C} (N) (t, \omega).
\end{equation}
We see, from Section \ref{Linfty}, that
\begin{equation}
\| \partial_t N^{-1} \|_{L^\infty} \lesssim \| N^{-1} \|_{L^\infty}^2 \| N\|_{L^\infty}^2 (\| N\|_{L^\infty} +  \| DN \|_{L^\infty}),
\end{equation}
and in the end we have $\| N\|_{L^\infty}^2 (\| N\|_{L^\infty} +  \| DN \|_{L^\infty}) \lesssim 1$. This means that $N^{-1}$ is bounded above for a positive time, or $N$ is bounded below.
To summarize, we have the following:
\begin{lemma} \label{positivitylemma}
There exists a $T_p = T(N_0, \beta, p_0) >0$ and a constant $C(N_0, \beta, p_0)$ such that
$$ \inf_{t \in (0, T_p), \omega \in (0, \infty ) } N(t, \omega) \ge C(N_0, \beta, p_0) > 0. $$
\end{lemma}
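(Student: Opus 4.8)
The plan is to first propagate nonnegativity of $N$ by a maximum-principle argument keyed to the algebraic structure of $\overline{\mathcal{C}}$, and then to upgrade it to a quantitative lower bound through a Gr\"onwall argument for $t \mapsto \|N^{-1}(t)\|_{L^\infty}$, using the $L^\infty$ bound on the collision operator proved in Section \ref{Linfty}.

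For nonnegativity, suppose $N_0 \ge 0$ and let $(t,\omega)$ be a first touching time, i.e. $N(s,\cdot) \ge 0$ for $s \le t$ while $N(t,\omega) = 0$. In the representation \eqref{Neqn} of $\overline{\mathcal{C}}(N)(\omega)$, every summand with $j \ne k$ contains the factor $N_{k,k} = N(\omega\, v_k/v_k) = N(\omega) = 0$, so its $u$-integrand vanishes identically; only the $j=k$ terms survive, and they combine into
\[
\overline{\mathcal{C}}(N)(t,\omega) = \int_0^1 \frac{\left(\prod_{i=1}^4 v_i\right)^{-\beta - \frac12}}{1+u+u^2} \sum_{k=1}^4 v_k^{4\beta+1} \prod_{i \ne k} N_{i,k}\,du ,
\]
which is $\ge 0$ since $N \ge 0$ on $[0,t]$ and all $v_i>0$ (the integral being finite because it equals $\partial_t N(t,\omega)$ for a.e. such point, by the strong-solution assumption (A2)). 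Hence $\partial_t N$ is nonnegative wherever $N$ vanishes, which prevents $N$ from ever becoming negative. Since $\inf_{\omega}$ is over the non-compact interval $(0,\infty)$ and need not be attained, I would make this rigorous via a continuity argument for the (Lipschitz, by (A2)) map $t \mapsto \inf_\omega N(t,\omega)$, or equivalently fold it into the bootstrap below, never invoking the pointwise identity outside the region where positivity is already known.

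For the quantitative lower bound, on any time interval where $N>0$ write $\partial_t N^{-1} = -N^{-2}\,\overline{\mathcal{C}}(N)$. The $L^\infty$ estimate of Section \ref{Linfty} — which exploits both the $j$-cancellation and the $k$-cancellation in \eqref{collision_rep} to tame the $u$-integral near $u=0$ — gives $\|\overline{\mathcal{C}}(N)(t)\|_{L^\infty} \lesssim \|N(t)\|_{L^\infty}^2\bigl(\|N(t)\|_{L^\infty} + \|DN(t)\|_{L^\infty}\bigr)$, hence
\[
\|\partial_t N^{-1}(t)\|_{L^\infty} \lesssim \|N^{-1}(t)\|_{L^\infty}^2\,\|N(t)\|_{L^\infty}^2\bigl(\|N(t)\|_{L^\infty} + \|DN(t)\|_{L^\infty}\bigr).
\]
By the a priori bound of Theorem \ref{apriori} (equivalently, by the working assumption $N \in L^\infty(0,T;X)$ together with the estimates of Section \ref{Linfty}), the factor $\|N(t)\|_{L^\infty}^2(\|N(t)\|_{L^\infty} + \|DN(t)\|_{L^\infty})$ is $\lesssim 1$ on $[0,T]$, so $y(t) := \|N^{-1}(t)\|_{L^\infty}$ is Lipschitz and obeys $y'(t) \le C\,y(t)^2$ for a.e. $t$. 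Comparison with the ODE yields $y(t) \le y(0)/(1 - C y(0) t)$, and since $y(0) = 1/\inf_\omega N_0(\omega) < \infty$ by (A3), taking $T_p = \min\bigl(T, \tfrac{1}{2C y(0)}\bigr)$ gives $y(t) \le 2 y(0)$, i.e. $\inf_\omega N(t,\omega) \ge \tfrac12 \inf_\omega N_0(\omega) > 0$ on $[0,T_p]$. A standard continuity/bootstrap argument (the set of $t$ for which this holds on all of $[0,t]$ is nonempty, closed, and relatively open, using continuity of $N$ in $t$) yields the stated conclusion with $C(N_0,\beta,p_0) = \tfrac12 \inf_\omega N_0(\omega)$.

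The one genuinely delicate point is the non-compactness of the frequency domain: $\inf_\omega N(t,\omega)$ is not attained in general, so the ``first touching time'' cannot be treated by a naive pointwise argument and must be handled through the continuity/bootstrap mechanism above. Everything else is routine once the $L^\infty$ bound on $\overline{\mathcal{C}}$ from Section \ref{Linfty} is in hand; in particular the present lemma is logically downstream of (or proved simultaneously with) that bound and the a priori estimate of Theorem \ref{apriori}.
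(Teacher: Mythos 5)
Your proposal follows essentially the same route as the paper: at a first touching time only the $j=k$ terms of $\overline{\mathcal{C}}(N)$ survive (all others contain the factor $N_{k,k}=N(\omega)=0$) and they carry the sign $(-1)^{2k}=+1$, giving $\partial_t N\ge 0$ there; the quantitative bound then comes from the Riccati inequality for $\|N^{-1}\|_{L^\infty}$ via the $L^\infty$ estimate of Section~\ref{Linfty}. Your treatment is in fact slightly more careful than the paper's (which glosses over the non-attainment of the infimum on $(0,\infty)$ and the continuity/bootstrap step), so this is correct and matches the intended argument.
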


\subsection{A priori estimates: $L^\infty$ estimate} \label{Linfty}
For $L^\infty$ estimate, we use the cancellation coming from gain-loss relation, and thus this is a losing estimate. In the next section, we will establish a non-losing estimate. 
We start from the following simple but useful observation: for $u \in [0,1]$, 
\begin{equation} \label{visizes}
v_1 \sim v_2 \sim 1, \qquad v_3 \sim v_4 \sim u, \qquad v_1 - v_2 = v_4 - v_3 \sim u^2.
\end{equation}

Breaking down $\overline{\mathcal{C}} (N)$ into 4 parts, we obtain
\begin{equation}
\begin{split}
\overline{\mathcal{C}}(N) (\omega) &= \int_0 ^1 W(u) \sum_{k=1} ^4 (-1)^k v_k ^{2\beta - \frac{1}{2} } \times \\
&\left [  (v_2 ^{2\beta + 3/2} - v_1^{2\beta + 3/2}) N_{2,k}N_{3,k}N_{4,k}  + v_2^{2\beta+3/2} (N_{1,k} - N_{2,k})N_{3,k}N_{4,k} \right. \\
&\left.+ (v_4^{2\beta+3/2} - v_3^{2\beta+3/2} ) N_{1,k}N_{2,k}N_{4,k} + v_4^{2\beta+3/2} (N_{3,k} - N_{4,k} ) N_{1,k} N_{2,k}  \right] du,
\end{split}
\end{equation}
where
\begin{equation} \label{weight}
W(u) = \frac{\left (\prod_{i=1} ^4 v_i \right )^{-\beta - \frac{1}{2} } }{1+u+u^2} \sim u^{-2\beta -1}.
\end{equation}
Then, by the Newton-Leibniz formula and the definition of $DN$, we have
\begin{equation}
N_{1,k} - N_{2,k} = \int_0 ^1 DN \left ( \omega \frac{\lambda v_1 + (1-\lambda) v_2}{v_k} \right ) \frac{v_1 - v_2}{\lambda v_1 + (1-\lambda) v_2 } d\lambda
\end{equation}
and consequently by \eqref{visizes}, 
\begin{equation} \label{Ndiffsize1}
|N_{1,k} - N_{2,k} | \lesssim \| DN \|_{L^\infty} \frac{|v_1 - v_2|}{\min (v_1, v_2) } \lesssim u^2 \|DN \|_{L^\infty}.
\end{equation}
Similarly,
\begin{equation} \label{Ndiffsize2}
|N_{3,k} - N_{4,k} | \lesssim \| DN \|_{L^\infty} \frac{|v_3 - v_4|}{\min (v_3, v_4 ) } \lesssim u \| DN \|_{L^\infty}
\end{equation}
and 
\begin{equation} \label{vdiffsize}
|v_2^{2\beta + 3/2} - v_1^{2\beta+3/2} | \lesssim u^2, \qquad |v_4^{2\beta+3/2} - v_3^{2\beta+3/2} | \lesssim u^{2\beta+5/2}.
\end{equation}
Putting all these together, we have
\begin{equation} \label{collisionpointwise}
\begin{split}
\left | \overline{\mathcal{C}} (N) (\omega) \right | &\lesssim \int_0 ^1 u^{-2\beta - 1} ( 1 + u^{2\beta - 1/2} ) \\
&\qquad \qquad \left [ (u^2 + u^{2\beta+5/2} ) \| N \|_{L^\infty}^3 + (u^2 + u^{2\beta+5/2} )\|N\|_{L^\infty}^2 \| DN \|_{L^\infty} \right ] du \\
&\lesssim \int_0^1 u^{-2\beta - 1} (1+u^{2\beta-1/2} )(u^2 + u^{2\beta+5/2} ) du \|N\|_{L^\infty}^2 (\|N\|_{L^\infty} + \|DN \|_{L^\infty} ),
\end{split}
\end{equation}
and the right-hand side integral converges if and only if $\beta \in (-1, 1)$, as the integrand is $u^{-2\beta + 1 } + u^{3/2} + u^{1/2} + u^{2\beta + 1}$. Therefore, we have proved the following:
\begin{lemma}
We have
\begin{equation} \label{LinftyN}
\frac{d}{dt} \| N \|_{L^\infty} \le C \| N \|_{L^\infty}^2 (\| N \|_{L^\infty} + \| DN \|_{L^\infty} ).
\end{equation}
\end{lemma}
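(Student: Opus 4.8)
The plan is to reduce this differential inequality to a pointwise-in-$\omega$ bound on the collision term. Since $N$ is a strong solution we have $\partial_t N = \overline{\mathcal{C}}(N)$ for a.e.\ $(t,\omega)$, and $t \mapsto \|N(t)\|_{L^\infty}$ is Lipschitz (a supremum of the uniformly Lipschitz maps $t \mapsto |N(t,\omega)|$), hence differentiable a.e.\ with $\frac{d}{dt}\|N(t)\|_{L^\infty} \le \|\partial_t N(t)\|_{L^\infty}$. So the whole task is to prove
$$
\sup_{\omega>0} \left| \overline{\mathcal{C}}(N)(\omega) \right| \lesssim \|N\|_{L^\infty}^2 \left( \|N\|_{L^\infty} + \|DN\|_{L^\infty} \right).
$$
I note first that this bound is automatically uniform in $\omega$: the $\omega$-homogeneity of \eqref{collision_rep} is exactly cancelled by the rescaling $N=\omega^\gamma n$, $\gamma=2\beta+\frac{3}{2}$ (one checks $3+4\beta=2\gamma$), so $\overline{\mathcal{C}}(N)(\omega)$ depends on $\omega$ only through the arguments $\omega v_i/v_k$ of $N$ and $DN$, and these lie in $L^\infty$.

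The crux is that the weight $W(u)=(\prod_i v_i)^{-\beta-1/2}/(1+u+u^2)\sim u^{-2\beta-1}$ in \eqref{Neqn} fails to be integrable near $u=0$ when $\beta\ge 0$, so a brute-force estimate diverges and one must use the gain/loss cancellation. For each fixed $k$, I would regroup the alternating sum over $j$ of $\prod_i N_{i,k}/N_{j,k}$, using $v_1-v_2=v_4-v_3$, into a combination of the $N$-differences $N_{1,k}-N_{2,k}$ and $N_{3,k}-N_{4,k}$ together with the coefficient differences $v_2^{2\beta+3/2}-v_1^{2\beta+3/2}$ and $v_4^{2\beta+3/2}-v_3^{2\beta+3/2}$ — this is precisely the four-part decomposition displayed just above the statement. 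Each difference is then quantified via \eqref{visizes} ($v_1\sim v_2\sim1$, $v_3\sim v_4\sim u$, $v_1-v_2=v_4-v_3\sim u^2$) and the Newton--Leibniz identity
$$
N_{a,k}-N_{b,k} = \int_0^1 DN\!\left(\omega\,\frac{\lambda v_a+(1-\lambda)v_b}{v_k}\right)\frac{v_a-v_b}{\lambda v_a+(1-\lambda)v_b}\,d\lambda,
$$
which with $DN(\omega)=\omega N'(\omega)$ yields $|N_{1,k}-N_{2,k}|\lesssim u^2\|DN\|_{L^\infty}$, $|N_{3,k}-N_{4,k}|\lesssim u\|DN\|_{L^\infty}$, and likewise $|v_2^{2\beta+3/2}-v_1^{2\beta+3/2}|\lesssim u^2$, $|v_4^{2\beta+3/2}-v_3^{2\beta+3/2}|\lesssim u^{2\beta+5/2}$.

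Plugging these in, together with $v_k^{2\beta-1/2}\lesssim 1+u^{2\beta-1/2}$ from the $k$-summation, each of the four contributions is bounded by $\|N\|_{L^\infty}^2(\|N\|_{L^\infty}+\|DN\|_{L^\infty})$ times
$$
\int_0^1 u^{-2\beta-1}\left(1+u^{2\beta-1/2}\right)\left(u^2+u^{2\beta+5/2}\right)du,
$$
whose integrand expands to $u^{-2\beta+1}+u^{3/2}+u^{1/2}+u^{2\beta+1}$ and is therefore integrable on $(0,1)$ exactly when $\beta\in(-1,1)$, which holds by (A1). This gives the pointwise bound, and hence the lemma. The hard part is entirely the bookkeeping of powers of $u$ against the singular weight: the estimate closes only because \emph{both} the $N$-differences \emph{and} the coefficient differences contribute two extra powers of $u$ at the origin, and because in the $k=3,4$ summands the loss $v_k^{2\beta-1/2}\sim u^{2\beta-1/2}$ is compensated by the gain $v_4^{2\beta+3/2}-v_3^{2\beta+3/2}\sim u^{2\beta+5/2}$. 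No positivity or lower bound on $N$ is used here; that is needed only for the companion positivity lemma.
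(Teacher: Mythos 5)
Your proposal is correct and follows essentially the same route as the paper: the same regrouping of the alternating sum over $j$ into coefficient differences and $N$-differences, the same Newton--Leibniz bounds $|N_{1,k}-N_{2,k}|\lesssim u^2\|DN\|_{L^\infty}$, $|N_{3,k}-N_{4,k}|\lesssim u\|DN\|_{L^\infty}$ via \eqref{visizes}, and the same final integrand $u^{-2\beta+1}+u^{3/2}+u^{1/2}+u^{2\beta+1}$ converging exactly for $\beta\in(-1,1)$. The only additions — justifying $\frac{d}{dt}\|N\|_{L^\infty}\le\|\partial_t N\|_{L^\infty}$ and noting the $\omega$-uniformity from $3+4\beta=2\gamma$ — are correct details the paper leaves implicit.
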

\subsection{A priori estimates: $\dot{W}^{1,p} \cap W^{1, \infty}$ estimate and regularity gain} \label{Lipschitz}
In this section, we will obtain a local-in-time $L^{2p}$ estimate for $DN$ which is uniform in $p$ for every positive integer $p\ge p_0$, thereby giving an $L^\infty$ estimate for $DN$. 

\medskip

\noindent \underline{Splitting the integral} Let $p$ be a positive integer. We have
\begin{equation}
\begin{split}
\|D N \|_{L^{2p} }^{2p-1}\frac{d}{dt} \|D N \|_{L^{2p} }  = \int_0 ^\infty (DN)^{2p-1} (\omega) D ( \overline{C} (N) ) (\omega) d \omega = \sum_{i=1}  ^6 I_i,
\end{split}
\end{equation}
where
\begin{equation}
\begin{split}
I_1 &= \int_0 ^\infty \int_0 ^1 (DN)^{2p-1} (\omega) W(u) \sum_{k=1} ^4 (-1)^k v_k^{2\beta - 1/2}  (v_2 ^{2\beta+3/2} - v_1^{2\beta+3/2} ) D(N_{2,k} N_{3,k} N_{4,k} ) \, du\, d\omega, \\
I_4 &= \int_0 ^\infty \int_0 ^1 (DN)^{2p-1} (\omega) W(u) \sum_{k=1} ^4 (-1)^k v_k^{2\beta - 1/2}  (v_4 ^{2\beta+3/2} - v_3^{2\beta+3/2} ) D(N_{1,k} N_{2,k} N_{4,k} ) \, du \, d\omega,
\end{split}
\end{equation}
\begin{equation}
\begin{split}
I_2 &= \int_0 ^\infty \int_0 ^1 (DN)^{2p-1} (\omega) W(u) \sum_{k=1} ^4 (-1)^k v_k^{2\beta - 1/2} v_2^{2\beta+3/2} (DN_{1,k} - DN_{2,k}) N_{3,k} N_{4,k} \, du \, d\omega, \\
I_5 &= \int_0 ^\infty \int_0 ^1 (DN)^{2p-1} (\omega) W(u) \sum_{k=1} ^4 (-1)^k v_k^{2\beta - 1/2} v_4^{2\beta+3/2} (DN_{3,k} - DN_{4,k} ) N_{1,k} N_{2,k} \, du \, d\omega,
\end{split}
\end{equation}
and
\begin{equation}
\begin{split}
I_3 &=\int_0 ^\infty \int_0 ^1 (DN)^{2p-1} (\omega) W(u) \sum_{k=1} ^4 (-1)^k v_k^{2\beta - 1/2} v_2^{2\beta+3/2} (N_{1,k} - N_{2,k} ) D(N_{3,k} N_{4,k}) \, du \, d\omega, \\
I_6 &= \int_0 ^\infty \int_0 ^1 (DN)^{2p-1} (\omega) W(u) \sum_{k=1} ^4 (-1)^k v_k^{2\beta - 1/2} v_4^{2\beta+3/2} (N_{3,k} - N_{4,k} ) D(N_{1,k} N_{2,k} ) \, du \, d\omega.
\end{split}
\end{equation}
The morale of the splitting is the following: since we aim not to lose derivatives, we take a closer look at the terms involving cancellation of $DN$s, that is, $I_2, I_5$. For the other terms, $I_1, I_3, I_4$, and $I_6$, the usual cancellation between $N$s can be used. For $I_2$ and $I_5$, it turns out that the most dangerous terms are those involving $DN_{1,1} - DN_{2,1}$ and $DN_{1,2} - DN_{2,2}$ for $I_2$, and $DN_{3,3} - DN_{4,3}$ and $DN_{3,4} - DN_{4,4}$ for $I_5$: the terms involving $DN_{1,3} - DN_{2,3}$, $DN_{1,4} - DN_{2,4}$ for $I_2$ and its counterpart in $I_5$ can be treated using integration by parts. To elaborate this, we see that 
$$DN_{1,3} - DN_{2,3} = \int_0 ^1 (DN)' \left (\frac{ \lambda v_1 + (1-\lambda) v_2 }{v_3} \right ) d\lambda \frac{v_1 - v_2} {v_3}.$$
However, we see that 
$$(DN)' \left (\frac{ \lambda v_1 + (1-\lambda) v_2 }{v_3} \right ) = \left ( \frac{d}{du} (DN) \right )  \left (\frac{ \lambda v_1 + (1-\lambda) v_2 }{v_3} \right ) \left [ \frac{d}{du} \left (\frac{ \lambda v_1 + (1-\lambda) v_2 }{v_3} \right ) \right ]^{-1},$$
and the term $\frac{v_1 - v_2} {v_3}  \left [ \frac{d}{du} \left (\frac{ \lambda v_1 + (1-\lambda) v_2 }{v_3} \right ) \right ]^{-1} \sim u^3$. Thus, integration by parts with respect to $u$ can resolve the loss of derivatives issue, while we can extract enough cancellation to keep the integral finite.

The remaining terms have good signs, which gives us an analog of parabolicity. 

\medskip

\noindent \underline{Estimate of $I_1$ and $I_4$} By H\"older's inequality, \eqref{weight}, \eqref{vdiffsize} and Minkowski's integral inequality, 
\begin{equation}
\begin{split}
|I_1| &\lesssim \| DN \|_{L^{2p}} ^{2p-1} \left [ \int_0 ^\infty \left | \int_0 ^1 W(u) \sum_{k=1}^4 (-1)^k  v_k^{2\beta - 1/2} (v_2^{2\beta + \frac{3}{2}} - v_1^{2\beta + \frac{3}{2}}) D(N_{2,k} N_{3,k} N_{4,k} )  \,du\right |^{2p} d\omega \right ]^{1/2p} \\
&\lesssim \| DN \|_{L^{2p}} ^{2p-1} \sum_{i=2} ^4 \sum_{k=1} ^4 \int_0 ^1 u^{-2\beta +1} v_k^{2\beta - 1/2} \| DN_{i,k} \|_{L^{2p}  } \| N \|_{L^\infty} ^2 \,du
\end{split}
\end{equation}
where $\| DN_{i,k} \|_{L^{2p}  }$ is an integral with respect to $\omega$, with $u$ as a parameter, and the implicit constant is uniform in $p$. By change of variables, we have
\begin{equation}
\| DN_{i,k} \|_{L^{2p} } = \| DN \|_{L^{2p}} (v_{k,i})^{1/2p}.
\end{equation}
Therefore, we have
\begin{equation}
|I_1| \lesssim \| DN\|_{L^{2p}}^{2p} \| N \|_{L^\infty}^2 \sum_{i=2} ^4 \sum_{k=1} ^4 \int_0 ^1 u^{-2\beta + 1} v_k^{2\beta - 1/2} (v_{k,i} )^{1/2p} \, du.
\end{equation}
Next, note that $v_{k,i} \lesssim u^{-1}$ for any $k,i$, and thus $(v_{k,i})^{1/2p} \lesssim (u^{-1} )^{1/2p} \le (u^{-1} )^{1/2p_0}$. Thus, if $p_0$ satisfies $-2\beta + 1 - 1/2p_0 > -1$, that is,
\begin{equation} \label{p0cond1}
p_0 > \frac{1}{4(1-\beta) },
\end{equation}
then
\begin{equation}
|I_1| \lesssim \| DN\|_{L^{2p} }^{2p} \| N \|_{L^\infty}^2 \int_0 ^1 u^{-2\beta+1 - \frac{1}{2p_0} } (1+u^{2\beta - 1/2} ) du \le C (p_0, \beta)\| DN\|_{L^{2p} }^{2p} \| N \|_{L^\infty}^2,
\end{equation}
where $C(p_0, \beta)$ depends on $p_0$ and $\beta$ but not on $p$ itself. $I_4$ is estimated in the exactly same manner: the only difference comes from the equivalent $|v_4^{2\beta+3/2} - v_3^{2\beta + 3/2} | \sim u^{2\beta+5/2}$ instead of $u^2$ (see  \eqref{vdiffsize}), which leads to
\begin{equation}
\begin{split}
|I_4| &\lesssim \| DN \|_{L^{2p} }^{2p} \| N \|_{L^\infty} ^2  \sum_{i=1} ^3 \sum_{k=1} ^4 \int_0 ^1 u^{3/2} v_k^{2\beta - 1/2} (v_{k,i } )^{1/2p} \, du \\
&\lesssim \| DN \|_{L^{2p} }^{2p} \| N \|_{L^\infty} ^2 \int_0^1 u^{3/2 - \frac{1}{2p_0} } (1+u^{2\beta - 1/2} ) \, du \le C(p_0, \beta) \| DN \|_{L^{2p} }^{2p} \| N \|_{L^\infty} ^2
\end{split}
\end{equation}
provided that $2\beta + 1 - \frac{1}{2p_0} > -1$, that is,
\begin{equation} \label{p0cond2}
p_0 > \frac{1}{4(1+\beta) }.
\end{equation}

\medskip

\noindent \underline{Estimate of $I_3$ and $I_6$.} The idea is the same as for $I_1$ and $I_4$: for $I_3$, we use \eqref{Ndiffsize1} and for $I_6$ we use \eqref{Ndiffsize2}, together with \eqref{visizes} to obtain
\begin{equation}
\begin{split}
|I_3| &\lesssim  \| DN \|_{L^{2p}}^{2p} \| N \|_{L^\infty} \| DN \|_{L^\infty} \int_0 ^1 u^{-2\beta - 1 - \frac{1}{2p_0} } u^2 (1+u^{2\beta - 1/2} ) du,\\
|I_6| &\lesssim  \|DN \|_{L^{2p} }^{2p} \| N \|_{L^\infty} \| DN \|_{L^\infty} \int_0 ^1 u^{-2\beta - 1 - \frac{1}{2p_0} } u^{2\beta + 3/2} u (1+u^{2\beta - 1/2} ) du
\end{split}
\end{equation}
which is precisely the integrals we had to bound when estimating $I_1$ and $I_4$. Therefore,\begin{equation}
|I_3| + |I_6| \lesssim \| DN\|_{L^{2p} }^{2p} \| N \|_{L^\infty} \| DN \|_{L^\infty}
\end{equation}
if $p_0$ satisfies \eqref{p0cond1} and \eqref{p0cond2}. 

\medskip

\noindent \underline{Further splitting of the main terms, $I_2$ and $I_5$.} We can write
\begin{equation}
\begin{split}
I_2 &= I_{2,1} + I_{2,2}, \qquad I_5 = I_{5,1} + I_{5,2}, \\
I_{2,1} &= \int_0 ^\infty \int_0 ^1 (DN)^{2p-1} (\omega) W(u) \sum_{k=1} ^2 (-1)^k v_k^{2\beta - 1/2} v_2^{2\beta+3/2} (DN_{1,k} - DN_{2,k}) N_{3,k} N_{4,k} \, du \, d\omega, \\
I_{2,2} &= \int_0 ^\infty \int_0 ^1 (DN)^{2p-1} (\omega) W(u) \sum_{k=3} ^4 (-1)^k v_k^{2\beta - 1/2} v_2^{2\beta+3/2} (DN_{1,k} - DN_{2,k}) N_{3,k} N_{4,k} \, du \, d\omega, \\
I_{5,1} &= \int_0 ^\infty \int_0 ^1 (DN)^{2p-1} (\omega) W(u) \sum_{k=1} ^2 (-1)^k v_k^{2\beta - 1/2} v_4^{2\beta+3/2} (DN_{3,k} - DN_{4,k} ) N_{1,k} N_{2,k} \, du \, d\omega, \\
I_{5,2} &= \int_0 ^\infty \int_0 ^1 (DN)^{2p-1} (\omega) W(u) \sum_{k=3} ^4 (-1)^k v_k^{2\beta - 1/2} v_4^{2\beta+3/2} (DN_{3,k} - DN_{4,k} ) N_{1,k} N_{2,k} \, du \, d\omega.
\end{split}
\end{equation}

\medskip

\noindent \underline{Estimate of $I_{2,2}$ and $I_{5,1}$.} It is achieved through integration by parts. First, we see that if $u>0$ is sufficiently away from $0$, then an estimate is easily obtained. Indeed, for $\eta \in (0,1)$ to be chosen later, and for $u \in [\eta, 1]$,
\begin{equation}
\left |W(u) \sum_{k=3}^4 (-1)^k v_k ^{2\beta - 1/2} v_2^{2\beta+3/2} (DN_{1,k} - DN_{2,k} ) N_{3,k} N_{4,k} \right | \le C(\eta) \|N\|_{L^\infty}^2 (|DN_{1,k}| + |DN_{2,k} |),
\end{equation}
with a similar bound for $I_{5,1}$. Applying the previous argument (H\"older's inequality, Minkowski's integral inequality, and change of variables for $DN_{1,k}, DN_{2,k} $) gives that the portion of $I_{2,2}$ and $I_{5,1}$ corresponding to $u \in [\eta, 1]$ is bounded by
\begin{equation}
C(\eta, \beta, p_0) \| N \|_{L^\infty}^2 \| DN \|_{L^{2p} }^{2p}.
\end{equation}
Again, the coefficients may depend on $\eta, \beta, p_0$, but they are uniform in $p$. Next, we consider the domain $u \in [0, \eta]$. By the Newton-Leibniz formula,
\begin{equation}
\begin{split}
DN_{1,k} - DN_{2,k} &= \int_0 ^1 \frac{\omega (v_1 - v_2) }{v_k} (DN)' \left ( \frac{\omega}{v_k} (\lambda v_1 + (1-\lambda) v_2 \right ) d \lambda \\
&=\frac{v_1 - v_2}{v_k} \int_0^1 \frac{\frac{d}{du} \left [ DN\left ( \frac{\omega}{v_k} (\lambda v_1 + (1-\lambda) v_2 \right ) \right ]  }{\frac{d}{du} \left [\frac{\lambda v_1 + (1-\lambda) v_2}{v_k } \right ]} d \lambda,
\end{split}
\end{equation}
so that, after integration by parts in $u$, the portion of $I_{2,2}$ corresponding to $u \in [0, \eta]$ can be written as
\begin{equation} \label{I22IbP}
\begin{split}
I_{2,2} &= \int_0 ^\infty d \omega  (DN)^{2p-1} (\omega) \sum_{k=3}^4 \int_0^1 d \lambda \frac{W(u)  (-1)^k v_k^{2\beta - 1/2} v_2^{2\beta+3/2} }{\frac{d}{du} \left [\frac{\lambda v_1 + (1-\lambda) v_2}{v_k } \right ]} \frac{v_1 - v_2} {v_k} N_{3,k} N_{4,k}  \\
& \qquad \left. \times \left [ DN\left ( \frac{\omega}{v_k} (\lambda v_1 + (1-\lambda) v_2 \right ) \right ] \right |_{u=0} ^{u=\eta} \\
&-\int_0 ^\infty d \omega  (DN)^{2p-1} (\omega) \sum_{k=3}^4 \int_0^1 d \lambda \int_0 ^\eta \frac{d}{du} \left [ \frac{W(u)  (-1)^k v_k^{2\beta - 1/2} v_2^{2\beta+3/2} }{\frac{d}{du} \left [\frac{\lambda v_1 + (1-\lambda) v_2}{v_k } \right ]} \frac{v_1 - v_2} {v_k} N_{3,k} N_{4,k} \right ] \\
& \qquad \times \left [ DN\left ( \frac{\omega}{v_k} (\lambda v_1 + (1-\lambda) v_2 \right ) \right ] du.
\end{split}
\end{equation}
We first investigate the term $\frac{d}{du} \left [ \frac{\lambda v_1 + (1-\lambda) v_2}{v_k} \right ]$. Since $\frac{\lambda v_1 + (1-\lambda) v_2}{v_k}$, $k=3,4$ can be written as
\begin{equation}
u^{-1} (\lambda A_1 (u) + (1-\lambda) A_2 (u) ) =: u^{-1} A_\lambda (u),
\end{equation}
where $A_1, A_2$ are analytic functions in $[0,1]$, satisfying 
\begin{equation} \label{Aders}
A_i (u) - u A_i ' (u), A_i (u) - u A_i '(u) + \frac{1}{2} u^2 A_i ''(u)  \in \left (\frac{1}{2}, 2 \right ) , u \in [0, \eta], i= 1, 2,
\end{equation}
(here we choose $\eta>0$ sufficiently small), therefore the same bounds hold for $A_\lambda$ for all $\lambda \in [0,1]$ and $u \in [0, \eta]$. It is clear that $\eta>0$ is universal and independent of $\lambda$ and $p$. Now we see that $\frac{v_k} {\lambda v_1 + (1-\lambda) v_2 } \lesssim u$ uniformly in $\lambda$, and

\begin{equation}
\begin{split}
\frac{d}{du} \left [ \frac{\lambda v_1 + (1-\lambda) v_2}{v_k} \right ] &= -u^{-2} ( A_\lambda(u) - u A_\lambda ' (u) ), \\
\frac{d^2}{du^2} \left [ \frac{\lambda v_1 + (1-\lambda) v_2}{v_k} \right ] &= 2u^{-3} \left (A_\lambda(u) - u A_\lambda' (u) + \frac{1}{2} u^2 A_\lambda ''(u) \right),
\end{split}
\end{equation}
and therefore 
\begin{equation}
\left | \frac{d}{du} \left [ \frac{\lambda v_1 + (1-\lambda) v_2}{v_k} \right ] \right | \sim u^{-2}, \quad \left | \frac{d^2}{du^2} \left [ \frac{\lambda v_1 + (1-\lambda) v_2}{v_k} \right ] \right | \sim u^{-3}.
\end{equation}
Next, we investigate the size of the other terms: for $k=3, 4$
\begin{equation}
  W(u) (-1)^{k} v_{k} ^{2\beta - 3/2} v_2^{2\beta+3/2} (v_1 - v_2) = u^{-2\beta-1} 1^{2\beta+3/2} u^2 u^{2\beta -3/2} C_{k}(u) = u^{-1/2} C_{k}(u) ,
\end{equation}
where $C_{k}(u)$ is an analytic function which are bounded away from 0 in $u \in [0, \eta]$ (we can adjust $\eta>0$ to satisfy this). Therefore, we have, for $k=3, 4$,
\begin{equation}
\begin{split}
&\left |  W(u) (-1)^{k} v_{k} ^{2\beta - 3/2} v_2^{2\beta+3/2} (v_1 - v_2) \right | \lesssim  u^{-1/2}, \\
&\left | \frac{d}{du}  W(u) (-1)^{k} v_{k} ^{2\beta - 3/2} v_2^{2\beta+3/2} (v_1 - v_2) \right | \lesssim u^{-3/2}.
\end{split}
\end{equation}

Again, since $C_{k}$, $k=3,4$, is independent of $p$, the estimates are uniform in $p$. Now we estimate the first term of \eqref{I22IbP}. Assuming that $N, DN \in L^\infty$, since 
$$
\left |\sum_{k=3} ^4 \frac{W(u)  (-1)^k v_k^{2\beta - 1/2} v_2^{2\beta+3/2} }{\frac{d}{du} \left [\frac{\lambda v_1 + (1-\lambda) v_2}{v_k } \right ]} \frac{v_1 - v_2} {v_k} \right |\lesssim u^{3/2},
$$
the contribution for $u=0$ vanishes. The contribution for $u = \eta$ can be estimated in the usual way: H\"older inequality, Minkowski integral inequality (this time we switch $d\lambda$ and $d\omega$), and change of variables. Then we have the bound
\begin{equation}
C(\eta, \beta) \|DN \|_{L^{2p}} ^{2p} \| N\|_{L^\infty}^2
\end{equation}
for the first term of \eqref{I22IbP}. For the second term of \eqref{I22IbP}, we see that, if $k=3,4$,
\begin{equation}
\begin{split}
&\left | \left [ \frac{W(u)  (-1)^k v_k^{2\beta - 1/2} v_2^{2\beta+3/2} }{\frac{d}{du} \left [\frac{\lambda v_1 + (1-\lambda) v_2}{v_k } \right ]} \frac{v_1 - v_2} {v_k}  \right ] \right | \lesssim u^{3/2}, \\
&\left | \frac{d}{du} \left [ \frac{W(u)  (-1)^k v_k^{2\beta - 1/2} v_2^{2\beta+3/2} }{\frac{d}{du} \left [\frac{\lambda v_1 + (1-\lambda) v_2}{v_k } \right ]} \frac{v_1 - v_2} {v_k}  \right ] \right | \lesssim  u^{1/2}, \\
\end{split}
\end{equation}
and
\begin{equation}
\frac{d}{du} N_{i,k} = DN_{i,k} \frac{\frac{d}{du} (v_i/v_k) }{v_i/v_k}.
\end{equation}
If $i,k \in \{3,4\}$, we immediately see that
\begin{equation} \label{vivk34}
\left | \frac{\frac{d}{du} (v_i/v_k) }{v_i/v_k} \right |= \left |\frac{d}{du} \log (v_i/v_k) \right | \lesssim 1 ,
\end{equation}

Putting these estimates together, and using H\"older inequality, Minkowski integral inequality, and change of variables, the second term of \eqref{I22IbP} can be bounded by
\begin{equation}
C(\beta) \| DN \|_{L^{2p}}^{2p} \sum_{k=3,4} \int_0^\eta du \int_0 ^1 d\lambda \| N \|_{L^\infty} (\| N \|_{L^\infty} + \| DN \|_{L^\infty} ) u^{1/2}  \left ( \frac{v_k}{\lambda v_1 + (1-\lambda) v_2 } \right )^{\frac{1}{2p} }.
\end{equation}

 Applying the same argument as before, if $p_0$ satisfies \eqref{p0cond1}, it is bounded by
\begin{equation} \label{I22IbPbound}
C(\beta, \eta, p_0) \| DN\|_{L^{2p} }^{2p} \| N \|_{L^\infty} ( \| N \|_{L^\infty} + \| DN \|_{L^\infty})
\end{equation}
and again the coefficient is uniform in $p$. 

The term $I_{5,1}$ is estimated in a similar manner: again we split the domain into the regions $u \in [0, \eta]$ and $u \in [\eta, 1]$, and  the region $u\in [\eta, 1]$ is treated in exactly the same manner. Instead of \eqref{vivk34}, we have the following: if $i, k \in \{1,2\}$,
\begin{equation} \label{vivk12}
\left | \frac{\frac{d}{du} (v_i/v_k) }{v_i/v_k} \right | \lesssim u.
\end{equation}

For the region $u \in[0, \eta]$, we note that $\frac{\lambda v_3 + (1-\lambda) v_4}{v_k}$, $k=1,2$ can be written as 
\begin{equation}
u (\lambda B_1 (u) + (1-\lambda) B_2 (u) ) =: u B_\lambda (u),
\end{equation}
and the counterpart of \eqref{Aders} for $B_i$s hold for sufficiently small $\eta$ as well. Thus, we have
\begin{equation}
\left| \frac{\lambda v_3 + (1-\lambda) v_4}{v_k} \right | \sim u, \quad\left | \frac{d}{du} \left [ \frac{\lambda v_3 + (1-\lambda) v_4}{v_k} \right ] \right | \sim 1, \quad \left | \frac{d^2}{du^2} \left [ \frac{\lambda v_3 + (1-\lambda) v_4}{v_k} \right ] \right | \lesssim 1
\end{equation}
uniformly in $\lambda$.
Also, for $k=1, 2$
\begin{equation}
 W(u) (-1)^k v_k^{2\beta - 3/2} v_4^{2\beta+3/2} (v_3 - v_4) = u^{-2\beta - 1} u^{2\beta+3/2} u^2 (1^{2\beta - 3/2} C_{k}(u) ) = u^{5/2} C_{k} (u) 
\end{equation}
%\comment{PG: Get rid of the sum over $k$ above? And $D_k$ instead of $D_1$? {\color{red}Joonhyun: Same comment to the case $k=3,4$: probably my notation here is confusing.}}
%\comment{\color{red}Joonhyun: I have introduced new notation $C_{1,2}$ to indicate that this covers the case $k=1,2$. How does this look like?}
for an analytic function $C_{k}(u)$ which is bounded away from $0$ in $u \in [0, \eta]$, and we have, if $k=1,2$, 
\begin{equation}
\begin{split}
& \left | \left [ \frac{W(u) (-1)^k v_k^{2\beta - 1/2} v_4^{2\beta+3/2} }{\frac{d}{du} \left [ \frac{\lambda v_3 + (1-\lambda) v_4 }{v_k } \right ] } \frac{v_3 - v_4}{v_k} \right ] \right | \lesssim u^{5/2} , \\
& \left | \frac{d}{du} \left [ \frac{W(u) (-1)^k v_k^{2\beta - 1/2} v_4^{2\beta+3/2} }{\frac{d}{du} \left [ \frac{\lambda v_3 + (1-\lambda) v_4 }{v_k } \right ] } \frac{v_3 - v_4}{v_k} \right ] \right | \lesssim u^{3/2} .
\end{split}
\end{equation}
Then, the same arguments as for $I_{2,2}$ gives the same bound as in \eqref{I22IbPbound}. 

\medskip

\noindent \underline{Estimate of $I_{2,1}$ and $I_{5,2}$.} It turns out that these terms involve higher derivatives in an essential way, but they have a good sign, which can be interpreted as parabolicity. We show the estimates for $I_{5,2}$, the case $I_{2,1}$ being essentially identical. We write
\begin{equation}
\begin{split}
I_{5,2} &= \int_0 ^\infty \int_0 ^1 (DN)^{2p-1} (\omega) W(u) v_4^{2\beta-1/2} v_4^{2\beta+3/2} (DN_{3,4} - DN_{4,4} ) N_{1,4} N_{2,4} \,du \,d\omega \\
&- \int_0 ^\infty \int_0 ^1 (DN)^{2p-1} (\omega) W(u) v_3^{2\beta - 1/2} v_4^{2\beta+3/2} (DN_{3,3} - DN_{4,3}) N_{1,3} N_{2,3} \,du \, d\omega.
\end{split}
\end{equation}
We apply the change of variables
\begin{equation}
\overline{\omega} = \omega \frac{v_4}{v_3},
\end{equation}
to the second integral, which leads to 
\begin{equation}
\begin{split}
& I_{5,2} = \int_0 ^\infty \int_0 ^1 (DN)^{2p-1} (\omega) W(u) v_4^{2\beta-1/2} v_4^{2\beta+3/2} (DN_{3,4} - DN_{4,4} ) N_{1,4} N_{2,4} \,du \,d\omega \\
& \qquad \qquad - \int_0 ^\infty \int_0 ^1 (DN_{3,4} )^{2p-1}  W(u) v_3^{2\beta - 1/2} v_4^{2\beta+3/2} (DN_{3,4} - DN_{4,4}) N_{1,4} N_{2,4} \,du \frac{v_3}{v_4}\,d\omega \\
&=\int_0 ^\infty \int_0 ^1 \left [ (DN_{4,4})^{2p-1}  - (DN_{3,4} )^{2p-1} \right ] W(u) v_4^{2\beta-1/2} v_4^{2\beta+3/2} (DN_{3,4} - DN_{4,4} ) N_{1,4} N_{2,4} \,du\, d\omega \\
& - \int_0 ^\infty \int_0^1 (DN_{3,4})^{2p-1} W(u) (v_3^{2\beta+1/2}v_4^{2\beta+1/2} - v_4^{2\beta+1/2} v_4^{2\beta+1/2} ) (DN_{3,4} - DN_{4,4} ) N_{1,4} N_{2,4} \,du \,d\omega \\
&= - \int_0 ^\infty \int_0^1 \left [ (DN_{4,4})^{2p-1}  - (DN_{3,4} )^{2p-1} \right ] \left [ DN_{4,4} - DN_{3,4} \right ] W(u) v_4^{4\beta +1} N_{1,4} N_{2,4} \,du \,d\omega \\
&- \int_0^\infty \int_0^1 (DN(\omega))^{2p-1} (DN_{3,3} - DN_{4,3} ) N_{1,3} N_{2,3} W(u) v_4^{2\beta+1/2} ( v_3^{2\beta+1/2} - v_4^{2\beta+1/2} ) \,du\, d\omega,
\end{split}
\end{equation}
where we changed the integration variable once more in the last line above.

The first term is negative, due to nonnegativity of $N$ and monotonicity of $x \rightarrow x^{2p-1}$. The second term is controlled in the usual way: Holder, Minkowski integral inequality, and change of variables. The computation is essentially same as $I_4$. To summarize, we have the following:

\begin{lemma} \label{Lipschitz_apriori}
For every positive integer $p>p_0$, we have
\begin{equation} \label{parabolic_estimate}
\begin{split}
&\frac{d}{dt} \| DN \|_{L^{2p}} + \|DN\|_{L^{2p} }^{-(2p-1)}\int_0 ^\infty \int_0^1 \left [ (DN_{4,4})^{2p-1}  - (DN_{3,4} )^{2p-1} \right ] \\ & \qquad \qquad \qquad \qquad \qquad \left [ DN_{4,4} - DN_{3,4} \right ] W(u) v_4^{2\beta - 1/2} v_4^{2\beta+3/2} N_{1,4} N_{2,4} \, du \, d\omega \\
&+ \|DN\|_{L^{2p} }^{-(2p-1)}\int_0 ^\infty \int_0^1 \left [ (DN_{2,2})^{2p-1}  - (DN_{1,2} )^{2p-1} \right ] \\
& \qquad \qquad \qquad \qquad \qquad \left [ DN_{2,2} - DN_{1,2} \right ] W(u) v_2^{2\beta - 1/2} v_2^{2\beta+3/2} N_{3,2} N_{4,2} \, du \, d\omega\\
&\le \mathbf{C}(\beta, p_0) \| DN \|_{L^{2p} }  \| N \|_{L^\infty} (\| N \|_{L^\infty} + \| DN \|_{L^\infty} ).
\end{split}
\end{equation}
Here, $\mathbf{C}(\beta, p_0)$ depends only on $\beta$ and $p_0$ and in particular uniform in $p$. 
\end{lemma}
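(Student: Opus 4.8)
The plan is to run a weighted energy estimate on $DN$ at the $L^{2p}$ level, keeping the representation \eqref{Neqn} in the form in which the sum over $k$ is retained. First I would write $\|DN\|_{L^{2p}}^{2p-1}\tfrac{d}{dt}\|DN\|_{L^{2p}} = \int_0^\infty (DN)^{2p-1}(\omega)\,D(\overline{\mathcal{C}}(N))(\omega)\,d\omega$ and, inside each $k$-term, split $N_{2,k}N_{3,k}N_{4,k}$ off from $N_{1,k}N_{3,k}N_{4,k}$ by Newton--Leibniz, and similarly separate the weights $v_1^{2\beta+3/2}$ from $v_2^{2\beta+3/2}$ and $v_3^{2\beta+3/2}$ from $v_4^{2\beta+3/2}$. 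This produces the six pieces $I_1,\dots,I_6$: the weight-difference terms $I_1,I_4$; the terms $I_3,I_6$ carrying an $N$-difference and an outer $D$; and the genuinely dangerous $DN$-difference terms $I_2,I_5$.

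Second, I would dispatch $I_1,I_3,I_4,I_6$ by the routine of H\"older's inequality (extracting $\|DN\|_{L^{2p}}^{2p-1}$ in $\omega$), Minkowski's integral inequality (bringing the $L^{2p}$ norm inside the $u$-integral), and the change of variables $\omega\mapsto \omega v_i/v_k$, which replaces $\|DN_{i,k}\|_{L^{2p}}$ by $\|DN\|_{L^{2p}}(v_{k,i})^{1/2p}$ with $(v_{k,i})^{1/2p}\le(v_{k,i})^{1/2p_0}\lesssim u^{-1/2p_0}$. Together with \eqref{visizes}, \eqref{vdiffsize}, \eqref{Ndiffsize1}, \eqref{Ndiffsize2} and $W\sim u^{-2\beta-1}$ from \eqref{weight}, each of these reduces to a one-dimensional integral in $u$ near $0$ whose convergence is precisely \eqref{p0cond1}--\eqref{p0cond2}; each is thus $\lesssim \|DN\|_{L^{2p}}^{2p}\|N\|_{L^\infty}(\|N\|_{L^\infty}+\|DN\|_{L^\infty})$ with a constant independent of $p$.

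Third, and this is the core of the argument, I would split $I_2=I_{2,1}+I_{2,2}$ and $I_5=I_{5,1}+I_{5,2}$ according to whether $k\in\{1,2\}$ or $k\in\{3,4\}$. For the off-diagonal pieces $I_{2,2}$ ($k\in\{3,4\}$) and $I_{5,1}$ ($k\in\{1,2\}$) the naive bound loses a derivative, so instead I would use Newton--Leibniz to write $DN_{1,k}-DN_{2,k}$ as $\tfrac{v_1-v_2}{v_k}\int_0^1 \big(\tfrac{d}{du}[DN(\tfrac{\omega}{v_k}(\lambda v_1+(1-\lambda)v_2))]\big)\big/\big(\tfrac{d}{du}[\tfrac{\lambda v_1+(1-\lambda)v_2}{v_k}]\big)\,d\lambda$ and integrate by parts in $u$ on $[0,\eta]$ (the piece $u\in[\eta,1]$ being elementary). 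The key gain is that the prefactor $\tfrac{v_1-v_2}{v_k}\big/\tfrac{d}{du}[\tfrac{\lambda v_1+(1-\lambda)v_2}{v_k}]$ is $\sim u^{3}$ near $0$, a consequence of the analyticity estimates \eqref{Aders}, which also give $\tfrac{d}{du}[\tfrac{\lambda v_1+(1-\lambda)v_2}{v_k}]\sim u^{-2}$ without vanishing on $[0,\eta]$. Hence the boundary term at $u=0$ vanishes, the one at $u=\eta$ is harmless, and after the $u$-derivative lands on the prefactor the integrand still carries $u^{1/2}$ (the logarithmic derivatives $\tfrac{d}{du}(v_i/v_k)/(v_i/v_k)$ are $O(1)$ for $i,k\in\{3,4\}$ and $O(u)$ for $i,k\in\{1,2\}$), leaving a convergent $u$-integral under \eqref{p0cond1}--\eqref{p0cond2}; compare the structure displayed in \eqref{I22IbP}.

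Fourth, for the diagonal pieces $I_{2,1}$ ($k\in\{1,2\}$) and $I_{5,2}$ ($k\in\{3,4\}$) I would extract a sign. Taking $I_{5,2}$, change variables $\overline{\omega}=\omega v_4/v_3$ in the $k=3$ contribution so that both contributions become integrals of $DN$ at the common pair of points $\omega v_3/v_4$ and $\omega$; up to a weight-mismatch factor $v_3^{2\beta+1/2}-v_4^{2\beta+1/2}$ (estimated exactly as $I_4$), the sum collapses to $-\int_0^\infty\!\int_0^1 [(DN_{4,4})^{2p-1}-(DN_{3,4})^{2p-1}][DN_{4,4}-DN_{3,4}]\,W(u)v_4^{4\beta+1}N_{1,4}N_{2,4}\,du\,d\omega$, which is $\le 0$ since $x\mapsto x^{2p-1}$ is nondecreasing and $N\ge 0$ by Lemma~\ref{positivitylemma}. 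Moving this term and its $I_{2,1}$-analogue to the left produces exactly the two parabolic terms in \eqref{parabolic_estimate}; collecting everything and dividing by $\|DN\|_{L^{2p}}^{2p-1}$ gives the claim, with $\mathbf{C}(\beta,p_0)$ independent of $p$ because every constant above arose from $p$-independent objects ($W$, the $A_i$, $B_i$, $C_k$) and the change-of-variables loss was absorbed uniformly via $(v_{k,i})^{1/2p}\le(v_{k,i})^{1/2p_0}$. The main obstacle is the third step: making the integration by parts for $I_{2,2}$ and $I_{5,1}$ rigorous, i.e. verifying the $u^3$ gain and the non-degeneracy of $\tfrac{d}{du}[\tfrac{\lambda v_1+(1-\lambda)v_2}{v_k}]$ on $[0,\eta]$, the vanishing of the $u=0$ boundary term, and the $O(1)$/$O(u)$ bounds on the logarithmic derivatives after differentiating the prefactor; everything else is bookkeeping with the explicit $v_i$.
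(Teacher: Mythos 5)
Your proposal is correct and follows essentially the same route as the paper: the same six-way decomposition $I_1,\dots,I_6$, the H\"older--Minkowski--change-of-variables treatment of $I_1,I_3,I_4,I_6$ with the $p$-uniform loss $(v_{k,i})^{1/2p}\le(v_{k,i})^{1/2p_0}$, the integration by parts in $u$ on $[0,\eta]$ for $I_{2,2}$ and $I_{5,1}$ using the $u^3$ gain and the non-degeneracy of $\tfrac{d}{du}\bigl[\tfrac{\lambda v_1+(1-\lambda)v_2}{v_k}\bigr]$, and the change of variables $\overline{\omega}=\omega v_4/v_3$ extracting the good-sign parabolic terms from $I_{2,1}$ and $I_{5,2}$. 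The remaining verifications you flag (the estimates \eqref{Aders}, \eqref{vivk34}, \eqref{vivk12} and the $u^{3/2}$, $u^{1/2}$ bounds on the prefactor and its derivative) are exactly the computations carried out in the paper, so no gap remains.
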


Combining with \eqref{LinftyN}, for every $p > p_0$ we have
\begin{equation}
\frac{d}{dt} \left ( \| N \|_{L^\infty} + \| DN \|_{L^{2p} } \right ) \le \mathbf{C} (\beta, p_0) (\| N \|_{L^\infty} + \| DN \|_{L^\infty } ) \| N \|_{L^\infty} \left ( \| N \|_{L^\infty} + \| DN \|_{L^{2p} } \right ),
\end{equation}
and thus
\begin{equation}
\begin{split}
&\| N (t) \|_{L^\infty} + \| DN (t) \|_{L^{2p } }  \\
& \qquad \le \left ( \| N (0) \|_{L^\infty} + \| DN (0) \|_{L^{2p } } \right ) \exp \left ( \int_0 ^t \mathbf{C} (\beta, p_0) (\| N(s) \|_{L^\infty} + \| DN(s) \|_{L^\infty } ) \| N (s) \|_{L^\infty} ds\right )
\end{split}
\end{equation}
and by taking the limit $p \rightarrow \infty$, 
\begin{equation}
\begin{split}
\| N (t) \|_{L^\infty} + \| DN (t) \|_{L^\infty} &\le \left ( \| N_0 \|_{L^\infty}  + \| DN_0 \|_{L^\infty} \right ) \\
&\times \exp \left ( \int_0 ^t \mathbf{C} (\beta, p_0) (\| N(s) \|_{L^\infty} + \| DN(s) \|_{L^\infty } ) \| N (s) \|_{L^\infty} ds\right )
\end{split}
\end{equation}
and therefore we can close the a priori estimate. Also, if we define $F(t)$ by
\begin{equation}
F(t) = \left ( \| N_0 \|_{L^\infty}  + \| DN_0 \|_{L^\infty} \right ) \exp  \left ( \int_0 ^t \mathbf{C} (\beta, p_0) F(s)^2 ds  \right )
\end{equation}
then $F(t) $ is a supersolution to $\| N(t) \|_{L^\infty} + \| DN(t) \|_{L^\infty}$, and satisfies 
\begin{equation}
F' (t) = \mathbf{C} (\beta, p_0) F(t) ^3
\end{equation}
whose solution exists for a positive time $T = T(\beta, p_0, N_0 )$:
\begin{equation}
\sup_{t \in (0, T) } \| DN (t) \|_{L^\infty}  \le \mathbf{C} (\beta, p_0, N_0 ),
\end{equation}
%Moreover, the last two terms in the left hand side of \eqref{parabolic_estimate} can be regarded as the second derivatives of $N$: in some sense, we gain one more derivative from initial data.
and for every positive integer $p\ge p_0$
\begin{equation} \label{smoothingterm}
\begin{split}
&\int_0 ^{T} \int_0 ^\infty \int_0 ^1 \left [ (DN _{4,4})^{2p-1}  - (DN_{3,4} )^{2p-1} \right ] \left [ DN_{4,4} - DN_{3,4} \right ] W(u) v_4^{2\beta - 1/2} v_4^{2\beta+3/2} N_{1,4} N_{2,4} du d\omega dt \\
& + \int_0 ^{T} \int_0 ^\infty \int_0^1 \left [ (DN_{2,2})^{2p-1}  - (DN_{1,2} )^{2p-1} \right ] \left [ DN_{2,2} - DN_{1,2} \right ] W(u) v_2^{2\beta - 1/2} v_2^{2\beta+3/2} N_{3,2} N_{4,2} du d\omega dt \\
&\le C(N_0, p, \beta, p_0) < \infty. 
\end{split}
\end{equation}
The positivity and the following change of variable simplifies \eqref{smoothingterm}: we set
$$v = \frac{u^2}{1+u+u^2}.$$
Then $v$ is an increasing function for $u \in [0, 1]$, $v \in [0,1/3]$, $DN_{1,2} = DN (\omega (1-v))$, $du = \frac{(1+u+u^2)^2}{u(2+u)} dv$, and $u^2 \sim v$. Thus, we have
\begin{equation} \label{Gainsimplified1}
\begin{split}
\int_0 ^{T} \int_0 ^\infty \int_0 ^1 & | DN(\omega) - DN (\omega(1+u) ) |^2  u^{2\beta}  \\
\times &\left [\int_0 ^1 | \lambda DN (\omega) + (1-\lambda) DN (\omega (1+u) ) |^{2(p-1) } d\lambda \right]  du d\omega dt \\
 + \int_0 ^{T} \int_0 ^\infty \int_0 ^{1/3} & | DN(\omega) - DN (\omega(1-u) ) |^2   u^{-(\beta+1) }  \\
\times &\left [\int_0 ^1 | \lambda DN (\omega) + (1-\lambda) DN (\omega (1+u) ) |^{2(p-1) } d\lambda \right]  du d\omega dt \\
&\le C(N_0, p, \beta, p_0) < \infty, \text{ for every positive integer } p \ge p_0.
\end{split} 
\end{equation}
In particular, for $\beta \in (-3/4, 3/4)$, by choosing $p=1$, we obtain
 \begin{equation} \label{Gainsimplified2}
 \begin{split}
& \int_0 ^{T} \int_0 ^\infty \int_{-1/3} ^{1 }  | DN(\omega) - DN(\omega(1+u) )|^2 \left ( \mathbf{1}_{[-1/3, 0]} (u) |u|^{-(\beta+1)} + \mathbf{1}_{[0, 1]} (u) |u|^{2\beta} \right )  du d\omega dt \\
& \qquad \le C(N_0, \beta, p_0)< \infty.
\end{split}
\end{equation}
Finally, letting $\omega' = \omega (1+u)$, and symmetrizing the norm of \eqref{Gainsimplified2} in $\omega$ and $\omega'$, it is equivalent to \eqref{fractionalSobolevnorm} and we obtain \eqref{gainofregularityestimate}.

To summarize, we have the following a priori estimate:
\begin{lemma}
There exists a time $T = T(N_0, p_0, \beta)$ and constant $\mathbf{C} (\beta, p_0, N_0)$ such that 
\begin{equation}
\| N \|_{L^\infty (0, T; X) } \le \mathbf{C}(\beta, p_0, N_0 ) .
\end{equation}
Moreover, we have \eqref{Gainsimplified1} and for $\beta \in (-3/4, 3/4)$, we have \eqref{gainofregularityestimate}.
\end{lemma}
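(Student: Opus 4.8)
The plan is to assemble the three a priori inputs already derived in Sections~\ref{Nonnegativity}, \ref{Linfty} and \ref{Lipschitz}: the uniform lower bound of Lemma~\ref{positivitylemma} (which in particular furnishes the nonnegativity of $N$ needed for the sign of the gain terms), the $L^\infty$ differential inequality \eqref{LinftyN}, and the uniform-in-$p$ parabolic estimate \eqref{parabolic_estimate} of Lemma~\ref{Lipschitz_apriori}. Write $G_p(t) = \|N(t)\|_{L^\infty} + \|DN(t)\|_{L^{2p}}$. Adding \eqref{LinftyN} to \eqref{parabolic_estimate} and discarding the two nonnegative ``gain'' integrals on the left-hand side of the latter, one obtains the closed differential inequality
\begin{equation*}
\frac{d}{dt} G_p(t) \le \mathbf{C}(\beta,p_0)\,\big(\|N(t)\|_{L^\infty} + \|DN(t)\|_{L^\infty}\big)\,\|N(t)\|_{L^\infty}\,G_p(t),
\end{equation*}
valid for every integer $p \ge p_0$, with a constant that does \emph{not} depend on $p$ (this uniformity being the whole point of Lemma~\ref{Lipschitz_apriori}). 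Assumption (A2) guarantees $D\partial_t N \in L^1(0,T;L^{2p})$ for all such $p$, so that $t \mapsto \|DN(t)\|_{L^{2p}}$ is absolutely continuous and the chain rule used to produce \eqref{parabolic_estimate} is legitimate.

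Next I would integrate this inequality by Gr\"onwall's lemma and then send $p \to \infty$. Since $DN \in L^{2p_0} \cap L^\infty$, interpolation gives $DN(t) \in L^{2p}$ for every $p \ge p_0$ and $\|DN(t)\|_{L^{2p}} \to \|DN(t)\|_{L^\infty}$ as $p \to \infty$; because the constant above is $p$-independent, the limit yields
\begin{equation*}
\|N(t)\|_{L^\infty} + \|DN(t)\|_{L^\infty} \le \big(\|N_0\|_{L^\infty} + \|DN_0\|_{L^\infty}\big)\exp\Big(\int_0^t \mathbf{C}(\beta,p_0)\big(\|N(s)\|_{L^\infty} + \|DN(s)\|_{L^\infty}\big)\|N(s)\|_{L^\infty}\,ds\Big).
\end{equation*}
Comparing the quantity $g(t):=\|N(t)\|_{L^\infty} + \|DN(t)\|_{L^\infty}$ with the solution $F$ of the scalar Riccati-type ODE $F' = \mathbf{C}(\beta,p_0) F^3$, $F(0) = \|N_0\|_{L^\infty}+\|DN_0\|_{L^\infty}$ (since $g' \le \mathbf{C} g^3$, $F$ is a supersolution), which exists with a uniform bound on $[0,T]$ for $T = T(\beta,p_0,N_0) \sim (\mathbf{C} F(0)^2)^{-1} > 0$, closes the bootstrap and gives the stated bound for $\|N\|_{L^\infty(0,T;X)}$; the $L^{2p_0}$ component of the $X$-norm is recovered by running the Gr\"onwall step for $p = p_0$ itself, the conditions \eqref{p0cond1} and \eqref{p0cond2} entering Lemma~\ref{Lipschitz_apriori} already holding at $p = p_0$.

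For the gain of regularity I would return to \eqref{parabolic_estimate}, this time \emph{keeping} the two nonnegative integrals, integrate in $t$ over $[0,T]$, and bound the right-hand side using the control of $\|N\|_{L^\infty}$ and $\|DN\|_{L^{2p}}$ just obtained; this produces \eqref{smoothingterm} for every integer $p \ge p_0$. Performing the change of variables $v = u^2/(1+u+u^2)$ indicated in the text (under which $du = \tfrac{(1+u+u^2)^2}{u(2+u)}\,dv$ and $u^2 \sim v$) turns the left-hand side of \eqref{smoothingterm} into \eqref{Gainsimplified1}. When $\beta \in (-3/4,3/4)$ one may take $p = 1$ (admissible since $1 > \max(1/(4(1-\beta)),1/(4(1+\beta)))$ precisely on this range), which collapses the $\lambda$-integral and yields \eqref{Gainsimplified2}; the substitution $\omega' = \omega(1+u)$ followed by symmetrizing in $(\omega,\omega')$ then identifies the expression with the seminorm \eqref{fractionalSobolevnorm}, giving \eqref{gainofregularityestimate}.

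The main obstacle is the uniformity in $p$ of every constant, without which the passage $p \to \infty$ --- the only route to the $W^{1,\infty}$ bound needed to close the nonlinear estimate --- would fail. That uniformity is not automatic: it rests on the structural features exploited in Section~\ref{Lipschitz}, namely the integration-by-parts in $u$ that removes the apparent derivative loss in $I_{2,2}$ and $I_{5,1}$, the favorable sign of $I_{2,1}$ and $I_{5,2}$ (the source of the parabolic gain terms), and the $p$-independent bookkeeping of the change-of-variables factors $(v_{k,i})^{1/2p} \le (u^{-1})^{1/2p_0}$. Granting Lemma~\ref{Lipschitz_apriori} as stated, the remaining care in the present proof is purely the limiting argument: justifying $\|DN(t)\|_{L^{2p}} \to \|DN(t)\|_{L^\infty}$, the time-differentiability of the norms via (A2), and the existence of the supersolution $F$ on a time interval depending only on $(\beta,p_0,N_0)$.
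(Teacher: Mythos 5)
Your proposal is correct and follows essentially the same route as the paper: combine \eqref{LinftyN} with the $p$-uniform estimate \eqref{parabolic_estimate}, apply Gr\"onwall, send $p\to\infty$, close via the supersolution $F'=\mathbf{C}F^3$ to get the time $T$ and the $X$-bound, then retain the signed gain terms, integrate in time, and perform the changes of variables $v=u^2/(1+u+u^2)$ and $\omega'=\omega(1+u)$ (with $p=1$ for $\beta\in(-3/4,3/4)$) to obtain \eqref{Gainsimplified1}, \eqref{Gainsimplified2} and hence \eqref{gainofregularityestimate}. No gaps beyond those already present in the paper's own argument.
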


\subsection{Local well-posedness}
Let $p_0 > \max \left ( \frac{1}{4(1-\beta)}, \frac{1}{4(1+\beta) } \right )$ and recall that
\begin{equation} \label{Xspace}
X = \{ N \in L^\infty ((0, \infty )) | \| DN \|_{L^{2p_0} ((0, \infty  )) } + \| DN \|_{L^\infty((0, \infty ))} < \infty \}
\end{equation}
with the natural norm
\begin{equation}
\| N \|_{X} = \| N \|_{L^\infty ((0, \infty ))} + \| DN \|_{L^{2p_0} ((0, \infty  )) } + \| DN \|_{L^\infty((0, \infty ))}.
\end{equation}

\medskip

\noindent \underline{The approximation scheme.} For a $N_0 \in X, N_0 \ge 0$, we consider the following approximate problem:
\begin{equation} \label{appeqn}
\begin{split}
\partial_t N^\varepsilon (t, \omega) &= \overline{C}_\varepsilon (N^\varepsilon) (\omega) :=  \int_\varepsilon ^1  \frac{\left (\prod_{i=1} ^4 v_i \right )^{-\beta - \frac{1}{2} } }{1+u+u^2} \sum_{j,k=1} ^4  (-1)^{j+k} v_j ^{2\beta+ \frac{3}{2} } v_k^{2\beta - \frac{1}{2} } \frac{\prod_{i=1} ^4 N_{i,k} ^\varepsilon }{N_{j,k} ^\varepsilon } du, \\
N^\varepsilon (0, \omega) &= N_0 (\omega).
\end{split}
\end{equation}
The problem \eqref{appeqn} admits a unique solution $N^\varepsilon \in L^\infty (0, T_{\varepsilon} ; X)$ for some time $T_\varepsilon > 0$: this can be regarded as an ODE with values in $X$: we obviously have
\begin{equation}
\begin{split}
 \| \overline{C}_{\varepsilon} (N^{\varepsilon} ) \|_{L^\infty ((0, \infty )) } &\le C_{\varepsilon} \| N^{\varepsilon} \|_{L^\infty ((0, \infty )) } ^3 , \\
\| D \overline{C}_{\varepsilon} (N^{\varepsilon} ) \|_{L^\infty ((0, \infty )) } &\le C_{\varepsilon}  \| D N^{\varepsilon} \|_{L^\infty ((0, \infty )) }  \| N^{\varepsilon} \|_{L^\infty ((0, \infty )) } ^2,
 \end{split}
\end{equation}
and
\begin{equation}
\begin{split}
\| D \overline{C}_{\varepsilon} (N^{\varepsilon} ) \|_{L^{2p_0} ((0, \infty )) } &\le C_{\varepsilon}  \sum_{j,k = 1} ^4 \left  \| \int_{\varepsilon} ^1  D \left ( \prod_{i=1, i\ne j} ^4 N_{i,k} ^{\varepsilon} \right ) du \right \|_{L^{2p_0}((0, \infty )) } \\
&\le C_{\varepsilon, p_0} \| DN^{\varepsilon} \|_{L^{2p_0} ((0, \infty ) ) } \| N^{\varepsilon} \|_{L^\infty ((0, \infty )) } ^2.
\end{split}
\end{equation}
To sum up, we have $\| \partial_t N^{\varepsilon } \|_{X} \le C_{\varepsilon, p_0} \| N^{\varepsilon} \|_{X}^3$, so we have local well-posedness for $N^{\varepsilon}$ in $X$.

\medskip

\noindent \underline{Uniform bounds in $\epsilon$.}
The key observation is that all the estimates in Subsection \ref{Nonnegativity}, Subsection \ref{Linfty}, and Subsection \ref{Lipschitz} carry over in the exact same manner: we just replace $u \in [0,1]$ by $u \in [\varepsilon, 1]$. The only nontrivial part is the treatment of  $I_{2,2}$, $I_{5,1}$, where we have employed integration by parts, so the contribution of $u = \varepsilon$ might not be zero. However, the contribution for $u=\varepsilon$ is smaller than the contribution for $u = \eta$: in the case of $I_{2,2}$, the corresponding term is of the form $O(u^{3/2}) N^2 DN$. In particular, this implies that all $N^{\varepsilon}$ can be extended to the time $T = T(N_0)>0$ which is uniform in $\varepsilon$, and moreover we have uniform estimates
\begin{equation}
\sup_{\varepsilon \in (0, 1) } \| N^{\varepsilon} \|_{L^\infty (0, T ; X) } \lesssim 1.
\end{equation}
In addition to that, we also have that for every $p > p_0$, 
\begin{equation} \label{parabolicapprox}
\begin{split}
&\sup_{\varepsilon \in (0, 1) }\int_0 ^{T} \int_0 ^\infty \int_\varepsilon^1 \left [ (DN ^{\varepsilon}_{4,4})^{2p-1}  - (DN^{\varepsilon}_{3,4} )^{2p-1} \right ] \\
& \qquad \qquad \qquad \qquad \qquad \left [ DN^{\varepsilon}_{4,4} - DN^{\varepsilon}_{3,4} \right ] W(u) v_4^{2\beta - 1/2} v_4^{2\beta+3/2} N^{\varepsilon}_{1,4} N^{\varepsilon}_{2,4} \, du \, d\omega \, dt \\
& \qquad + \int_0 ^{T} \int_0 ^\infty \int_\varepsilon^1 \left [ (DN^{\varepsilon}_{2,2})^{2p-1}  - (DN^{\varepsilon}_{1,2} )^{2p-1} \right ]  \\
& \qquad \qquad \qquad \qquad \qquad \left [ DN^{\varepsilon}_{2,2} - DN^{\varepsilon}_{1,2} \right ] W(u) v_2^{2\beta - 1/2} v_2^{2\beta+3/2} N^{\varepsilon}_{3,2} N^{\varepsilon}_{4,2} \, du \, d\omega  \,dt \\
&\le C_{p, p_0, \beta}.
\end{split}
\end{equation}

\medskip

\noindent \underline{Convergence of a subsequence in the uniform topology.}
To apply a compactness argument: we first rely on the classical Arzela-Ascoli theorem. Fix $M >0$. We see that the collection of functions
\begin{equation}
 \{ N^{\varepsilon} (t, e^s),\, 0 \le t \le T,\, -M \le s \le M \}
\end{equation}
are uniformly bounded (by $\| N^{\varepsilon} \|_{L^\infty (0, T ; L^\infty ((0, \infty) ) }$) and uniformly equicontinuous: we have
\begin{equation}
\begin{split}
|N^{\varepsilon} (t, e^{s_2}) - N^{\varepsilon} (t, e^{s_1} )| &= \left | \int_{e^{s_1}} ^{e^{s_2} } DN^{\varepsilon} (\omega) \frac{d\omega}{\omega} \right | = \left | \int_{s_1} ^{s_2} DN^{\varepsilon} (e^s) ds \right | \\
& \le |s_2 - s_1| \| DN^{\varepsilon} \|_{L^\infty (0, T ; L^\infty ((0, \infty) ) }, \\ 
| N^{\varepsilon} (t_2, e^{s} ) - N^{\varepsilon} (t_1, e^s )  | &= \left | \int_{t_1} ^{t_2} \overline{C}_{\varepsilon} (N^{\varepsilon} (s) ) ds \right | \\
&\lesssim \| N^{\varepsilon } \|_{L^\infty (0, T ; L^\infty ) }^2  ( \| N^{\varepsilon } \|_{L^\infty (0, T ; L^\infty ) } + \| DN^{\varepsilon } \|_{L^\infty (0, T ; L^\infty ) } ) (t_2 - t_1)
\end{split}
\end{equation}
and thanks to the uniform estimates we obtain the desired conclusion. Therefore, by the Arzela-Ascoli theorem, we obtain a subsequence $\{ N^{\varepsilon_{k,M} }  \}_k$ which converges to a function $N$ uniformly in $[0, T] \times [e^{-M}, e^{M}]$, and by a diagonalization argument, we can extract a subsequence $N^{\varepsilon_k} $ which converges to $N$ uniformly on compact subsets of $[0, T] \times (0, \infty)$. In the remainder, we slightly abuse the notation and still denote this subsequence by $N^{\varepsilon}$. This also shows that $N \in L^\infty ((0, T; L^\infty (0, \infty ) )$. Also, this implies that $DN^{\varepsilon}$ converges to $DN$ in the sense of distribution in $[0, T] \times (0, \infty)$, and Banach-Alaoglu theorem applied to $L^\infty (0, T; L^\infty (0, \infty )) = L^1(0, T; L^1 (0, \infty) )^*$ and $L^\infty (0, T; L^{2p_0} (0, \infty )) = L^1 (0, T; L^{2p_0 ^*}(0, \infty ) )^*$ implies that up to subsequence $DN^{\varepsilon}$ is weak-$*$ convergent, and the limit should be therefore $DN$. Therefore, $N \in L^\infty (0, T; X)$. This also implies that $\overline{\mathcal{C} } (N) \in L^\infty(0, T; L^\infty ((0, \infty ) ) )$ and moreover, the integrand of $\overline{\mathcal{C} } (N)$ is in $L^1 ([0, T] \times [0, 1];  dt du)$ for almost every $\omega$, following the argument in \eqref{collisionpointwise}.

{\color{black} In the remainder of the argument, we shall abuse notations by denoting $N^{\epsilon}$ instead of $N^{\epsilon_k}$ the converging subsequence.} 

\medskip
\noindent
\underline{Solution in the mild sense.}
The next step is to show that $N$ is a mild solution, using the Vitali convergence theorem. Writing down \eqref{appeqn} in Duhamel form, we have
\begin{equation}
N^{\varepsilon} (t, \omega) - N^{\varepsilon} (0, \omega) = \int_0 ^t \overline{C}_{\varepsilon} (N^{\varepsilon} (s) ) (\omega) ds,
\end{equation}
and for any compact subset $ [0, T] \times C \subset [0, T] \times  (0, \infty) $ the left-hand side uniformly converges to $N(t, \omega) - N(0, \omega)$.
 For the right-hand side, we claim that for every $\omega \in (0, \infty)$, $\overline{C}_{\varepsilon} (N^{\varepsilon} (s) ) (\omega)$ converges to $\overline{C} (N(s) ) (\omega)$ in $L^1 ([0, t])$.

To show this, we first define 
\begin{equation}
\begin{split}
f^{\varepsilon} (u, s) &:= \mathbf{1}_{[\varepsilon, 1]} (u) \frac{\left (\prod_{i=1} ^4 v_i \right )^{-\beta - \frac{1}{2} } }{1+u+u^2} \sum_{j,k=1} ^4  (-1)^{j+k} v_j ^{2\beta+ \frac{3}{2} } v_k^{2\beta - \frac{1}{2} } \frac{\prod_{i=1} ^4 N_{i,k} ^\varepsilon (s) }{N_{j,k} ^\varepsilon (s) }, \\
f(u, s) &= \frac{\left (\prod_{i=1} ^4 v_i \right )^{-\beta - \frac{1}{2} } }{1+u+u^2} \sum_{j,k=1} ^4  (-1)^{j+k} v_j ^{2\beta+ \frac{3}{2} } v_k^{2\beta - \frac{1}{2} } \frac{\prod_{i=1} ^4 N_{i,k} (s)  }{N_{j,k} (s)  },
\end{split}
\end{equation}
with the measure space $([0,1] \times [0, t], du\, ds)$. We first show that $f^{\varepsilon}$ converges to $f$ in measure. Let $\zeta>0$ be an arbitrary small number. Then there exists a number $L(\zeta), U(\zeta)>0$ such that if $u \in [\zeta, 1]$, then $\omega v_{i,k} \in [L(\zeta), U(\zeta)]$ for all $\omega \in C$ and $i, k = 1,2,3, 4.$ Then for sufficiently small $\varepsilon>0$, due to uniform convergence of $N^{\varepsilon}$ on compact subsets, $f^{\varepsilon}$ converges uniformly to $f$ in $(u, s) \in [\zeta, 1] \times[0, t]$. Therefore, for sufficiently small $\varepsilon>0$, depending on $\zeta$, 
\begin{equation}
\{ (u,s) \in [0, 1] \times[0, t] | |f(u,s) - f^\varepsilon (u,s) | > \zeta \} \subset [0, \zeta] \times [0, t],
\end{equation}
which demonstrates that for each $\omega$ $f^\varepsilon$ converges to $f$ in measure in $(u,s) \in [0,1] \times [0, t]$. Next, we show that $\{ f^\varepsilon \}$ is uniformly integrable. By the same calculation as that of \eqref{collisionpointwise} we see that
\begin{equation}
\begin{split}
|f^{\varepsilon} (u,s) | &\le u^{-2\beta -1} (1+u^{2\beta - 1/2})(u^2 + u^{2\beta + 5/2} ) \| N^{\varepsilon } \|_{L^\infty (0, T ; L^\infty( (0, \infty) ))}^2 \\
& \qquad \qquad ( \| N^{\varepsilon } \|_{L^\infty (0, T ; L^\infty( (0, \infty) ))} + \| DN^{\varepsilon } \|_{L^\infty (0, T ; L^\infty( (0, \infty) ))} ),
\end{split}
\end{equation}
and by the uniform estimates we have
\begin{equation}
|f^{\varepsilon} (u,s) | \le C u^{-2\beta - 1} (1+u^{2\beta - 1/2} ) (u^2 + u^{2\beta+ 5/2}),
\end{equation}
and the right-hand side is integrable for $\beta \in (-1, 1)$. Thus, the uniform integrability condition is satisfied
\begin{equation}
\lim_{C \rightarrow \infty} \sup_{\varepsilon \in [0, 1]} \int_{ | f^{\varepsilon} > C | } |f^\varepsilon| du ds = 0.
\end{equation}
Therefore, the Vitali convergence theorem gives $L^1$ convergence, namely for every $\omega>0$,
\begin{equation}
\overline{C}_\varepsilon (N^\varepsilon (s) ) (\omega) \rightarrow \overline{C} (N (s) ) (\omega) \text{ in } L^1 ((0, t), ds).
\end{equation}
In conclusion, for each $\omega>0$, $N$ satisfies
\begin{equation} \label{mildsoln}
N(t, \omega) = N(0, \omega) + \int_0 ^t \overline{C} (N (s) )(\omega) ds
\end{equation}
so $N$ is a mild solution. 

\medskip

\noindent \underline{Solution in the strong sense.} We prove that the solution is in fact strong. We have already seen that for every $\omega$, the integrand of $\overline{\mathcal{C}} (N)$ is integrable in $[0, T] \times [0,1]$. In particular, by Fubini's theorem, for each $\omega \in (0, \infty)$, and for almost $t \in [0, T]$, $\overline{\mathcal{C}} (N(t) ) (\omega)$ is well-defined and is an integrable (and therefore measurable) function of $t \in [0, T]$. Therefore, by Lebesgue differentiation theorem, $\partial_t N (t, \omega) = \lim_{h \rightarrow 0} \frac{1}{h} \int_{t} ^{t+h} \overline{C} (N (s) )(\omega) ds$ exists and equals $\overline{C} (N(t) ) (\omega)$ for almost every $t \in [0, T]$. 

\medskip

\noindent \underline{Uniqueness of the solution.} Finally, we conclude that the solution to the Cauchy problem is unique, by establishing a stability estimate. Suppose that $\beta, p_0$ satisfies \eqref{betaanddata}, $N, M \in L^\infty (0, T; X)$ be two strong solutions of \eqref{Neqn} with initial data $N_0, M_0 \in X$ respectively, and $N_0 - M_0 \in L^{2p} ((0, \infty))$ for some positive integer $p \ge p_0$. If we let $\delta N = N - M$, we have
\begin{equation} \label{difference}
\begin{split}
\partial_t \delta N &= \int _0 ^1 W(u) \sum_{k=1} ^4 (-1)^k v_k ^{2\beta - \frac{1}{2} } \times \\
&\left [ (v_2 ^{2\beta + 3/2} - v_1^{2\beta + 3/2} ) \left ( (\delta N)_{2,k} N_{3,k} N_{4,k} + M_{2,k} (\delta N)_{3,k} N_{4,k} + M_{2,k} M_{3,k} (\delta N)_{4,k} \right ) \right. \\
&+ v_2^{2\beta + 3/2}  ( (\delta N)_{1,k}  -(\delta N)_{2,k} ) N_{3,k} N_{4,k} ) \\
&+ v_2^{2\beta + 3/2} (M_{1,k} - M_{2,k} ) \left ( (\delta N)_{3,k} N_{4,k} + M_{3,k} (\delta N)_{4,k} \right ) \\
&+ (v_4 ^{2\beta + 3/2} - v_3^{2\beta + 3/2} ) \left ( (\delta N)_{1,k} N_{2,k} N_{4,k} + M_{1,k} (\delta N)_{2,k} N_{4,k} + M_{1,k} M_{2,k} (\delta N)_{4,k} \right ) \\
&+ v_4^{2\beta + 3/2} ( (\delta N)_{3,k} - (\delta N)_{4,k} ) N_{1,k} N_{2,k} \\
&\left.+ v_4^{2\beta + 3/2} (M_{3,k} - M_{4,k} ) \left ( (\delta N)_{1,k} N_{2,k} + M_{1,k} (\delta N)_{2,k} \right ) \right ] du \\
&=: J_1 + J_2 + J_3 + J_4 + J_5 + J_6.
\end{split}
\end{equation}
Then we multiply $(\delta N)^{2p-1}$ to \eqref{difference} and integrate over $\omega$ to measure $\frac{1}{2p} \frac{d}{dt} \int_0 ^\infty (\delta N)^{2p} (\omega) d\omega$. The calculation proceeds exactly same as in Section \ref{Lipschitz}: for $\ell = 1,3,4,6$, 
\begin{equation}
\int_0 ^\infty (\delta N)^{2p-1} J_\ell d\omega \le \| \delta N \|_{L^{2p} (0, \infty) }^{2p-1} \| J_{\ell} \|_{L^{2p} (0,\infty)}
\end{equation}
and from \eqref{visizes}, \eqref{weight} and \eqref{vdiffsize} and by Minkowski integral inequality, we have
\begin{equation}
\begin{split}
&\int_0 ^\infty (\delta N)^{2p-1} (J_1 + J_4) d\omega \\
&\lesssim \| \delta N\|_{L^{2p}} ^{2p} (\| N \|_{L^\infty} + \| M \|_{L^\infty} )^2 \times \\
&  \int_0^1 u^{-2\beta -1} \sum_{k=1} ^4 v_k^{2\beta - 1/2} \left [u^2  (v_{k,2}^{1/2p} + v_{k,3}^{1/2p} + v_{k,4}^{1/2p} ) + u^{2\beta+5/2} (v_{k,1}^{1/2p} + v_{k,2}^{1/2p} + v_{k,4}^{1/2p} ) \right ] du \\
&\lesssim \| \delta N\|_{L^{2p}} ^{2p} (\| N \|_{L^\infty} + \| M \|_{L^\infty} )^2 \int_0 ^1 (u^{-2\beta+1} + u^{3/2} )(1+u^{-1/2p} ) + (u^{1/2} + u^{2\beta+1} )(1+u^{1/2p} ) du \\
&\lesssim \| \delta N\|_{L^{2p}} ^{2p} (\| N \|_{L^\infty} + \| M \|_{L^\infty} )^2
\end{split}
\end{equation}
by the conditions on $p, p_0, \beta$. Similarly, we have
\begin{equation}
\begin{split}
&\int_0 ^\infty (\delta N)^{2p-1} (J_3 + J_6) d\omega \lesssim \| \delta N \|_{L^{2p }}^{2p} ( \| N \|_{L^\infty} + \| M \|_{L^\infty} ) \| DM \|_{L^\infty} \times \\
& \int_0 ^1 u^{1-2\beta - 1/2p} + u^{3/2} + u^{1/2} + u^{1+2\beta +1/2p } du \lesssim  \| \delta N \|_{L^{2p }}^{2p} ( \| N \|_{L^\infty} + \| M \|_{L^\infty} ) \| DM \|_{L^\infty}.
\end{split}
\end{equation}
The estimates on $I_2$ and $I_5$ in Section \ref{Lipschitz}, mutatis mutandis, give the estimate on $J_2$ and $J_5$: we just replace $DN$ by $\delta N$ and we omit the details. To summarize, we have
\begin{equation}
\frac{d}{dt} \| \delta N \|_{L^{2p}} \lesssim \| \delta N \|_{L^{2p} } ( \| N \|_{L^\infty (0, T; X) } + \| M \|_{L^\infty (0, T; X) } )^2,
\end{equation}
which gives the uniqueness.
%\bibliographystyle{abbrv}
%\bibliography{main}

\end{document}